\documentclass[11pt,a4paper]{amsart}
\usepackage{fullpage}
\usepackage{bbm,color}
\usepackage{amssymb,amscd,hyperref}
\usepackage[all]{xy}
\newtheorem{thm}{Theorem}[section]
\newtheorem{lem}[thm]{Lemma}
\newtheorem{prop}[thm]{Proposition}
\newtheorem{cor}[thm]{Corollary}
\theoremstyle{definition}
\newtheorem{rem}[thm]{Remark}
\newtheorem{defn}[thm]{Definition}

\def\Z{\mathbb{Z}}
\def\Q{\mathbb{Q}}

\def\HR{H\! R}
\def\ev0{\mathit{Ev}_0}

\def\leq{\leqslant}
\def\geq{\geqslant}

\def\ra{\rightarrow}

\def\Sp{\text{$\sf{Sp}$}}
\def\Sym{\text{$\sf{Sym}$}}

\def\smod{\text{$s\sf{mod}$}}

\def\sab{\text{$\sf{sAb}$}}
\def\chab{\text{$\sf{ch}$}}
\def\Chab{\text{$\sf{Ch}$}}
\def\sigmadg{\text{$\ch^\Sigma$}}
\def\sigmas{\text{$\sab^\Sigma$}}

\def\comsigmadg{C(\sigmadg)}
\def\comsigmas{C(\sigmas)}

\DeclareMathOperator{\colim}{colim}
\def\ie{\emph{i.e.}}

\def\id{\mathrm{id}}

\newcommand{\spec}{\mathrm{Sp}^{\Sigma}}

\newcommand{\Zt}{\widetilde{\mathbb{Z}}}
\newcommand{\CH}{\Chab}
\newcommand{\ch}{\chab}

\newcommand{\sm }{\wedge}

\newcommand{\iso}{\cong}
\newcommand{\sset}{\mathcal{S}}
\newcommand{\boxprod}{\Box}

\newcommand{\bS}{\mathbb{S}}

\newcommand{\cO}{\mathcal{O}}

 \long\def\bltext#1{{\color{blue}#1}}

\long\def\bltext#1{{#1}}   


\begin{document}
\title{
An algebraic model for commutative $H\Z$-algebras}
\author{Birgit Richter}
\address{Fachbereich Mathematik der Universit\"at Hamburg,
Bundesstra{\ss}e 55, 20146 Hamburg, Germany}
\email{richter@math.uni-hamburg.de}
\urladdr{http://www.math.uni-hamburg.de/home/richter/}

\author{Brooke Shipley}
\address{Department of Mathematics, Statistics, and Computer Science,
University of Illinois at Chicago, 508 SEO M/C, 851 S. Morgan Street,
Chicago, IL, 60607-7045, USA}
\email{bshipley@math.uic.edu}
\urladdr{http://homepages.math.uic.edu/~bshipley/}

\date{\today}
\keywords{Eilenberg-Mac\,Lane spectra, symmetric spectra,
  $E_\infty$-differential graded algebras, Dold-Kan correspondence}
\subjclass[2000]{Primary 55P43 }

\begin{abstract}
We show that the homotopy category of commutative algebra spectra over
the Eilenberg-Mac\,Lane spectrum of {an arbitrary commutative
  ring $R$} 
is equivalent to the
homotopy category of $E_\infty$-monoids in unbounded chain
complexes {over $R$}. We do this by establishing a chain of
Quillen equivalences 
between the corresponding model categories. We also provide a Quillen
equivalence to commutative monoids in the category of functors
from the category of finite sets and injections to unbounded chain
complexes.
\end{abstract}
\maketitle
\section{Introduction}
{Let $R$ be an arbitrary commutative ring.} In \cite{s-alg} it
was shown that the model category of algebra 
spectra over the 
Eilenberg-Mac\,Lane spectrum, 
{$\HR$},  is
connected to the
model category of differential graded {$R$-algebras} 
via a chain of Quillen
equivalences. In this paper we extend this result to the case of
commutative 
{$\HR$-algebra } spectra. 
As a guiding example we consider the function spectrum from a space
$X$ to the Eilenberg-Mac\,Lane spectrum of a commutative ring $R$,
$F(X,\HR)$. As $R$ is commutative, $F(X,\HR)$ is a commutative
$\HR$-algebra spectrum whose homotopy groups are the cohomology groups
of the space $X$ with coefficients in $R$:
$$ \pi_{-n}F(X,\HR) \cong H^n(X;R).$$
The singular cochains on $X$ with coefficients in $R$, $S^*(X;R)$, give
a chain model of the cohomology of $X$ by regrading. We set
$$ C_{-*}(X;R) := S^*(X;R).$$
{Note that for $R=\mathbb{F}_p$ the Steenrod operations on
  $H^*(X;R)$ can be constructed from the $\cup_i$-products. These are
  chain homotopies that measure the failure of the cup-product to
  produce a strictly graded commutative product of cochains. Thus, in general,
  one cannot expect to find a model of the singular cochains of a
  space that is a differential graded commutative $R$-algebra. Instead, one
  must work with $E_\infty$-algebra structures. \bltext{See
    also~\cite[Theorem 2]{cenkl}.} A
  notable exception are rational cochains of a space with the Sullivan
cochains as a strictly differential graded commutative model. }

We establish a chain of Quillen equivalences between commutative
{$\HR$}
-algebra
spectra, 
{$C(\HR\text{-mod})$}, and differential graded
{$E_\infty$-$R$-algebras}, 
\bltext{$E_\infty\Chab_R$}:
{$$ \xymatrix{
{C(\HR\text{-mod})} \ar@<0.5ex>[r]^Z  &
\ar@<0.5ex>[l]^U {C(\Sp^\Sigma(\bltext{\smod_R}))}
\ar@<-0.5ex>[r]_{\Phi^*N}   &
\ar@<-0.5ex>[l]_{L_N} {C(\Sp^\Sigma(\chab_R))}  \ar@<0.5ex>[r]^i  &
\ar@<0.5ex>[l]^{C_0} {C(\Sp^\Sigma(\Chab_R))}\ar@<0.5ex>[d]^{R_\varepsilon}
\\
 & &
{E_\infty\Chab_R}  \ar@<0.5ex>[r]^>>>>>>>{F_0}
&{E_\infty(\Sp^\Sigma(\Chab_R))} \ar@<0.5ex>[l]^>>>>>>{\ev0}
\ar@<0.5ex>[u]^{L_\varepsilon}
}$$}
Here, our intermediary categories include symmetric spectra
($\Sp^\Sigma$) over the categories of 
{simplicial $R$-modules,\bltext{ ($\smod_R$), } 
non-negatively graded chain complexes \bltext{over $R$}, \bltext{($\ch_R$)}, and
unbounded  chain complexes over $R$, \bltext{($\CH_R$)}.   The functors
  will be introduced in the sections below. 

The fact that there is such a Quillen equivalence should not be surprising, but 
to our knowledge, this result cannot be found in the literature.  

In the context of infinite loop space theory, $E_\infty$-ring spectra,
and their units, the theory of $\mathcal{I}$-spaces is important; see
\cite{sasch}. Here
$\mathcal{I}$ is the category of finite sets and injections and
$\mathcal{I}$-spaces are functors from $\mathcal{I}$ to simplicial
sets. More generally, functor categories from $\mathcal{I}$ to
categories of modules feature as $\mathit{FI}$-modules in the work of
Church, Ellenberg, Farb \cite{CEF} and others. We relate symmetric
spectra in unbounded chain complexes over $R$} via a chain of Quillen
equivalences to the category of unbounded $\mathcal{I}$-chain
complexes and prove that commutative monoids in this category,
$C(\Chab_R^\mathcal{I})$,  provide
an alternative model for commutative $HR$-algebra spectra.
In fact,
there is a chain of Quillen equivalences between 
{$C(\HR\text{-mod})$} and $E_\infty$-monoids in unbounded
  $\mathcal{I}$-chain complexes {over $R$},
  $E_\infty(\CH_R^\mathcal{I})$, that passes via 
$E_\infty(\Sp^\Sigma(\Chab_R))$ and $E_\infty\Chab_R$. The
rigidification result of Pavlov and Scholbach \cite[3.4.4]{ps2} for symmetric
spectra implies that the model category $E_\infty(\Chab_R^\mathcal{I})$ is
Quillen equivalent to the one of commutative monoids in
$\Chab_R^\mathcal{I}$, that is $C(\Chab_R^\mathcal{I})$. Taking these results
together we obtain a chain of Quillen equivalences between commutative
{$\HR$-algebra spectra and commutative monoids in
  $\mathcal{I}$-chain complexes over $R$.}
See Theorem~\ref{comm.hz.I.ch}. We expect that our comparison result 
makes it possible to find explicit commutative $\mathcal{I}$-chain
models for certain commutative 
{$\HR$-algebras}
and there is ongoing work on this by Richter, Sagave
and Schulz with applications to logarithmic structures on commutative
ring spectra in mind. 

If \bltext{$R= \Q$} is the field of 
rational numbers we can prolong our
chain of Quillen equivalences and obtain a comparison (Corollary
\ref{cor:rat}) between
commutative $H\Q$-algebra spectra and differential graded commutative
$\Q$-algebras.

Mike Mandell showed in \cite[7.11]{man03} that for every commutative
ring $R$ the homotopy categories of $E_\infty$-$\HR$-algebra spectra
and of $E_\infty$ monoids in the category of unbounded $R$-chain
complexes are equivalent. He also claims in \emph{loc.\,cit.}~that he
can improve this equivalence of homotopy categories to an actual chain
of Quillen equivalences. He suggests using the methods of
\cite{schsh}, but only associative monoids are treated there. 

Our approach is different from Mandell's
because we work in the setting of symmetric spectra. The idea to
integrate the symmetric groups into the monoidal structure to
construct a symmetric monoidal category of spectra is due to
Jeff Smith. Our arguments heavily rely on combinatorial and monoidal
features of the category of symmetric spectra in the categories of
simplicial sets, simplicial {$R$-modules,} 
non-negatively graded chain complexes ($\ch_R$) and unbounded chain
complexes ($\CH_R$).

The structure of the paper is as follows: We recall some basic facts
and some model categorical features of symmetric spectra in section
\ref{sec:background}. In section \ref{sec:cm} we recall results from
Pavlov and Scholbach \cite{ps1,ps2} that establish model
structures on commutative ring spectra in the cases that arise as
intermediate steps in our chain of Quillen equivalences and we also
recall their rigidification result. We sketch how to use
methods from Chadwick-Mandell \cite{cm} for an alternative proof. The Quillen
equivalence between commutative 
{$\HR$}-algebra spectra and commutative
symmetric ring spectra in simplicial 
{$R$-modules} can be found in
section \ref{sec:hz} as Theorem \ref{thm:hzvssab}. The Quillen
equivalence between the latter model category and commutative
symmetric ring spectra in non-negatively graded chain complexes is based on
the Dold-Kan correspondence and is stated as Theorem
\ref{thm:qecommspectrasabchab} in section \ref{sec:dk}. There is a
natural inclusion functor $i \colon \chab \ra \CH$ and the Quillen
equivalence between commutative symmetric ring spectra in $\chab$ and
in $\CH$ (see corollary \ref{cor:chCH}) is based on this functor. In
section \ref{sec:einfty} we establish a Quillen equivalence between
$E_\infty$-monoids in symmetric spectra in unbounded chain complexes
and $E_\infty$-monoids in unbounded 
{chain complexes.} The link with
$E_\infty$-monoids and commutative monoids in the diagram category
of chain complexes  {indexed}
by the category of finite sets and injections is worked out in section
\ref{sec:ichains}.

\textbf{Acknowledgement:} This material is based upon work supported by
the National Science Foundation  under Grant No. 0932078000 while the
authors were in
residence at the Mathematical Sciences Research Institute in Berkeley
California, during the Spring 2014 program on Algebraic Topology.
The second author was also supported during this project by the NSF
under Grants No. 1104396 and 1406468.  We
are grateful to Dmitri Pavlov and Jakob Scholbach for sharing draft
versions of \cite{ps1,ps2} with us. We thank Benjamin Antieau and
Steffen Sagave for helpful comments on an earlier version of this paper.

\section{Background} \label{sec:background}
In the following we will consider model category structures that are
transferred by an adjunction. Given an adjunction
$$ \xymatrix@1{ \mathcal{C} \ar@<0.5ex>[r]^{L} &   \ar@<0.5ex>[l]^{R}
\mathcal{D}} $$
where $\mathcal{C}$ is a model category and $\mathcal{D}$ is a
bicomplete category,
we call a model structure on $\mathcal{D}$ \emph{right-induced} if the weak
equivalences and fibrations in $\mathcal{D}$ are determined by the right
adjoint functor $R$.

We use the general setting of symmetric spectra as in \cite{H}. Let
$(\mathcal{C}, \otimes, \mathbf{1})$ be a bicomplete closed symmetric
monoidal category
and let $K$ be an object of $\mathcal{C}$. A symmetric sequence in
$\mathcal{C}$ is a family of objects $X(n) \in \mathcal{C}$ with $n
\in \mathbb{N}_0$ such that the $n$th level $X(n)$ carries an action
of the symmetric group $\Sigma_n$. Symmetric sequences form a category
$\mathcal{C}^\Sigma$ whose morphisms are given by families of
$\Sigma_n$-equivariant morphisms $f(n)$, $n\geq 0$ . For every $r \geq 0$
there is a functor
$G_r\colon \mathcal{C} \ra \mathcal{C}^\Sigma$ with
$$G_r(C)(n) = \begin{cases} \Sigma_n \times C, & \text{ for } n=r, \\
\varnothing, & n \neq r, \end{cases}$$
where $\varnothing$ denotes the initial object of $\mathcal{C}$.
 Here $\Sigma_n
\times C = \bigsqcup_{\Sigma_n} C$ carries the $\Sigma_n$-action that
permutes the summands.

We consider the symmetric
sequence $\Sym(K)$ whose $n$th level is $K^{\otimes n}$. {Here
  we follow the usual convention that $K^{\otimes 0}$ is the unit
  $\mathbf{1}$.}  
The category $\mathcal{C}^\Sigma$ inherits a symmetric monoidal
structure from $\mathcal{C}$: for $X,Y \in \mathcal{C}^\Sigma$ we set
$$ (X \odot Y)(n) = \bigsqcup_{p+q=n} \Sigma_n \times_{\Sigma_p \times
\Sigma_q} X(p) \otimes Y(q).$$
{It is straightforward to show (see for instance \cite[\S
  7]{H})  that $\Sym(K)$ is a
  commutative monoid in $\mathcal{C}^\Sigma$.}

The category of \emph{symmetric spectra} (in $\mathcal{C}$ with respect to $K$),
$\Sp^\Sigma(\mathcal{C}, K)$, is the category of right $\Sym(K)$-modules in
$\mathcal{C}^\Sigma$. Explicitly, a symmetric spectrum is a family of
$\Sigma_n$-objects $X(n) \in \mathcal{C}$ together with
$\Sigma_n$-equivariant maps
$$ X(n) \otimes K \ra X(n+1)$$
for all $n \geq 0$ such that the composites
$$ X(n) \otimes K^{\otimes p} \ra X(n+1) \otimes K^{\otimes p-1} \ra
\ldots \ra X(n+p)$$
are $\Sigma_n \times \Sigma_p$-equivariant for all $n,p\geq 0$. Morphisms in
$\Sp^\Sigma(\mathcal{C}, K)$ are morphisms of symmetric sequences that
are compatible with the right $\Sym(K)$-module structure.

There is an evaluation functor $\mathit{Ev}_n$ that maps an $X \in
\Sp^\Sigma(\mathcal{C}, K)$ to $X(n) \in \mathcal{C}$. This functor
has a left adjoint,
$$F_n \colon \mathcal{C} \ra \Sp^\Sigma(\mathcal{C}, K)$$
such that $F_n(C)(m)$ is the initial object for $m < n$ and
$$ F_n(C)(m) \cong \Sigma_m \times_{\Sigma_{m-n}} C \otimes K^{\otimes
  m-n}, \text{ if } m \geq n.$$
Note that $F_n(C) \cong G_n(C) \odot \Sym(K)$.

Symmetric spectra form a symmetric monoidal category
$(\Sp^\Sigma(\mathcal{C},K), \wedge, \Sym(K))$ such that for $X,Y \in
\Sp^\Sigma(\mathcal{C},K)$
$$ X\wedge Y = X \odot_{\Sym(K)} Y.$$
Here {$X \odot_{\Sym(K)} Y$ denotes the coequalizer of the
  diagram $\xymatrix@1{X \odot {\Sym(K)} \odot Y \ar@<-0.5ex>[r]
    \ar@<0.5ex>[r] & X \odot  Y}$ where we use the right action of
  $\Sym(K)$ on $X$ and} we use the right action of $\Sym(K)$ on $Y$
after applying the 
twist-map in the symmetric monoidal structure on
$\mathcal{C}^\Sigma$.

A crucial map is
\begin{equation} \label{eq:lambda}
\lambda\colon F_1K \ra F_0\mathbf{1};
\end{equation}
it is given as the adjoint to the identity map $K \ra
\mathit{Ev}_1F_0\mathbf{1} = K$.

We recall the basics about model category structures on symmetric
spectra from \cite{H}: If $\mathcal{C}$ is a closed symmetric monoidal
model category which is left proper and cellular and if $K$ is a
cofibrant object of $\mathcal{C}$, then there is a
\emph{projective model structure on the category
$\Sp^\Sigma(\mathcal{C},K)$} \cite[8.2]{H},
$\Sp^\Sigma(\mathcal{C},K)_{\text{proj}}$,
 such that the fibrations and weak equivalences are levelwise
fibrations and weak equivalences in $\mathcal{C}$ and such that the
cofibrations are determined by the left lifting property with respect
to the class of acyclic fibrations.

This model structure has a Bousfield localization with respect to the
set of maps
$$\{ \zeta^{QC}_n\colon F_{n+1}(QC \otimes K) \ra F_n(QC), n \geq 0\}$$
where $Q(-)$ is a cofibrant replacement and $C$ runs through the
domains and codomains of the generating cofibrations of
$\mathcal{C}$. The map $\zeta^{QC}_n$ is adjoint to the inclusion map
into the component of $F_n(QC)(n+1)$ corresponding to the identity
permutation. We call the Bousfield localization of
$\Sp^\Sigma(\mathcal{C},K)_{\text{proj}}$ at this set of maps the
\emph{stable model structure on $\Sp^\Sigma(\mathcal{C},K)$} and
denote it by $\Sp^\Sigma(\mathcal{C},K)^s$. 

As we are interested in commutative monoids in symmetric spectra, we
use positive variants of the above mentioned model structures: Let
$\Sp^\Sigma(\mathcal{C},K)^+_{\text{proj}}$ be the model structure
where fibrations are maps that are fibrations in each level $n \geq 1$
and weak equivalences are levelwise weak equivalences for positive
levels. The cofibrations are again determined by their lifting
property and they turn out to be isomorphisms in level zero (compare \cite[\S
14]{mmss}).  By adapting the localizing set and considering only
positive $n$, we get the positive stable model structure on
$\Sp^\Sigma(\mathcal{C},K)$ and we denote it by
$\Sp^\Sigma(\mathcal{C},K)^{s,+}$. 

{
\begin{rem}
We consider several examples of categories $\mathcal{C}$ with
different choices of objects $K \in \mathcal{C}$. Despite the name,
the stable model structure on $\Sp^\Sigma(\mathcal{C},K)$ does not
have to define a stable model category in the sense that the category
is pointed with a homotopy category that carries an invertible
suspension functor. Proposition \ref{prop:ci} for instance makes this
explicit \bltext{in the case when $K$ is the unit of the symmetric monodical structure on $\mathcal{C}$.}
\end{rem}
}

\section{Model structures on algebras over an operad
over {$\Sp^\Sigma(\mathcal{C})$ for $\mathcal{C} = \chab, \sab,
\CH$}} \label{sec:cm}

{From now on we restrict to the case $R=\Z$ in order to ease
  notation. The proofs work in general. }

Establishing right-induced model structures for commutative monoids in model
categories is hard. Sometimes it is not possible, for instance there
is no right-induced model structure on differential graded commutative 
rings, because the free functor does not respect acyclicity.
However, if the underlying model category is nice enough, then such
model structures can be established. In broader generality, one might
ask whether algebras over operads possess a right-induced model
structure. In our setting we will apply the results of Pavlov and
Scholbach. They show in 
{\cite[5.10]{ps1}} 
and \cite[3.4.1]{ps2} that
for a tractable, pretty small, left proper, h-monoidal, flat symmetric
monoidal model category $\mathcal{C}$ the category of $\mathcal{O}$-algebras in
$\Sp^\Sigma(\mathcal{C},K)^{s,+}$ has a right-induced model
structure. Here $\mathcal{O}$ is an operad {in
  $\mathcal{C}$}. See {\em loc.~cit.} for an 
explanation of the assumptions. These conditions are satisfied for the
model categories of
simplicial abelian groups and both non-negatively graded and unbounded chain
complexes. {Hence, using their results, we obtain: 
\begin{thm} \label{thm:existence-ps}
The category of $\mathcal{O}$-algebras in
$\Sp^\Sigma(\mathcal{C},K)^{s,+}$ has a right-induced model
structure for $\mathcal{C} = \sab, \chab, \CH$, any $K$ and any operad
$\cO$ in $\mathcal{C}$.
\end{thm}}

{We follow the convention that an $E_\infty$-operad
  $\mathcal{P}$ in $\CH$ (or $\chab, \sab$) is a symmetric unital operad whose
  augmentation induces a weak equivalence to the operad that describes
commutative monoids. For sake of brevity we call algebras over an
$E_\infty$-operad \emph{$E_\infty$-monoids}.}  
Pavlov-Scholbach also prove a rigidification
theorem  
{\cite[7.5]{ps1}},  
\cite[3.4.4]{ps2}). We apply this to the
case of $E_\infty$-monoids and in this case it provides a Quillen
equivalence between the model category of $E_\infty$-monoids in
$\Sp^\Sigma(\mathcal{C},K)^{s,+}$ and commutative monoids in
$\Sp^\Sigma(\mathcal{C},K)^{s,+}$. {Related rectification
  results in the setting 
of spaces instead of chain complexes are due to \cite{gh} and
\cite{sasch}. Berger and Moerdijk obtain general results about
rectifications of homotopy algebra structures in \cite{bm2}.}

Other approaches to model structures for commutative monoids in
symmetric spectra and rigidification results can be found for instance
in work by
John Harper \cite{Har}, David White \cite{W}, and  Steven Chadwick and
Michael Mandell \cite{cm}.


In the following we sketch {an alternative proof of the
  existence of a positive stable right-induced model structure for the
category of symmetric spectra in the 
category of unbounded chain complexes, $\Sp^\Sigma(\CH, \Z[1])$, where
$\Z[1]$ denotes the chain
complex which is concentrated in chain degree one with chain group
$\Z$. This proof uses  a modification of} 
the methods used  by Chadwick-Mandell \cite{cm}.  
 A similar proof works for the categories  of
symmetric spectra in simplicial abelian groups, $\Sp^\Sigma(\sab,
\tilde{\Z}({\mathbb{S}}^1))$, with $K=
\tilde{\Z}({\mathbb{S}}^1)$ the reduced free abelian simplicial group generated
by the simplicial $1$-sphere, and for symmetric spectra in the
category of non-negatively graded chain
complexes,  $\Sp^\Sigma(\ch, \Z[1])$. 

{A reader who is just interested in the application of these
  results is invited to resume reading in section \ref{sec:hz}.} 

\begin{thm}\label{thm.op.models}
Let $\cO$ be an operad {in $\CH$}.  Then the category
$\cO(\Sp^\Sigma(\CH))$ of 
$\cO$-algebras over $\Sp^\Sigma(\CH)$ is a model category with fibrations and
weak equivalences created in the positive stable model structure on
$\Sp^\Sigma(\CH)$.
 \end{thm}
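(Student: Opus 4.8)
The plan is to obtain the model structure by \emph{right-induction} along the free--forgetful adjunction between $\Sp^\Sigma(\CH)^{s,+}$ and $\cO(\Sp^\Sigma(\CH))$, whose left adjoint is the free $\cO$-algebra functor
\[
\mathbb{O}(X) \;=\; \bigsqcup_{n\geq 0}\ \cO(n)\wedge_{\Sigma_n} X^{\wedge n}
\]
and whose right adjoint is the forgetful functor $U$. The category of $\cO$-algebras is bicomplete (indeed locally presentable), and $U$ preserves filtered colimits. Since the positive stable model structure on $\Sp^\Sigma(\CH)$ is combinatorial, the smallness conditions needed to run the small object argument are automatic, so --- writing $I$ and $J$ for sets of generating cofibrations and generating acyclic cofibrations --- the standard right-induction criterion reduces the existence of the desired model structure to the \emph{acyclicity condition}: every relative $\mathbb{O}J$-cell complex (a transfinite composite of pushouts, taken in $\cO$-algebras, of maps $\mathbb{O}(j)$ with $j\in J$) has underlying map a positive stable equivalence. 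As positive stable equivalences are closed under transfinite composition and $U$ preserves filtered colimits --- so that $U$ of a transfinite composite is the transfinite composite of the underlying maps --- it suffices to treat one free extension at a time: for every $\cO$-algebra $A$ and every acyclic cofibration $j\colon K\to L$ in $\Sp^\Sigma(\CH)^{s,+}$, the canonical map $A\to A\amalg_{\mathbb{O}K}\mathbb{O}L$ should be a positive stable equivalence of underlying symmetric spectra.

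At this point I would follow the strategy of Chadwick--Mandell \cite{cm} and use the standard filtration of a free operadic extension (see Harper \cite{Har}, following Elmendorf--Mandell): the underlying symmetric spectrum of $A\amalg_{\mathbb{O}K}\mathbb{O}L$ is the colimit of a sequence $A = A_0\to A_1\to A_2\to\cdots$ in which each map $A_{n-1}\to A_n$ is a pushout (in $\Sp^\Sigma(\CH)$) of a map of the form $\mathcal{E}_n(A)\wedge_{\Sigma_n} j^{\boxprod n}$, where $j^{\boxprod n}\colon Q_n\to L^{\wedge n}$ is the $n$-fold iterated pushout--product of $j$ and $\mathcal{E}_n(A)$ is a $\Sigma_n$-object built functorially from $\cO$ and $A$. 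The ingredients I then need are: (1) the pushout--product axiom for $\Sp^\Sigma(\CH)^{s,+}$ (see \cite{H}), which gives that $j^{\boxprod n}$ is again a positive acyclic cofibration; and (2) that the positive structure is sufficiently well-behaved under $\Sigma_n$-orbits --- concretely, that for a positive acyclic cofibration $j$ the map $j^{\boxprod n}$ is an $h$-cofibration on which $(-)\wedge_{\Sigma_n}M$ yields a positive stable equivalence for every $\Sigma_n$-object $M$. Granting (1) and (2), each $A_{n-1}\to A_n$ is a pushout of a positive stable acyclic $h$-cofibration, hence --- by the gluing lemma for $h$-cofibrations, using that $\Sp^\Sigma(\CH)^{s,+}$ is left proper --- is itself a positive stable equivalence, and transfinite composition finishes the reduction. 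The same reasoning applies verbatim to $\Sp^\Sigma(\sab)$ and $\Sp^\Sigma(\ch)$, once the corresponding pushout--product and orbit statements are in hand.

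The hard part is ingredient (2): controlling the $\Sigma_n$-equivariant homotopy type of the filtration subquotients. This is exactly where the \emph{positive} stable model structure is indispensable, since positive cofibrations are isomorphisms in level zero and hence the $\Sigma_n$-actions on smash powers of positive cofibrant objects are free enough that underived and derived $\Sigma_n$-orbits agree, so acyclicity is not destroyed by $\mathcal{E}_n(A)\wedge_{\Sigma_n}(-)$ (compare the treatment of commutative monoids in \cite[\S 15]{mmss} and \cite{W}). Concretely I would deduce (2) from the fact that $\Sp^\Sigma(\CH)^{s,+}$ is flat and $h$-monoidal in the sense of Pavlov--Scholbach \cite{ps1,ps2} --- using that the projective model structure on unbounded $\Z$-chain complexes is flat and that $\Z[1]$ is cofibrant --- which also makes precise the assertion that the layer maps are $h$-cofibrations. (Alternatively, one can bypass the Chadwick--Mandell-style filtration argument entirely and apply \cite[9.2.11]{ps1} or \cite[3.4.1]{ps2} directly, since $\CH$, $\sab$ and $\ch$ are all tractable, pretty small, left proper, $h$-monoidal and flat.)
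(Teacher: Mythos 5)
Your proposal is correct and follows essentially the same route as the paper: your reduction (combinatoriality plus the free functor preserving filtered colimits, i.e.\ \cite[2.3]{ss1}) to showing that pushouts of maps in $\cO J$ are stable equivalences is exactly Lemma~\ref{lem.operad.pout}, and your filtration of a free operadic extension with orbit control coming from positive cofibrancy is the Chadwick--Mandell argument \cite[7.7--7.10]{cm} that the paper invokes, with your ingredient (2) playing the role of Proposition~\ref{prop.gen.15.5}(2) (the translation of \cite[15.5]{mmss}) together with the h-cofibration properties of Proposition~\ref{prop.h.list}. The only cosmetic differences are that the paper packages the $\Sigma_n$-orbit comparison as an explicit $E\Sigma_n$-statement rather than appealing to Pavlov--Scholbach flatness/h-monoidality (strictly one would need their \emph{symmetric} versions there, or simply cite \cite[9.2.11]{ps1}, \cite[3.4.1]{ps2} outright, as both you and the paper note), and that gluing along h-cofibrations needs no left properness.
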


\begin{thm}\label{thm.op.compare}
Let $\phi\colon \cO \to \cO'$ be a map of operads.  The induced adjoint functors
$$\xymatrix@1{ {\cO(\Sp^\Sigma(\CH))} \ar@<0.5ex>[r]^{L_{\phi}} &
  {\cO'(\Sp^\Sigma(\CH))} \ar@<0.5ex>[l]^{R_{\phi}}}$$
form a Quillen adjunction.  This is a Quillen equivalence if
$\phi(n)\colon \cO(n) \to \cO'(n)$ is a (non-equivariant) weak equivalence
for each $n$.

In particular, if $\varepsilon$ is the augmentation from any
$E_{\infty}$ operad to the commutative operad, 
then it induces a Quillen equivalence between the categories of
$E_{\infty}$ monoids and of commutative monoids in $\Sp^\Sigma(\CH)$. 
\end{thm}

The proofs of both of these theorems use the following statement,
which is a translation  of \cite[15.5]{mmss} to $\Sp^\Sigma(\CH)$ with a
slight generalization based on {\cite[Remark 8.3(i)]{cm}}. 
As a model for
$E\Sigma_n$ in the category $\Sp^\Sigma(\CH)$ we take $F_0$ applied to
the normalization of the free  simplicial abelian group generated by the
nerve of the translation category of the symmetric group $\Sigma_n$.

\begin{prop}\label{prop.gen.15.5} Let $X$ and $Z$ be objects in
  $\Sp^\Sigma(\CH)$.
\begin{enumerate}
\item
Let $K$ be a chain complex, assume $X$ has a $\Sigma_i$ action, and $n > 0$.
Then the quotient map $$q\colon E\Sigma_{i+} \sm_{\Sigma_i}(( F_nK)^{\sm i} \sm X)
\to (F_n K)^{\sm i} \sm X) / \Sigma_i$$
is a level homotopy equivalence.
\item
For any positive cofibrant object $X$ and any $\Sigma_i$-equivariant object $Z$,
$$ q\colon E\Sigma_{i+} \sm_{\Sigma_i} (Z \sm X^{\sm i}) \to (Z \sm
X^{\sm i}) / \Sigma_i$$ 
is a $\pi_*$-isomorphism.
\end{enumerate}
\end{prop}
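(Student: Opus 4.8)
The plan is to transport the proof of \cite[15.5]{mmss}, in the refined form used by Chadwick--Mandell \cite[7.3]{cm}, to $\Sp^\Sigma(\CH)$, with the first part supplying the geometric input for the second.

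For the first part I would argue one level at a time. Unraveling $(F_nK)^{\sm i}\sm X$ levelwise via the explicit formula for $F_n$ and the description of the smash product of symmetric spectra, one identifies $\bigl((F_nK)^{\sm i}\sm X\bigr)(m)$ with a direct sum of chain complexes indexed by the set $T_m$ of ordered $i$-tuples $(S_1,\dots,S_i)$ of pairwise disjoint $n$-element subsets of $\{1,\dots,m\}$: the summand at such a tuple is a tensor product of copies of $\Z[1]$ distributed over the $S_j$, with a level of $X$ tensored with further copies of $\Z[1]$, and $\Sigma_i$ permutes the tuples, acting by the induced signed permutation of the summands together with the given $\Sigma_i$-action on $X$. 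Since $n>0$ each $S_j$ is nonempty, so two tuples in one $\Sigma_i$-orbit agree only if related by the identity; hence $\Sigma_i$ acts freely on $T_m$, and choosing a tuple in each orbit exhibits $M_m:=\bigl((F_nK)^{\sm i}\sm X\bigr)(m)$ as a free $\Z[\Sigma_i]$-chain complex $\Z[\Sigma_i]\otimes D_m$ (a sign twist of a free module is still free). Writing $P$ for the normalized chains on the free abelian group on the nerve of the translation category of $\Sigma_i$, so that the chosen model of $E\Sigma_i$ is $F_0 P$ and $E\Sigma_{i+}\sm_{\Sigma_i}(-)$ is at level $m$ the functor $P\otimes_{\Z[\Sigma_i]}(-)$, the map $q$ at level $m$ becomes the augmentation $P\to\Z$ tensored with $\id_{D_m}$. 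As $P$ is a complex of free abelian groups quasi-isomorphic to $\Z$, this augmentation is a chain homotopy equivalence, hence so is $q$; naturality and compatibility with the structure maps being formal, $q$ is a level homotopy equivalence.

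For the second part, since $q$ is natural and $\pi_*$-isomorphisms are closed under retracts, one may assume $X$ is a cell complex in the positive stable model structure, built by transfinite composition from pushouts along generating cofibrations $F_n(c)\colon F_n(C)\to F_n(D)$ with $n\geq 1$ (where $c$ ranges over the generating cofibrations of $\CH$). Both functors $E\Sigma_{i+}\sm_{\Sigma_i}(Z\sm(-)^{\sm i})$ and $(Z\sm(-)^{\sm i})/\Sigma_i$ commute with filtered colimits, as does $\pi_*$ on $\Sp^\Sigma(\CH)$, so it suffices to check $q$ is a $\pi_*$-isomorphism at each stage of the cell filtration, by induction. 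For one pushout $X\to Y$ attaching a cell I would invoke the standard $\Sigma_i$-equivariant filtration $X^{\sm i}=Q_0\to\dots\to Q_i=Y^{\sm i}$ whose subquotients have the form $Q_j/Q_{j-1}\cong\Sigma_{i+}\sm_{\Sigma_j\times\Sigma_{i-j}}\bigl(c^{\boxprod j}\sm X^{\sm(i-j)}\bigr)$, where $c^{\boxprod j}$ is the $j$-fold iterated pushout-product of $c$; since $c$ is $F_n(c')$ with $n\geq 1$, the factor $c^{\boxprod j}$ lands in levels $\geq nj$ and carries the freeness exploited in the first part. Smashing with $Z$ and applying both functors turns this into a comparison of filtrations, and on each subquotient one is back in the situation of the first part — with the role played there by $X$ now played by $Z$ smashed with the relevant cells and remaining factors — so $q$ is a level homotopy equivalence, hence a $\pi_*$-isomorphism, there. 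Because both functors carry the (degreewise split) cofiber sequences of the filtration to cofiber sequences of symmetric spectra, the associated long exact sequences of homotopy groups together with an induction up the finite filtration give the claim for $Y$, completing the induction.

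The step I expect to be the main obstacle is the inductive one in the second part: setting up the $\Sigma_i$-equivariant filtration of the smash power of a pushout in $\Sp^\Sigma(\CH)$, identifying its subquotients precisely — including the induced representations $\Sigma_{i+}\sm_{\Sigma_j\times\Sigma_{i-j}}(-)$ and the Koszul signs — and checking that positivity is preserved so that the first part applies to each subquotient. Once that bookkeeping is in place, every comparison reduces to the freeness observation behind the first part, and the remainder is formal.
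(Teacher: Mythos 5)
Your proposal is correct and takes essentially the same route as the paper, which simply translates \cite[15.5]{mmss} in the refined form of \cite[7.3(i)]{cm} to $\Sp^\Sigma(\CH)$: your part (1) is the chain-level incarnation of the key point that for $m\geq ni$ the block $\Sigma_i$-action is free (so that $E\Sigma_i\times\Sigma_m\to\Sigma_m$ is a $(\Sigma_i\times\Sigma_{m-ni})$-equivariant homotopy equivalence), and your part (2) is the same cellular-filtration induction the paper invokes. The only slip is bookkeeping: the level $\bigl((F_nK)^{\sm i}\sm X\bigr)(m)$ is a direct sum indexed by injections $\{1,\dots,ni\}\hookrightarrow\{1,\dots,m\}$ (ordered tuples, not subsets) with summand $K^{\otimes i}\otimes X(m-ni)$ --- the copies of $\Z[1]$ are absorbed into the levels of $X$ and the factor $K^{\otimes i}$ must appear --- but the free-$\Z[\Sigma_i]$ and augmentation argument goes through verbatim with this corrected description.
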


\begin{proof}
First, the proof of \cite[15.5]{mmss}, easily translates to the
setting of $\Sp^\Sigma(\CH)$ from $\Sp^\Sigma(\sset)$ considered there.
The key point is that
if $q \geq ni$, then $E\Sigma_i \times \Sigma_q \to \Sigma_q$ is a
$(\Sigma_i \times \Sigma_{q-ni})$-equivariant homotopy equivalence.
As mentioned in {\cite[8.3(i)]{cm}}, 
the proof of the first statement in
\cite[15.5]{mmss} still works when $X$ has a $\Sigma_i$ action because
the $\Sigma_i$ action remains free on $\Sigma_q$  (or $\cO(q)$ in the
explicit case there.)  Similarly the second statement here follows by
the same cellular filtration of $X$ as in  \cite[15.5]{mmss}.
\end{proof}

The proofs of both of the theorems above also require the following
definition and statement of properties.

\begin{defn} A chain map $i\colon A \to B$ in $\CH$ is an {\em
    h-cofibration} if each homomorphism $i_n\colon  A_n \to B_n$ has a
  section (or splitting).   These are the cofibrations in a model
  structure on $\CH$;
  see~\cite[3.4]{christensen.hovey},~\cite[4.6.2]{schwanzl-vogt},
  or~\cite[18.3.1]{may-ponto}.  We say a map $i\colon X \to Y$ in
  $\Sp^\Sigma(\CH)$ is an {\em h-cofibration} if each level $i_n\colon X_n \to
  Y_n$ is an h-cofibration as a chain map.
\end{defn}

Below we refer to $\Sigma_n$-equivariant h-cofibrations.  These
are $\Sigma_n$-equivariant maps for which the underlying
non-equivariant map is an h-cofibration.  We use the following
properties of h-cofibrations below.

\begin{prop}\label{prop.h.list}
\begin{enumerate}
\item[]
\item The generating cofibrations and acyclic cofibrations, in $\CH$
are h-cofibrations.
\item Sequential colimits and pushouts preserve h-cofibrations.
\item If $f$ and $g$ are two h-cofibrations in $\CH$, then their
  pushout product $f \boxprod
  g$ is also an h-cofibration.
\item If $f$ is an h-cofibration in $\CH$, then $F_i f$ is an
  h-cofibration in $\Sp^\Sigma(\CH)$.
\item For any $\Sigma_n$-equivariant object $Z$,  subgroup $H$ of
  $\Sigma_n$,  $\Sigma_n$-equivariant h-cofibration $f$, and $i \geq
  n$, the map $Z \sm_{H} F_i (f)$ is an h-cofibration.
\end{enumerate}
\end{prop}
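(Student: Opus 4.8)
The plan is to verify each clause by unwinding the definition of h-cofibration—namely that each chain level admits a section—and then exploiting the fact that "having a section" is an extremely robust condition under the standard colimit and monoidal constructions in $\CH$.

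For (1), I would simply recall the explicit generating (acyclic) cofibrations of the projective model structure on $\CH$: the maps $S^{n-1} \to D^n$ (inclusion of the sphere chain complex into the disk chain complex) and $0 \to D^n$; in each chain degree these are split monomorphisms of free abelian groups, hence h-cofibrations. For (2), the key observation is that if $i\colon A \to B$ is an h-cofibration, a section of $i_n$ is a retraction in the category of abelian groups, and retractions are preserved by sequential colimits and by cobase change: given a pushout of $i$ along any map, one builds the required section levelwise from the section of $i$ together with the identity on the pushout corner, using that pushouts in $\CH$ are computed degreewise in $\Z$-modules. For (3), the pushout-product $f\boxprod g$ of two degreewise split monomorphisms is again degreewise a split monomorphism: in each chain degree $(f\boxprod g)_n$ is assembled from the pushout-product of split monos of abelian groups, and one checks directly that $A\otimes B' \cup_{A\otimes A'} B\otimes A' \to B\otimes B'$ splits because tensoring with a fixed abelian group preserves split exactness and the pushout corner is a summand of $B\otimes B'$ (choose compatible splittings $B\to A$, $B'\to A'$ and take their tensor product, then correct by projections). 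For (4), the functor $F_i\colon \CH \to \Sp^\Sigma(\CH)$ has the explicit formula $F_i(C)(m)\cong \Sigma_m\times_{\Sigma_{m-i}} C\otimes K^{\otimes m-i}$ for $m\geq i$ and $0$ otherwise; since $K=\Z[1]$ is a free chain complex concentrated in one degree, $F_i(f)(m)$ is, in each chain degree, a finite direct sum of copies of $f$ in various degrees tensored with free abelian groups, so a degreewise section of $f$ induces one on each level of $F_i(f)$.

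The main work—and the step I expect to be the real obstacle—is (5): showing that for a $\Sigma_n$-equivariant h-cofibration $f$, a $\Sigma_n$-object $Z$, a subgroup $H\leq\Sigma_n$, and $i\geq n$, the map $Z\sm_H F_i(f)$ is an h-cofibration. The subtlety is that the section of $f$ need not be $\Sigma_n$-equivariant, so after forming the orbit $(-)\sm_H(-)$ one cannot simply push the old section through. The plan is: first use (4) and the explicit level formula for $F_i$ to reduce to understanding, in each spectrum level $m$ and each chain degree, a map of the form $Z' \otimes_{H} (W \otimes f)$ where $W$ is a free $\Sigma_m$-module (a permutation module arising from $\Sigma_m\times_{\Sigma_{m-i}}(-)$ tensored with $K^{\otimes m-i}$) that is free enough—crucially $H\leq\Sigma_n\leq\Sigma_i\leq\Sigma_m$ acts freely on the relevant basis since $i\geq n$ and by the freeness input underlying Proposition \ref{prop.gen.15.5}. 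Because $H$ acts freely on a $\Z$-basis of $W$, the module $W$ is a free $\Z[H]$-module, and then $Z'\otimes_H(W\otimes f)$ is isomorphic (non-equivariantly, which is all we need) to a direct sum of copies of $Z'\otimes f$ indexed by $H$-orbit representatives of that basis. A (non-equivariant) section of $f$, tensored with the identity on $Z'$ and reassembled over the basis, then gives the required section. I would package the free-$\Z[H]$-module observation as the crux: once $W$ is free over $\Z[H]$, the functor $Z\otimes_H(W\otimes-)$ is (non-equivariantly) a direct sum of shifts of $Z'\otimes(-)$ and hence preserves split monomorphisms. The rest is bookkeeping over spectrum levels and chain degrees, combined with (2) to handle the colimits implicit in $\sm_H$ and in the bar-type construction of $Z$ when $Z$ is itself built cellularly.

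One should also remark that (5) is exactly the input needed later to run the small-object argument and the filtration-of-pushout analysis for $\cO$-algebras in Theorems \ref{thm.op.models} and \ref{thm.op.compare}: h-cofibrations are closed under the operations that appear when one attaches operadic cells, and they are contained in the levelwise cofibrations while being flexible enough to be preserved by the non-cofibrant orbit constructions $E\Sigma_{i+}\sm_{\Sigma_i}(-)$ that show up there. I would keep the proof of the proposition self-contained and purely at the level of split monomorphisms of abelian groups, deferring all homotopical content to the later sections where Proposition \ref{prop.gen.15.5} is invoked.
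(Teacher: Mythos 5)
Your proposal is correct, and it is worth noting that the paper itself states Proposition~\ref{prop.h.list} without any proof: the authors treat all five items as routine consequences of the definition of h-cofibration (degreewise split monomorphism), implicitly leaning on the references given with the definition (\cite{christensen.hovey}, \cite{schwanzl-vogt}, \cite{may-ponto}), where h-cofibrations are the cofibrations of a monoidal model structure on $\CH$ -- from that viewpoint (1)--(3) are formal closure properties of cofibrations, while (4) and (5) need the explicit level formula for $F_i$, exactly as you use it. Your elementary degreewise-splitting verification supplies what the paper leaves implicit: split monomorphisms of abelian groups are preserved by pushout, sequential composition (compose the chosen retractions), tensoring and direct sums, and $F_i(C)(m)\cong \Sigma_m\times_{\Sigma_{m-i}}C\otimes K^{\otimes m-i}$ exhibits $F_i(f)(m)$ in each chain degree as a direct sum of (shifted) copies of $f$. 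You also correctly isolate the one non-formal point, item (5): a splitting of $f$ need not be $H$-equivariant, and what rescues the statement is that for $i\geq n$ the subgroup $H\leq\Sigma_n\leq\Sigma_i$ acts freely on the coset basis of $\Sigma_m\times_{\Sigma_{m-i}}(-)$ (equivalently on the set of injections $\mathbf{i}\to\mathbf{m}$), so the relevant permutation module is free over $\Z[H]$, the $H$-orbit construction is non-equivariantly a direct sum indexed by orbit representatives, and a non-equivariant degreewise splitting of $f$, tensored with the levels of $Z$, descends to the quotient; this is the same freeness that drives Proposition~\ref{prop.gen.15.5}. Two small corrections to your write-up: the permutation module $\Z[\Sigma_m/\Sigma_{m-i}]$ is not free as a $\Sigma_m$-module, only as a $\Z[H]$-module, which is all you need and which you do state; and the closing appeal to item (2) and to a cellular structure on $Z$ in the proof of (5) is unnecessary, since the orbit-decomposition argument works for an arbitrary $\Sigma_n$-equivariant $Z$ because tensoring with the levels of $Z$ preserves split monomorphisms.
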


We write $\cO I$ and $\cO J$ for the sets of maps in $\cO(\Sp^\Sigma(\CH))$
obtained by applying the free $\cO$-algebra functor to the
generating cofibrations $I$ and
generating acyclic cofibrations $J$ from~\cite{s-alg}.
Since $\Sp^\Sigma(\CH)$ is a combinatorial model category and the free
functor $\cO$ commutes with filtered direct limits, to prove
Theorem~\ref{thm.op.models} it is enough to prove the following lemma
by~\cite[2.3]{ss1}.

\begin{lem}\label{lem.operad.pout}
Every sequential composition of pushouts  in $\cO(\Sp^\Sigma(\CH))$ of maps
in $\cO J$ is a stable equivalence.
\end{lem}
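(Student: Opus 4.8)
The plan is to analyze a single pushout in $\cO(\Sp^\Sigma(\CH))$ along a generating acyclic cofibration and to show it is a stable equivalence; a sequential composite of such maps is then a stable equivalence because stable equivalences are closed under sequential composition (filtered colimits are homotopy colimits in this setting, by the usual monomorphism/h-cofibration arguments). So fix a generating acyclic cofibration $j\colon F_i K \to F_i L$ from $J$ (more precisely, $j$ is $F_i$ applied to a generating acyclic cofibration of $\CH$, or one of the stabilization maps), let $A$ be an $\cO$-algebra, and consider the pushout
\begin{equation*}
\xymatrix{
\cO(F_i K) \ar[r] \ar[d] & A \ar[d] \\
\cO(F_i L) \ar[r] & B.
}
\end{equation*}
First I would invoke the standard filtration of the underlying object of $B$ by a sequence $A = B_0 \to B_1 \to B_2 \to \cdots$ with $B = \colim_q B_q$, where each $B_{q-1}\to B_q$ is a pushout (in the underlying category $\Sp^\Sigma(\CH)$) along a map of the form
\begin{equation*}
\cO(q) \sm_{\Sigma_q} \bigl(Q^{q-1}_{q-1}(j) \to (F_i L)^{\sm q}\bigr) \sm \text{(word involving } A\text{)},
\end{equation*}
i.e.\ along $\cO(q)\sm_{\Sigma_q}\bigl(A' \sm (F_i j)^{\boxprod q}\bigr)$ for a suitable $\Sigma_q$-object $A'$ built from $A$ and the attaching map; this is the May–Harper–Mandell pushout-filtration for algebras over an operad, and it holds verbatim in $\Sp^\Sigma(\CH)$.

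The heart of the argument is then to show each $B_{q-1}\to B_q$ is an acyclic h-cofibration, hence in particular a stable ($\pi_*$-)equivalence, and that the transfinite composite is too. That it is an h-cofibration follows from Proposition~\ref{prop.h.list}: $F_i j$ is an h-cofibration by (4), the iterated pushout product $(F_i j)^{\boxprod q}$ is an h-cofibration by (3), and smashing with a $\Sigma_q$-object and passing to $\Sigma_q$-orbits preserves this by (5), and pushouts preserve it by (2). To see it is a \emph{weak} equivalence is the crux: here I would replace the $\Sigma_q$-orbits $\cO(q)\sm_{\Sigma_q}(-)$ by the homotopy orbits $E\Sigma_{q+}\sm_{\Sigma_q}(\cO(q)\sm(-))$ at the price of a $\pi_*$-isomorphism, which is exactly what Proposition~\ref{prop.gen.15.5}(2) supplies, provided the relevant smash factor $(F_i L)^{\sm q}$ (or the relative term $Q^{q-1}_{q-1}$) is positive cofibrant — which it is, since $F_i$ with $i\geq 1$ lands in positive cofibrant objects. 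On homotopy orbits the map is induced by $j$, which is an acyclic cofibration, and the functor $E\Sigma_{q+}\sm_{\Sigma_q}(-)$ preserves acyclicity because it is a homotopy colimit over $B\Sigma_q$; combined with the fact that $j$ being acyclic makes $(F_i j)^{\boxprod q}$ acyclic (monoidality of the positive stable model structure, i.e.\ the pushout product axiom), we conclude $B_{q-1}\to B_q$ is a $\pi_*$-isomorphism.

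Finally I would assemble the pieces: each stage of the filtration is an acyclic h-cofibration, so $A = B_0 \to B = \colim_q B_q$ is a sequential composite of acyclic h-cofibrations, hence a $\pi_*$-isomorphism since homotopy groups commute with such colimits (h-cofibrations are levelwise split monomorphisms, so the colimit is a homotopy colimit and the $\lim^1$ terms vanish). Stable equivalences in $\Sp^\Sigma(\CH)^{s,+}$ are detected on $\pi_*$ for the relevant objects (or one localizes and the argument is identical), so $A \to B$ is a stable equivalence. Running this over a sequential composition of such pushouts, and using once more that stable equivalences and h-cofibrations are closed under sequential composition, gives the lemma. The main obstacle I anticipate is bookkeeping the precise form of the $\Sigma_q$-objects appearing in the pushout filtration and verifying that the positivity hypothesis needed to apply Proposition~\ref{prop.gen.15.5}(2) is met at each stage — in particular that the relative smash powers $Q^{q-1}_{q-1}(F_i j)$ inherit positive cofibrancy — rather than anything genuinely new about chain complexes.
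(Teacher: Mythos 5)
Your proof is essentially the paper's own argument made explicit: the paper proves this lemma by citing Chadwick--Mandell \cite[7.7--7.10]{cm} and observing that their pushout filtration for $\cO$-algebras carries over to $\Sp^\Sigma(\CH)$ using precisely the two inputs you invoke, namely the h-cofibration properties of Proposition~\ref{prop.h.list} and the homotopy-orbit comparison of Proposition~\ref{prop.gen.15.5}(2). The only point to tighten is terminology: since the generating acyclic cofibrations of the positive stable structure include stabilization maps that are stable equivalences but not level (or naive $\pi_*$-) equivalences, each filtration stage and the sequential colimit should be concluded to be a stable equivalence (an h-cofibration which is a stable equivalence, using that cobase change and sequential composition along h-cofibrations preserve stable equivalences) rather than a $\pi_*$-isomorphism; with that adjustment your argument coincides with the cited one.
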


\begin{proof}[Proof of Lemma~\ref{lem.operad.pout}]
 This follows as in~{\cite[8.7--8.10]{cm}}. 
Chadwick and Mandell consider pushouts of algebras over an operad $\cO$ for three
 different symmetric monoidal categories of spectra simultaneously
 (including $\Sp^\Sigma(\sset)$); all of their arguments hold as well for
 $\Sp^\Sigma(\CH)$ using the properties of h-cofibrations listed in
 Proposition~\ref{prop.h.list} and the generalization
 of~\cite[15.5]{mmss} given in Proposition~\ref{prop.gen.15.5} part
 (2).
\end{proof}

\begin{proof}[Proof of Theorem~\ref{thm.op.compare}]
This follows as in~{\cite[8.2]{cm}} 
again using
Proposition~\ref{prop.h.list} and Proposition~\ref{prop.gen.15.5} .
\end{proof}

\section{Commutative $H\Z$-algebras and $\spec(\sab)$}
\label{sec:hz}

In this section we consider the {Quillen equivalence} 
between $H\Z$-module
spectra and $\spec(\sab)$ and show that it also 
induces an equivalence
on the associated categories of commutative monoids.  Recall the
functor $Z$ from $H\Z$-modules to $\spec(\sab)$ from~\cite{s-alg}
which is given by $Z(M)
= \Zt(M) \sm_{\Zt H\Z} H\Z$ where $\Zt$ is the free abelian group on
the non-basepoint
simplices on each level. The right adjoint of $Z$ is given by
recognizing the unit in $\spec(\sab)$, $\Sym(\tilde{\Z}(\mathbb{S}^1))$,
as isomorphic to
$\Zt(\bS) \iso H\Z$. The right adjoint is labelled $U$ for underlying.
In~\cite[4.3]{s-alg}, the pair $(Z,U)$ was shown to induce a Quillen
equivalence on the standard model structures.  Since $Z$ is strong
symmetric monoidal, $(Z,U)$ also induces an adjunction between the
commutative monoids. We use the right induced model structure on
commutative monoids in  $\spec(\sab)$ and $H\Z$-module
spectra \cite[3.4.1]{ps2}.

\begin{thm}  \label{thm:hzvssab}
The functors $Z$ and $U$ induce a Quillen equivalence between
commutative $H\Z$-algebra spectra and commutative symmetric ring
spectra over $\sab$.

$$\xymatrix{Z\colon C(H\Z\text{-mod}) \ar@<0.5ex>[rr] &&
C(\spec(\sab)) :\! U  \ar@<0.5ex>[ll] }$$
\end{thm}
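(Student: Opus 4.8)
The plan is to upgrade the Quillen equivalence $(Z,U)$ on the standard (module) model structures from \cite[4.3]{s-alg} to a Quillen equivalence on commutative monoids, using the transferred model structures from \cite[3.4.1]{ps2} on both sides. First I would check that $(Z,U)$ is a Quillen adjunction on the commutative-monoid level. Since the model structures on $C(H\Z\text{-mod})$ and $C(\spec(\sab))$ are right-induced from the respective module/spectrum model structures along the free-commutative-monoid adjunctions, and since $U$ commutes with the relevant forgetful functors (by construction $U$ is the underlying-spectrum functor and the forgetful functor to $\spec(\sab)$ detects the weak equivalences and fibrations of the transferred structure), it suffices to observe that $U$ preserves fibrations and acyclic fibrations. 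But fibrations and weak equivalences of commutative monoids are detected on underlying objects, and $U \colon H\Z\text{-mod} \to \spec(\sab)$ was already shown to be a right Quillen functor in \cite[4.3]{s-alg} for the positive stable model structure (the one relevant to commutative monoids by \cite[3.4.1]{ps2}); hence $U$ on commutative monoids is right Quillen.

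Next I would verify the Quillen-equivalence criterion. Because $Z$ is strong symmetric monoidal, it carries free commutative monoids to free commutative monoids, i.e. $Z(\mathbb{P} M) \cong \mathbb{P}(Z M)$ where $\mathbb{P}$ denotes the free commutative monoid functor on each side; similarly the unit and counit of $(Z,U)$ on commutative monoids are computed from those on the underlying categories. To conclude a Quillen equivalence it is enough to show: (i) for every cofibrant commutative monoid $A$ in $C(\spec(\sab))$ the derived unit $A \to U Z A$ (equivalently $A \to U R Z A$ with $R$ a fibrant replacement) is a weak equivalence, and (ii) $U$ reflects weak equivalences between fibrant objects. Part (ii) is immediate since weak equivalences of commutative monoids are created in the underlying category and $U$ on underlying objects reflects weak equivalences between fibrant objects (this is part of the original Quillen equivalence in \cite[4.3]{s-alg}). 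For part (i), the key point is that a cofibrant commutative monoid has underlying object that is \emph{not} in general cofibrant in the spectrum model structure, so one cannot directly invoke the underlying Quillen equivalence on $A$ itself.

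The main obstacle is therefore part (i): controlling the underlying homotopy type of a cofibrant commutative monoid. I would handle this exactly as in the rigidification framework of Pavlov--Scholbach recalled in Section~\ref{sec:cm} (and already invoked for the analogous statements later in the paper): cofibrant commutative monoids in the positive stable model structure are built from free commutative monoids $\mathbb{P}(F_n C)$ with $n \geq 1$ by (transfinite) pushouts along maps of free monoids on generating cofibrations, and Proposition~\ref{prop.gen.15.5}-type arguments show that on underlying objects these pushouts are modeled by the corresponding homotopy pushouts, with the symmetric powers $(F_nC)^{\wedge i}/\Sigma_i$ having the correct ($E\Sigma_i$-twisted) homotopy type because $n$ is positive. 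Since $Z$ is strong symmetric monoidal and a left Quillen equivalence on the underlying spectrum categories, it commutes with $F_n$, with $\wedge$, and with these symmetric-power constructions up to weak equivalence; an induction over the cellular filtration of the cofibrant commutative monoid $A$ then shows $A \to U Z A$ is a weak equivalence on underlying objects, one filtration stage at a time, using that $U$ preserves the relevant filtered colimits and pushouts along h-cofibrations (as in Proposition~\ref{prop.h.list}). This reduces everything to the already-established underlying Quillen equivalence $(Z,U)$ together with the homotopical control of symmetric powers in positive degrees, and completes the proof.
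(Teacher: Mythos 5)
Your overall reduction---use the transferred model structures, note that $U$ creates fibrations and weak equivalences, and verify the derived unit on cofibrant commutative algebras---matches the paper's first step (with one slip: the unit $A\to UZA$ must be tested on cofibrant objects of $C(H\Z\text{-mod})$, the source of the left adjoint $Z$, not on cofibrant objects of $C(\spec(\sab))$). Where you genuinely diverge is in handling the fact that a cofibrant commutative algebra need not be cofibrant as a module. The paper runs no cellular induction at all: it passes to the flat ($R$-)model structure of \cite[Theorem 3.2]{s-convenient}, in which cofibrant commutative $H\Z$-algebras are flat cofibrant as underlying modules, takes a standard-cofibrant replacement $cB\to B$ at the module level (a level equivalence), and then needs only Lemma~\ref{lem.flat.level}, that $Z$ takes level equivalences between flat cofibrant objects to level equivalences; Lemma~\ref{lem.flat.cofibrant} and the fact that standard-cofibrant commutative algebras are flat cofibrant then finish the proof, avoiding any flat model structure on $C(\spec(\sab))$.

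Your cellular-induction route (in the spirit of \cite[\S 15]{mmss} and \cite{cm}) is a standard alternative, but as sketched it has a gap at its central step: the claim that, because $Z$ is strong symmetric monoidal and left Quillen, it ``commutes with these symmetric-power constructions up to weak equivalence.'' Left Quillen-ness only gives preservation of weak equivalences between \emph{cofibrant} objects, whereas the objects occurring in the filtration---the quotients $(F_nC)^{\sm i}/\Sigma_i$, the Borel constructions $E\Sigma_{i+}\sm_{\Sigma_i}(-)$, and the stage-wise pushouts---are not cofibrant. To push the induction through you must show that $Z$ preserves level or stable equivalences between such non-cofibrant but suitably flat objects; this is exactly the delicate point, and it is what the paper's Lemma~\ref{lem.flat.level} supplies in its own argument. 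You would also need an analogue of Proposition~\ref{prop.gen.15.5} on the $H\Z$-module side (the paper proves it only for $\spec(\CH)$, with remarks about $\sab$ and $\ch$), and the assertion that $U$ preserves pushouts along h-cofibrations is not literally true ($U$ forgets from $\sab$ to pointed simplicial sets and is a right adjoint), so the filtration comparison has to be carried out in $\spec(\sab)$ and transported back using that $U$ detects stable equivalences. None of these obstacles looks fatal, but each is real work; the flat-model-structure reduction is precisely the paper's device for sidestepping all of it.
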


\begin{proof}
It follows from~\cite[proof of 4.3]{s-alg} that $U$ preserves and detects all
weak equivalences and fibrations since weak equivalences and
fibrations are determined on the underlying category of symmetric
spectra in pointed simplicial sets, $\spec(\sset_*)$. To
show that $(Z,U)$ is a Quillen equivalence, by~\cite[A.2 (iii)]{mmss}
it is enough to show that for all cofibrant
commutative $H\Z$ algebras $A$, the map $A \to UZ A$ is a stable
equivalence. If $A$ were in fact cofibrant as an $H\Z$ module
spectrum, this would follow from the Quillen equivalence on the module
level~\cite{s-alg}.  In the standard model structure on commutative
algebra spectra though, cofibrant objects are not necessarily
cofibrant as modules. The positive flat model (or $R$-model) structures
from~\cite[Theorem 3.2]{s-convenient} were developed for just this
reason. In Lemma~\ref{lem.flat.cofibrant} we show that for positive flat
cofibrant commutative $H\Z$ algebras $B$, the map $B \to UZB$ is a
stable equivalence.  It follows from Lemma~\ref{lem.flat.cofibrant}
that $A \to UZA$ is a stable equivalence for all standard (positive) cofibrant
commutative $H\Z$ algebras $A$, since such $A$ are also positive flat cofibrant
by~\cite[Proposition 3.5]{s-convenient}.  See also 
{\cite[8.10]{ps1}} for an alternative approach to this theorem.
\end{proof}

As discussed in the proof above, we next consider the flat model (or
$R$-model) structures from~\cite[Theorem 3.2]{s-convenient};  see
also~\cite[III, \S\S 2,3]{schwede-book}.

\begin{lem}\label{lem.flat.cofibrant}
For positive flat cofibrant commutative $H\Z$ algebras $B$, the map $B \to UZB$
is a stable equivalence.
\end{lem}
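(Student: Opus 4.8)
The plan is to reduce the statement to the module-level Quillen equivalence of \cite{s-alg} by filtering a flat cofibrant commutative $H\Z$-algebra $B$ through its cells and using the good behavior of the $n$-th extended power functor $X \mapsto X^{\wedge n}/\Sigma_n$ under the flat model structure. First I would recall from \cite[Theorem 3.2, Proposition 3.5]{s-convenient} that a cofibrant object $B$ in the flat model structure on commutative $H\Z$-algebras is a retract of a cell object built from the generating cofibrations $I$ of the flat (a.k.a.\ $R$-)model structure on $H\Z$-modules, attached via the free commutative algebra functor $\mathbb{P}$; in particular $B$ is flat as an $H\Z$-module, so $B \to UZB$ being a stable equivalence is detected after smashing, and one may as well assume $B$ is such a cell object. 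The point is that the underlying $H\Z$-module of a cell commutative algebra is filtered, with associated graded given by wedges of extended powers $X^{\wedge n}/\Sigma_n$ of flat cofibrant modules $X$, and both $U$ and $Z$ (being strong symmetric monoidal, preserving colimits and flat cofibrations) are compatible with this filtration.

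Next I would make precise the filtration argument. For a pushout $B_{\alpha-1} \to B_\alpha$ along $\mathbb{P}(X) \to \mathbb{P}(Y)$ coming from a generating cofibration $X \to Y$ of flat $H\Z$-modules, the standard filtration of the pushout (as in \cite[\S 15]{mmss} or the commutative-monoid filtration lemmas of \cite{s-convenient}, and also visible through Proposition~\ref{prop.gen.15.5}) expresses $B_\alpha$ as a sequential colimit of cofibrations of $H\Z$-modules whose cofibers are of the form $B_{\alpha-1} \wedge_{H\Z} (Q^n_n)$, where $Q^n_n$ is a quotient of an iterated pushout-product built from $X \to Y$, carrying a free enough $\Sigma_n$-action that $Q^n_n \wedge_{\Sigma_n}(-)$ behaves like a homotopy orbit; this is exactly the flatness/$h$-cofibration input that makes the flat model structure work for commutative monoids. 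Since $Z$ is strong symmetric monoidal and preserves these colimits and the relevant (flat) cofibrations, $Z$ commutes with the whole filtration, and $U$ preserves all weak equivalences and the relevant colimits of cofibrations; so $B \to UZB$ is a filtered colimit of maps whose associated graded pieces are, up to stable equivalence, of the form $\mathrm{id} \wedge (\text{unit map on an extended power of a flat cofibrant module})$. On flat cofibrant \emph{modules} the unit map is a stable equivalence by \cite{s-alg}, and the flat model structure is designed so that the extended power functor and smashing with a flat module preserve stable equivalences between flat cofibrant objects; transfinite composition and colimits along cofibrations of stable equivalences are stable equivalences, so $B \to UZB$ is a stable equivalence.

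The main obstacle is the middle step: controlling the interaction of $Z$ and $U$ with the extended-power summands $X^{\wedge n}/\Sigma_n$ of the cell filtration and verifying that, on these pieces, the comparison map is a stable equivalence. The essential point is that for flat cofibrant $X$ the quotient $X^{\wedge n}/\Sigma_n$ computes the homotopy orbits $(E\Sigma_n)_+ \wedge_{\Sigma_n} X^{\wedge n}$ — this is the spectrum-level analogue of Proposition~\ref{prop.gen.15.5}(2), available here because flat cofibrant objects have $\Sigma_n$-free enough smash powers — and that $Z$, being strong symmetric monoidal and sending flat cofibrations to flat cofibrations, carries this homotopy-orbit description across, while $U$ preserves it since it preserves stable equivalences and filtered colimits along $h$-cofibrations. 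Once the extended powers are identified with honest homotopy orbits on both sides, the module-level equivalence $X \to UZX$ for flat cofibrant $X$ propagates (homotopy orbits preserve stable equivalences of $\Sigma_n$-spectra that are underlying-cofibrant), and the transfinite induction over the cells of $B$ closes the argument. I would expect the write-up to lean on \cite[\S 15]{mmss}, \cite{s-convenient}, and Proposition~\ref{prop.gen.15.5} for these facts rather than re-proving them.
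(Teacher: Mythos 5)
Your overall strategy differs from the paper's, and it has a genuine gap at exactly the point where the paper has to do real work. The base case of your induction is the assertion that for flat cofibrant \emph{modules} $X$ the unit $X \to UZX$ is a stable equivalence ``by \cite{s-alg}''. But the Quillen equivalence of \cite[Proposition 4.3]{s-alg} is with respect to the \emph{standard} model structures, so it only gives the unit statement for standard-cofibrant modules; flat cofibrant modules need not be standard cofibrant. Bridging this discrepancy is precisely the content of the paper's proof: one takes a standard cofibrant replacement $cB \to B$ in $H\Z$-modules (a trivial fibration, hence a level equivalence), uses \cite{s-alg} on $cB$, and then needs the separate Lemma~\ref{lem.flat.level} --- that $Z$ takes level equivalences between flat cofibrant objects to level equivalences, proved by factoring $Z$ through $\Zt$ and an extension of scalars --- to conclude that $UZcB \to UZB$ is an equivalence and close the square by two-out-of-three. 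Your write-up never addresses this flat-versus-standard cofibrancy issue, so the module-level input you feed into the filtration is not actually available as stated.

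Structurally, your cell-filtration and extended-power analysis is also both unnecessary and delicate. It is unnecessary because the only commutative-algebra input the paper needs is \cite[Corollary 4.3]{s-convenient}: a flat cofibrant commutative $H\Z$-algebra is flat cofibrant as an underlying module. Once you have that (and you do invoke it), the statement reduces entirely to the module level and no induction over cells, no $\mathbb{P}$-filtration of pushouts, and no homotopy-orbit identification of $X^{\wedge n}/\Sigma_n$ is needed; your filtration essentially re-derives the hard part of \cite{s-convenient} while leaving the genuinely new step unproved. It is delicate because your commutation claims are not automatic: $U$ is a right adjoint whose underlying levelwise functor (simplicial abelian groups to pointed simplicial sets) does not preserve pushouts or quotients, so ``$U$ preserves the relevant colimits of cofibrations'' would itself require a homotopical argument (e.g.\ via stability) rather than being a formal fact. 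I recommend replacing the filtration by the paper's short diagram argument and supplying the analogue of Lemma~\ref{lem.flat.level}.
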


\begin{proof}
The crucial property for positive flat cofibrant ($H\Z$-cofibrant) commutative
monoids is that they are also (absolute) flat cofibrant as underlying modules.
Thus, if $B$ is a positive flat cofibrant commutative $H\Z$-algebra, then it is
also an (absolute) flat cofibrant $H\Z$-module by~\cite[Corollary
4.3]{s-convenient}.  (In fact $B$ is also a positive flat cofibrant $H\Z$-module by~\cite[Corollary
4.1]{s-convenient}, but we do not use that here.)  Since the Quillen equivalence
in~\cite[Proposition 4.3]{s-alg} is with respect to the standard model
structures~\cite[Proposition 2.9]{s-alg}, we next translate to that
setting.  Consider a cofibrant replacement $p\colon cB \to B$ in the
standard model structure on $H\Z$-modules; the map $p$ is a trivial
fibration and hence a level equivalence. Consider the commuting
diagram:

$$\xymatrix{cB\ar[d]_{}\ar [r]^(0.4){p}& B\ar [d]^{}\\
                      UZcB\ar [r]^{}&UZB}$$
\noindent
The left map is a stable equivalence by~\cite[Proposition 4.3]{s-alg}.
In Lemma~\ref{lem.flat.level} below we show that $Z$ takes level
equivalences between flat cofibrant objects to level equivalences.  By
~\cite[Proposition 2.8]{s-convenient}, $cB$ is flat cofibrant, so it
follows that the bottom map is also a stable equivalence.  Thus, we
conclude that the
{right} map is a stable equivalence as well.
\end{proof}

\begin{lem}\label{lem.flat.level}
The functor $Z$ takes level equivalences between flat cofibrant
objects to level equivalences.
\end{lem}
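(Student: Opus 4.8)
The plan is to reduce the statement about symmetric spectra in $\sab$ to a statement about individual simplicial abelian groups, using the fact that level equivalences in $\spec(\sab)$ are detected levelwise. First I would recall that $Z(M) = \Zt(M) \sm_{\Zt H\Z} H\Z$, so at level $n$ the functor $Z$ is built from $\Zt$ applied to the level-$n$ pointed simplicial set underlying $M$, followed by a base change along $\Zt H\Z \to H\Z$. Thus it suffices to understand how $\Zt$ interacts with level equivalences between flat cofibrant objects, and then check that the relative smash $- \sm_{\Zt H\Z} H\Z$ preserves the relevant equivalences. The key structural input is that flat cofibrant objects in $\spec(\sab)$ are, at each level, built from free $\Sigma_n$-cells, so that levelwise they are cofibrant simplicial abelian groups on which the relevant free functors behave homotopically well.

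The main steps, in order, would be: (i) reduce to a levelwise statement, invoking that a map in $\spec(\sab)$ is a level equivalence iff it is so after each $\mathit{Ev}_n$; (ii) observe that on underlying pointed simplicial sets, a level equivalence between flat cofibrant objects consists of weak equivalences between cofibrant pointed simplicial sets (this uses that flat cofibrant symmetric spectra in $\sab$ have cofibrant underlying simplicial sets at each level, after forgetting the abelian group structure and the $\Sigma_n$-action, by the cellular description of flat cofibrations as in~\cite[III]{schwede-book}); (iii) apply the fact that $\Zt$ (the reduced free abelian group functor) takes weak equivalences between cofibrant pointed simplicial sets to weak equivalences — this is standard, e.g.\ because $\Zt$ is a left Quillen functor from pointed simplicial sets to simplicial abelian groups; (iv) conclude that $\Zt(M) \to \Zt(M')$ is a level equivalence of $\Zt H\Z$-modules; (v) base change along $\Zt H\Z \to H\Z$: here one uses that the objects $\Zt(M)$ are flat (cofibrant as $\Zt H\Z$-modules, or at least flat enough) so that $- \sm_{\Zt H\Z} H\Z$ preserves level equivalences between them, or alternatively that this base change is actually a Quillen equivalence and preserves all weak equivalences between cofibrant objects. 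Assembling (iii)–(v) gives that $Z(M) \to Z(M')$ is a level equivalence.

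I expect the main obstacle to be step (ii): verifying that the underlying simplicial sets (at each level, forgetting both the abelian and equivariant structure) of a flat cofibrant commutative monoid — or more precisely of the flat cofibrant objects that arise here — are genuinely cofibrant, i.e.\ that the flat cofibrations really do restrict to honest cofibrations of pointed simplicial sets levelwise. This is where one must carefully track the cellular structure of flat cofibrations (generating flat cofibrations are built from $G_n$ applied to generating cofibrations, smashed appropriately), and check that applying $\mathit{Ev}_n$ and forgetting structure leaves one with a retract of a cell complex in $\sset_*$. The other potential subtlety is the base-change step (v): one should confirm that $\Zt(M)$ is cofibrant (or flat) as a $\Zt H\Z$-module whenever $M$ is flat cofibrant, so that $\sm_{\Zt H\Z} H\Z$ — which is a left Quillen functor — preserves the equivalence; this should follow formally since $Z$ is a composite of left Quillen functors and flat cofibrant objects are cofibrant in the flat structure, but it is worth spelling out. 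Once these two points are secured, the rest is a routine chain of homotopy-invariance statements for left Quillen functors.
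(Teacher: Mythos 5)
Your proposal is correct and follows essentially the same route as the paper: factor $Z$ as $\Zt$ followed by extension of scalars $\mu_*$ along $\mu\colon \Zt H\Z \to H\Z$, check that $\Zt$ preserves level equivalences and flat cofibrant objects, and then use that $\mu_*$ is left Quillen for the flat level structures (since $\mu^*$ preserves level equivalences and level fibrations) to conclude via Ken Brown's lemma. One small recalibration: the obstacle you single out in step (ii) is vacuous, since every pointed simplicial set is cofibrant and $\Zt$ in fact preserves all weak equivalences levelwise, whereas the genuinely substantive point is the one you defer in step (v) -- that $\Zt$ sends flat cofibrant $H\Z$-modules to flat cofibrant $\Zt H\Z$-modules -- which the paper verifies concretely by noting that the generating flat cofibrations $H\Z \wedge (\text{monomorphism})$ are carried, by strong symmetric monoidality and preservation of monomorphisms, into the generating flat $\Zt H\Z$-cofibrations.
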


\begin{proof}
Here we will consider $Z$ as a composite of two functors and we will
always work over symmetric spectra in pointed simplicial sets, $\spec(\sset_*)$,
by forgetting from $\sab$ to $\sset_*$ wherever necessary.  The first
component is $\Zt$ from $H\Z$-modules to $\Zt H\Z$-modules, and the
second component is the extension of scalars functor $\mu_*$
associated to the ring homomorphism $\mu\colon \Zt H\Z \to H\Z$ induced by
recognizing $H\Z$ as isomorphic to $\Zt \bS$ and using the monad
structure on $\Zt$.

First, note that $\Zt$ is applied to each level and preserves level
equivalences as a functor from simplicial sets to simplicial abelian
groups.   The functor $\Zt$ also preserves flat cofibrations, and
hence flat cofibrant objects.  The generating flat cofibrations
($H\Z$-cofibrations) are of the form $H\Z \otimes M$ where $M$ is the
class of monomophisms of symmetric sequences.  Since $\Zt$ is strong
symmetric monoidal, these maps are taken to maps of the form $\Zt(H\Z)
\otimes \Zt(M)$.  Since $\Zt$ preserves monomorphisms, these are
contained in the generating flat ($\Zt H\Z$-) cofibrations, which are
of the form $\Zt H\Z \otimes M$.

Next, note that restriction of scalars, $\mu^*$,  preserves level
equivalences and level fibrations since they are determined as maps on
the underlying flat ($S$-)model structure; see the paragraph above
~\cite[Theorem 2.6]{s-convenient} and~\cite[Proposition
2.2]{s-convenient}.  It follows by adjunction that $\mu_*$ preserves
the flat cofibrations and level equivalences between flat cofibrant
objects.
\end{proof}

\begin{rem}
In the proof of Theorem \ref{thm:hzvssab} we use a reduction argument
that allows us to establish the desired Quillen equivalence by
checking that the unit map of the adjunction is a weak equivalence on
flat cofibrant objects in the flat model structure on commutative
$H\Z$-algebras. This approach avoids a discussion of a flat model
structure on commutative symmetric ring spectra in simplicial abelian
groups.
\end{rem}

\section{Dold-Kan correspondence for commutative monoids}

The classical Dold-Kan correspondence is an equivalence of categories
between the category of simplicial abelian groups, $\sab$, and the
category of non-negatively graded chain complexes of abelian groups,
$\chab$. In this section we establish a Quillen equivalence between
categories of commutative monoids in symmetric sequences of simplicial
abelian groups, $C(\sab^\Sigma)$,  and
non-negatively graded chain complexes, $C(\ch^\Sigma)$, carrying positive model
structures. {In the special case of pointed commutative monoids in
  symmetric sequences of simplicial modules and non-negatively graded
  chain complexes, such a Quillen equivalence is established in
  \cite[Theorem 6.5]{R}.}

In the next section we extend this equivalence from
symmetric sequences to symmetric spectra.  We first define the
relevant model structures on the
categories of symmetric sequences in simplicial abelian groups,
$\sigmas$, and chain complexes, $\sigmadg$.

\begin{defn} \label{def:modelstructure}
  \begin{itemize}
\item[]
  \item
Let $f\colon A \ra B$ be a morphism in $\sigmadg$. Then $f$ is a positive
weak equivalence, if $H_*(f)(\ell)$ is an isomorphism for positive levels
$\ell > 0$.
It is a positive fibration, if $f(\ell)$ is a fibration in the projective
model structure on non-negatively graded chain complexes for all $\ell >0$.
  \item
A morphism $g\colon C \ra D$ in $\sigmas$ is a positive fibration if $g(\ell)$
is a fibration of simplicial abelian groups in positive levels and it is a
positive weak equivalence if $g(\ell)$ is a weak equivalence for all
$\ell >0$.
  \end{itemize}
\end{defn}
In both cases, the positive cofibrations are determined by their left lifting
property with respect
to positive acyclic fibrations. Positive cofibrations are
cofibrations that are isomorphisms in level zero. One can check directly
that the above definitions give model category structures or use Hirschhorn's
criterion \cite[11.6.1]{Hi} and restrict to the diagram category whose
objects are natural
numbers greater than or equal to one and then use the trivial model
structure in level zero with
cofibrations being isomorphisms and weak equivalences and fibrations being
arbitrary. 
{The generating cofibrations are maps of the form
$G_r(i)$  for $r$ positive and such that  
$i$ is a generating cofibration in chain complexes
(simplicial modules). The generating acyclic cofibrations are maps of the form
$G_r(j)$ for $r$ positive and where 
$j$ is a generating acyclic cofibration in chain complexes
(simplicial modules).  }

We also get the corresponding
right-induced model structures on commutative monoids:

\begin{defn} \label{defn:positivemodelcomm}
An $f$  in $C(\sigmadg)(A,B)$ is a positive weak equivalence (fibration) if the
map on underlying symmetric sequences, $U(f)$ in $\sigmadg(U(A),U(B))$, is a
positive weak
equivalence (fibration). Similarly,  $g$ in $C(\sigmas)(C,D)$ is a positive
weak equivalence (fibration) if the map on underlying symmetric sequences,
$U(g) \in \sigmas(U(C),U(D))$ is a positive weak equivalence (fibration).
\end{defn}
In \cite[5.8, 6.2]{R} these model structures were established for
  \emph{pointed} commutative monoids in symmetric sequences of simplicial
  modules and non-negatively graded chain complexes. An object $A$ in
  $C(\sigmadg)$ or $C(\sigmas)$ is called pointed, if its zeroth level
  is the unit of the monoidal structure of the base category. We recall the key
points of the argument in the proof below. {This also makes it clear}
that the results of \cite{R} can be adapted to the setting of Definition
\ref{def:modelstructure}. 

\begin{lem} 
The structures defined in Definition \ref{defn:positivemodelcomm} yield
cofibrantly generated model categories where the generating cofibrations are
$C(G_r(i))$ and the generating acyclic cofibrations are $C(G_r(j))$ with $i,j$
as above and $r$ positive.
\end{lem}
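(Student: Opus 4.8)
The plan is to obtain both model structures by right-induction along the free--forgetful adjunction $F \colon \sigmadg \rightleftarrows C(\sigmadg) \colon U$ (and similarly for $\sigmas$), using the transfer criterion for cofibrantly generated model categories, e.g.\ \cite[11.3.2]{Hi} or the crossed axiom of Schwede--Shipley. The candidate generating cofibrations and generating acyclic cofibrations are $FU$ applied to the generating (acyclic) cofibrations of the base, i.e.\ $C(G_r(i)) = F(G_r(i))$ and $C(G_r(j)) = F(G_r(j))$ for $r$ positive; since $U$ commutes with filtered colimits, the smallness hypotheses needed for the small object argument are inherited from $\sigmadg$ and $\sigmas$, which are cofibrantly generated (indeed combinatorial). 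Thus it remains to verify the acyclicity condition: every map obtained as a transfinite composite of pushouts of maps in $F(G_r(j))$ is sent by $U$ to a positive weak equivalence.

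First I would reduce, as in the remarks preceding the lemma, to the diagram category indexed by positive integers, so that ``positive weak equivalence'' is literally ``levelwise weak equivalence'' in $\ch$ (resp.\ $\sab$) over the positive levels, with level zero governed by the trivial model structure. The key step is then a filtration argument for a single pushout
$$\xymatrix{ F(A) \ar[r] \ar[d] & R \ar[d] \\ F(B) \ar[r] & R' }$$
along a generating acyclic cofibration $A \to B$ in the positive part of $\sigmadg$ (resp.\ $\sigmas$). One writes $R' = \colim_n R_n$ with $R_0 = R$ and $R_{n+1}$ obtained from $R_n$ by a pushout whose attaching map is built from $\Sigma_n$-orbits of the form $\Sigma_n \times_{\Sigma_p\times\Sigma_q}(\,\cdot\,)$ involving the $n$-fold ``norm'' of the cofibration $A \to B$ smashed with symmetric powers of $R$. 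This is the standard filtration of a pushout of free commutative monoids (cf.\ \cite{mmss}, \cite{W}, or Pavlov--Scholbach \cite{ps1}). Because we are in the \emph{positive} setting, every relevant smash factor lives in level $\geq 1$, so each orbit appearing is a quotient of a $\Sigma_n$-object that is \emph{free} in the $\mathcal I$-direction; over $\Q$ this would be automatic, but over a general ground ring one uses instead that $A\to B$ is an acyclic cofibration in a category where the generating acyclic cofibrations are split monomorphisms with contractible cofiber levelwise, so that each associated graded piece of the filtration is a levelwise weak equivalence, and $U$ of the whole pushout is a weak equivalence by stability of (acyclic) cofibrations under pushout and transfinite composition.

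The main obstacle — and the place where positivity is essential — is precisely this acyclicity check for commutative (as opposed to associative) monoids: one must show that the symmetric powers $(B/A)^{\otimes n}/\Sigma_n$ do not destroy quasi-isomorphisms. In characteristic zero this is free, but here one leans on the fact that in positive levels the relevant $\Sigma_n$-actions are suitably free (the same phenomenon underlying Proposition~\ref{prop.gen.15.5}), together with the observation that the generating acyclic cofibrations $G_r(j)$ for $r\geq 1$ have underlying maps that are h-cofibrations with levelwise contractible cokernel, so their norms remain levelwise acyclic cofibrations after passing to $\Sigma_n$-orbits. Assembling these: for $\sigmadg$ one invokes the h-cofibration technology of Proposition~\ref{prop.h.list} (items (3)--(5)) to see the filtration quotients are h-cofibrations which are levelwise quasi-isomorphisms, hence $U R \to U R'$ is a positive weak equivalence; for $\sigmas$ one uses the analogous splitting properties of monomorphisms of simplicial abelian groups. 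Finally, the characterization of generating cofibrations as $C(G_r(i))$ and generating acyclic cofibrations as $C(G_r(j))$ is part of the transfer theorem's output, so no separate argument is needed there.
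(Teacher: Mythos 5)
Your proposal is correct and follows essentially the same route as the paper: the model structures are right-induced along the free--forgetful adjunction, with $C(G_r(i))$ and $C(G_r(j))$ ($r>0$) as generating sets, the fibrations and acyclic fibrations identified by adjunction, and the factorizations produced by the small object argument. The paper's own proof is simply terser, deferring the acyclicity verification --- which you sketch via the standard filtration of pushouts of free commutative monoids and the freeness of the $\Sigma_n$-action on the symmetric-group coordinate in positive levels --- to \cite[\S 5]{R}.
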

\begin{proof}
Adjunction gives us that the maps with the right lifting property with respect
to all $C(G_r(j)), r>0$ are precisely the positive fibrations and the ones
with the
RLP with respect to all $C(G_r(i)), r>0$ are the positive acyclic fibrations.
Performing the small object argument based on the $C(G_r(j))$ for all positive
$r$  yields a factorization of any map as a positive acyclic cofibration and
a fibration
whereas the small object argument based on the $C(G_r(i))$ for positive $r$
gives the other factorization. 
\end{proof}

Let $\underline{\Z}$ denote the constant simplicial
abelian group with value $\Z$.
In the positive model structures cofibrant objects are commutative
monoids whose  zeroth level is isomorphic to $\underline{\Z}$ in $C(\sigmas)$
or to  $\Z[0]$ in $C(\sigmadg)$. {In particular, such objects
  are pointed in the sense of \cite[5.1]{R}.}

Let $\Gamma$ denote the functor from non-negatively graded chain
complexes to simplicial abelian groups that is the inverse of the
normalization functor. We can extend $\Gamma$ to a functor from
$\sigmadg$ to $\sigmas$ by applying $\Gamma$ in every level. As the
category of symmetric sequences of abelian groups is an abelian
category, the pair $(N, \Gamma)$ is still an equivalence of
categories.

{In the following we extend the result \cite[theorem 6.5]{R} in
the pointed setting, to the setting of positive model structures.} 
\begin{thm} \label{thm:dkpos}
Let  $\comsigmas$ and $ \comsigmadg$ carry the positive model structures. Then
the normalization functor $N\colon \comsigmas \ra \comsigmadg$ is the
right adjoint in a Quillen equivalence and its left adjoint is denoted $L_N$.
\end{thm}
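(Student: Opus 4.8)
The plan is to produce the left adjoint $L_N$ formally, to check the Quillen pair condition levelwise from the classical Dold--Kan correspondence, and then to reduce the assertion that $(L_N,N)$ is a Quillen equivalence to a single computation with free commutative monoids, where positivity does the essential work. First I would set up the adjunction: the levelwise Dold--Kan equivalence gives mutually inverse functors $N\colon\sigmas\to\sigmadg$ and $\Gamma\colon\sigmadg\to\sigmas$, so both $\Gamma\dashv N$ and $N\dashv\Gamma$, and the Eilenberg--Zilber shuffle $\nabla\colon NA\odot NB\to N(A\odot B)$ is symmetric, so $(N,\nabla)$ is lax symmetric monoidal and induces a functor $N\colon\comsigmas\to\comsigmadg$ commuting with the forgetful functors $U$ to symmetric sequences. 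As $\comsigmas$ and $\comsigmadg$ are locally presentable and $N$ on commutative monoids preserves all limits (computed on underlying objects) and is accessible, the adjoint functor theorem yields a left adjoint $L_N$. Writing $\mathbb{P}$ for the free commutative monoid functor, the chain of natural isomorphisms
$$\Hom_{\comsigmas}(L_N\mathbb{P}(Y),C)\iso\Hom_{\sigmadg}(Y,N(UC))\iso\Hom_{\sigmas}(\Gamma Y,UC)\iso\Hom_{\comsigmas}(\mathbb{P}(\Gamma Y),C)$$
identifies $L_N\mathbb{P}(Y)$ with $\mathbb{P}(\Gamma Y)$, and since $N$ preserves direct sums and $\Sigma_n$-orbits, unwinding the unit shows that on underlying symmetric sequences $\mathbb{P}(Y)\to NL_N\mathbb{P}(Y)$ is the sum over $n\geq 0$ of the maps induced on $\Sigma_n$-orbits by the iterated shuffle $\nabla^{(n)}\colon(NA)^{\odot n}\to N(A^{\odot n})$ with $A=\Gamma Y$.

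Next I would verify the Quillen pair condition and reduce the problem. By Definition~\ref{defn:positivemodelcomm} the positive (acyclic) fibrations and the positive weak equivalences in $\comsigmas$ and $\comsigmadg$ are detected on underlying symmetric sequences, and since $N\colon\sab\to\chab$ preserves fibrations and acyclic fibrations and preserves and reflects weak equivalences, the same holds levelwise for $N$ on commutative monoids; thus $(L_N,N)$ is a Quillen pair and $N$ preserves and reflects all positive weak equivalences. It follows that $(L_N,N)$ is a Quillen equivalence precisely when the unit $A\to NL_NA$ is a positive weak equivalence for every cofibrant $A\in\comsigmadg$. Every such $A$ is a retract of a cell object built from the generating positive cofibrations $C(G_r(i))=\mathbb{P}(G_r(i))$, $r>0$; a standard induction over the cell structure, using the usual filtration of a pushout along a map $\mathbb{P}(f)$, the identity $L_N\mathbb{P}\iso\mathbb{P}\Gamma$ from the first step, and gluing arguments in the positive model structures (as in \cite[\S 5]{R} and \cite{cm}), reduces the claim to the following: for a positive cofibrant $A\in\sigmas$ and every $n$, the map $(\nabla^{(n)})_{\Sigma_n}\colon((NA)^{\odot n})_{\Sigma_n}\to(N(A^{\odot n}))_{\Sigma_n}$ is a positive weak equivalence (the one further input being that the shuffle $NA\odot NB\to N(A\odot B)$ is a positive weak equivalence, which is classical Eilenberg--Zilber).

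The hard part is this last statement, that normalization commutes with the free commutative monoid functor up to positive weak equivalence. The iterated shuffle $\nabla^{(n)}$ is $\Sigma_n$-equivariant (the shuffle is symmetric) and a level chain homotopy equivalence by Eilenberg--Zilber, but it is not an equivariant homotopy equivalence, so passing to strict $\Sigma_n$-orbits is not automatically homotopical; the key point is that \emph{positivity} makes the relevant $\Sigma_n$-actions free. Concretely, for $A=G_r(W)$ with a simplicial abelian group $W$ and $r\geq 1$, the object $(G_rW)^{\odot n}$ is concentrated in level $rn$, where it equals $\Z[\Sigma_{rn}]\otimes W^{\otimes n}$ with $\Sigma_n$ acting diagonally --- by the sign permutation on $W^{\otimes n}$ and by right multiplication through the block-permutation embedding $\Sigma_n\hookrightarrow\Sigma_{rn}$ on $\Z[\Sigma_{rn}]$ --- and this embedding exists, and the action is free, exactly because $r\geq 1$. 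Hence $\Z[\Sigma_{rn}]$ is a free $\Z[\Sigma_n]$-module, so (taking $W$ degreewise free, as the $W$ arising here are) both $(G_rW)^{\odot n}$ and its normalization are, in level $rn$, bounded-below complexes of free $\Z[\Sigma_n]$-modules, and there $\nabla^{(n)}$ equals $\id_{\Z[\Sigma_{rn}]}$ tensored with the $\Sigma_n$-equivariant iterated Eilenberg--Zilber map --- a $\Z[\Sigma_n]$-equivariant quasi-isomorphism of bounded-below complexes of free $\Z[\Sigma_n]$-modules, hence a $\Sigma_n$-equivariant chain homotopy equivalence; therefore $(\nabla^{(n)})_{\Sigma_n}$ is a level quasi-isomorphism. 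The case of a general positive cofibrant $A\in\sigmas$ follows by the analogous cell induction carried out inside $\sigmas$, using in addition that the shuffle $NA\odot NB\to N(A\odot B)$ is a positive weak equivalence.

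Granting the key lemma, the unit $A\to NL_NA$ is a positive weak equivalence on every free commutative monoid, the cellular induction of the second step carries this to all cofibrant $A\in\comsigmadg$, and since $N$ reflects weak equivalences the pair $(L_N,N)$ is a Quillen equivalence. Nothing is special to $\Z$, so the argument works over an arbitrary commutative ground ring. I expect the main obstacle to be the key lemma of the third step, and within it the bookkeeping required to pass from free commutative monoids and generating cofibrations to all positive cofibrant objects; the remaining ingredients are formal or classical.
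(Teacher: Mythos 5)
Your proposal is correct in outline and its key mechanism is sound, but it takes a different (self-contained) route from the paper, whose own proof is essentially a pair of citations: the left adjoint $L_N$ is taken from \cite[6.4]{R}, the reduction is the observation that $N$ is right Quillen and detects weak equivalences and that every object of $\comsigmas$ is fibrant, and the statement that the unit $A\to NL_NA$ is a weak equivalence on (pointed) cofibrant objects is quoted from the proof of \cite[6.5]{R}. You instead construct $L_N$ by the adjoint functor theorem, identify $L_N$ on free objects via the adjunction isomorphisms (this is exactly the paper's later Lemma \ref{lem:sym}), and then re-derive the content of \cite[6.5]{R}: the unit on a free commutative monoid is the $\Sigma_n$-orbit of the iterated shuffle, and positivity makes the $\Sigma_n$-action on $(G_rW)^{\odot n}$ free (block embedding $\Sigma_n\hookrightarrow\Sigma_{rn}$, $r\geq 1$), so the equivariant Eilenberg--Zilber quasi-isomorphism between bounded-below complexes of induced/projective $\Z[\Sigma_n]$-modules is an equivariant homotopy equivalence and descends to strict orbits. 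That is precisely the idea behind the cited reference (and is of the same flavour as Proposition \ref{prop.gen.15.5} elsewhere in the paper), so what your argument buys is independence from \cite{R}, at the cost of having to carry out in detail the two steps you only sketch: the passage from free commutative monoids to arbitrary positive cofibrant objects by the filtration of pushouts along $\mathbb{P}(f)$ and compatibility of the unit with that filtration (this is the bulk of the work in \cite[proof of 6.5]{R}), and the explicit remark that every object of $\comsigmas$ is fibrant, which is what legitimizes replacing the derived unit by the actual unit in your equivalence criterion. Also note a cosmetic slip: in $\sab$ the $\Sigma_n$-action on $W^{\otimes n}$ carries no signs; signs only enter after normalization, though this does not affect the freeness argument.
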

\begin{proof}
A left adjoint $L_N$ to $N$ is constructed in \cite[6.4]{R}. As positive
fibrations
and weak equivalences are defined via the forgetful functors to $\sigmas$ and
$\sigmadg$, $N$ is a right Quillen functor and $N$ also detects weak
equivalences. Every object is fibrant, so we have to show that the unit of the
adjunction
$$ \eta\colon A \ra NL_N(A)$$
is a weak equivalence for all cofibrant $A \in \comsigmadg$. But cofibrant
objects are pointed and for these it is shown in \cite[proof of
theorem 6.5]{R} that the unit map is a weak equivalence.
\end{proof}


\section{Extension to commutative ring spectra} \label{sec:dk}
{
We will show that the pair $(L_N,N)$ gives rise to a Quillen
equivalence $(L_N,\phi^*N)$ on the level of commutative symmetric ring
spetra. }
\begin{lem} \label{lem:sym}
The Quillen pair $(L_N,N)$ satisfies
{$$ L_N(\mathrm{Sym} X_*) \cong \mathrm{Sym}(\Gamma(X_*))$$
for all non-negatively graded chain complexes $X_*$.}
\end{lem}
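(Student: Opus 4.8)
The plan is to deduce the identity from the principle that left adjoints preserve free objects, combined with uniqueness of adjoints; this avoids unwinding the explicit construction of $L_N$ from~\cite[6.4]{R}. First I would record that, as a commutative monoid in $\sigmadg$, the symmetric sequence $\mathrm{Sym}(C_*)$ (with $n$th level $C_*^{\otimes n}$) is the \emph{free} commutative monoid on $G_1(C_*)$; writing $\mathbb{P}\colon\sigmadg\to\comsigmadg$ for the free commutative monoid functor, this says $\mathrm{Sym}(C_*)\cong\mathbb{P}(G_1 C_*)$, and it is implicit in the description of symmetric spectra recalled in Section~\ref{sec:background} (where $\mathrm{Sym}(K)$ appears as the monoidal unit) --- concretely it follows from the formula for $\odot$. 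Likewise $\mathrm{Sym}(\Gamma C_*)\cong\mathbb{P}'(G_1\Gamma C_*)$, where $\mathbb{P}'\colon\sigmas\to\comsigmas$ is the corresponding functor. Since $\Gamma$ preserves zero objects, applying $\Gamma$ levelwise to $G_1(C_*)$ gives $G_1(\Gamma C_*)$, i.e. $\Gamma\circ G_1\cong G_1\circ\Gamma$. Hence the lemma reduces to the natural isomorphism
$$ L_N\circ\mathbb{P}\;\cong\;\mathbb{P}'\circ\Gamma\colon\sigmadg\lra\comsigmas. $$

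Both sides here are left adjoints: $\Gamma$ is part of an equivalence of categories and hence a left adjoint, $\mathbb{P}$ and $\mathbb{P}'$ are left adjoint to the respective forgetful functors, and $L_N$ is left adjoint to $N$ by Theorem~\ref{thm:dkpos}. So it is enough to produce a natural isomorphism of their right adjoints. The right adjoint of $L_N\circ\mathbb{P}$ is $U\circ N$, where $U\colon\comsigmadg\to\sigmadg$ is the forgetful functor and $N\colon\comsigmas\to\comsigmadg$ is the normalization functor on commutative monoids; the right adjoint of $\mathbb{P}'\circ\Gamma$ is $N\circ U'$, where $U'\colon\comsigmas\to\sigmas$ is the forgetful functor and $N\colon\sigmas\to\sigmadg$ is here the levelwise normalization (the right adjoint of the equivalence $\Gamma$). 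So everything comes down to the natural isomorphism $U\circ N\cong N\circ U'$ of functors $\comsigmas\to\sigmadg$.

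That last isomorphism is precisely the assertion that $N$ on commutative monoids does not change the underlying symmetric sequence: by construction it normalizes a commutative monoid in $\sigmas$ in each simplicial degree and transports the multiplication along the symmetric shuffle (Eilenberg--Zilber) transformation, so the underlying symmetric sequence of $N(A)$ is the levelwise normalization of the underlying symmetric sequence of $A$. Granting this, uniqueness of left adjoints gives $L_N\circ\mathbb{P}\cong\mathbb{P}'\circ\Gamma$, and evaluating at $G_1(C_*)$ yields
$$ L_N(\mathrm{Sym}\,C_*)=L_N\bigl(\mathbb{P}(G_1 C_*)\bigr)\cong\mathbb{P}'\bigl(\Gamma(G_1 C_*)\bigr)\cong\mathbb{P}'\bigl(G_1(\Gamma C_*)\bigr)\cong\mathrm{Sym}(\Gamma C_*), $$
naturally in $C_*$. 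The only non-formal ingredient is the identification $U\circ N\cong N\circ U'$, and since it is immediate from how $N$ is defined on commutative monoids I do not expect a real obstacle; the two points demanding care are to keep straight which Dold--Kan functor ($N$ on monoids, $N$ levelwise, $\Gamma$ levelwise) is meant at each step, and to check that the identifications $\mathrm{Sym}(-)\cong\mathbb{P}(G_1(-))$ are compatible with the commutative monoid structures.
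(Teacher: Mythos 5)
Your argument is correct and follows essentially the same route as the paper: identify $\mathrm{Sym}(C_*)$ with the free commutative monoid $C(G_1C_*)$, commute $\Gamma$ with $G_1$, and use that $L_N$ sends free commutative monoids on $X$ to free commutative monoids on $\Gamma(X)$. The only difference is that where the paper justifies this last compatibility ``by definition of $L_N$'' (via its construction in \cite[6.4]{R}), you derive it formally from uniqueness of left adjoints together with the identification $U\circ N\cong N\circ U'$, which indeed holds since $N$ on commutative monoids is the levelwise normalization with multiplication transported along the shuffle map.
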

\begin{proof}
We can identify {$\mathrm{Sym}(C_*)$} 
with the free commutative monoid generated
by {$G_1X_*$, $C(G_1X_*)$.} 
Then, by definition of $L_N$ we obtain
{$$ L_N(C(G_1X_*)) \cong C(\Gamma(G_1X_*)) \cong C(G_1\Gamma(X_*)) \cong
\mathrm{Sym}(\Gamma(X_*)).$$}
\end{proof}

Let $\mathcal{C}$ be a category and let $A$ be an object of $\mathcal{C}$. Then
we denote by  $A \downarrow \mathcal{C}$ the category of objects under
$A$. 

\begin{cor} \label{cor:under1}
Let $ \comsigmadg$ and $\comsigmas$  carry the positive model category
structures 
and consider the induced model structures on the categories under a
specific object. Then the model categories
$\mathrm{Sym}(\Z[0]) \downarrow  \comsigmadg $ and
$ \mathrm{Sym}(\underline{\mathbb{Z}}) \downarrow \comsigmas$ are Quillen
equivalent.
\end{cor}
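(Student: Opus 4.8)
We deduce the corollary from Theorem~\ref{thm:dkpos} and Lemma~\ref{lem:sym} by transporting a Quillen equivalence to coslice categories. Recall first that for an object $A$ of a model category $\mathcal{M}$ the coslice category $A\downarrow\mathcal{M}$ carries the model structure in which a morphism is a weak equivalence, fibration, or cofibration exactly when the underlying $\mathcal{M}$-morphism is; this is the ``induced model structure'' referred to in the statement. Any Quillen adjunction $L\colon\mathcal{M}\rightleftarrows\mathcal{N}\colon R$ then induces a Quillen adjunction
$$\tilde L\colon A\downarrow\mathcal{M}\rightleftarrows LA\downarrow\mathcal{N}\colon\tilde R,$$
where $\tilde L$ sends $(A\xrightarrow{f}X)$ to $(LA\xrightarrow{Lf}LX)$ and $\tilde R$ sends $(LA\xrightarrow{g}Y)$ to the composite $A\xrightarrow{\eta_A}RLA\xrightarrow{Rg}RY$ of the unit $\eta_A$ with $Rg$; a routine check with the adjunction isomorphism shows $\tilde L\dashv\tilde R$, and it is a Quillen pair because all three classes of maps in a coslice category are detected by the forgetful functor while $\tilde L$ and $\tilde R$ act as $L$ and $R$ on underlying objects.

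Now apply Lemma~\ref{lem:sym} with $C_*=\Z[0]$. Since the Dold--Kan inverse sends $\Z[0]$ to the constant simplicial abelian group $\underline{\Z}$ (equivalently, $N(\underline{\Z})=\Z[0]$), we obtain a natural isomorphism $L_N(\mathrm{Sym}(\Z[0]))\cong\mathrm{Sym}(\Gamma(\Z[0]))=\mathrm{Sym}(\underline{\Z})$ in $\comsigmas$. Moreover $\mathrm{Sym}(\Z[0])$ is cofibrant in $\comsigmadg$: identifying it with the free commutative monoid $C(G_1\Z[0])$ as in Lemma~\ref{lem:sym}, the chain complex $\Z[0]$ is cofibrant in $\chab$, hence $G_1\Z[0]$ is cofibrant in $\sigmadg$ for the positive model structure ($G_1$ is left Quillen, its right adjoint $\mathit{Ev}_1$ being right Quillen, or simply because $G_1$ carries the generating cofibrations of $\chab$ to generating cofibrations of $\sigmadg$), and therefore $C(G_1\Z[0])$ is cofibrant in $\comsigmadg$ because the free commutative monoid functor $C\colon\sigmadg\to\comsigmadg$ is left adjoint to the forgetful functor, hence left Quillen. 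Taking $\mathcal{M}=\comsigmadg$, $\mathcal{N}=\comsigmas$, $L=L_N$, $R=N$ and $A=\mathrm{Sym}(\Z[0])$ in the first paragraph, and using the isomorphism $L_NA\cong\mathrm{Sym}(\underline{\Z})$, we obtain a Quillen adjunction between $\mathrm{Sym}(\Z[0])\downarrow\comsigmadg$ and $\mathrm{Sym}(\underline{\Z})\downarrow\comsigmas$.

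It remains to check that this adjunction is a Quillen equivalence. By the standard criterion it suffices to show that for every cofibrant object $(A\xrightarrow{i}X)$ of $\mathrm{Sym}(\Z[0])\downarrow\comsigmadg$ and every fibrant object $(L_NA\xrightarrow{p}Y)$ of $\mathrm{Sym}(\underline{\Z})\downarrow\comsigmas$, a morphism $\tilde L(A\xrightarrow{i}X)\to(L_NA\xrightarrow{p}Y)$ is a weak equivalence if and only if its adjunct $(A\xrightarrow{i}X)\to\tilde R(L_NA\xrightarrow{p}Y)$ is. On underlying objects these conditions say that $L_NX\to Y$, respectively $X\to NY$, is a weak equivalence; since $A=\mathrm{Sym}(\Z[0])$ is cofibrant and $i$ is a cofibration, $X$ is cofibrant in $\comsigmadg$, and $Y$ is fibrant in $\comsigmas$ (every object is, as noted in the proof of Theorem~\ref{thm:dkpos}). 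Thus the equivalence of the two conditions is precisely the assertion that $(L_N,N)$ is a Quillen equivalence, which is Theorem~\ref{thm:dkpos}. The one point requiring care is the cofibrancy of $\mathrm{Sym}(\Z[0])$ in $\comsigmadg$: without it, cofibrant objects of the coslice would not have cofibrant underlying objects and the transported adjunction would not obviously be a Quillen equivalence. Everything else is the formal compatibility of Quillen equivalences with the coslice construction.
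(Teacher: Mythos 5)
Your proof is correct and follows essentially the same route as the paper: identify $L_N\mathrm{Sym}(\Z[0]) \cong \mathrm{Sym}(\underline{\mathbb{Z}})$ via Lemma~\ref{lem:sym} and transport the Quillen equivalence of Theorem~\ref{thm:dkpos} to the categories under these objects, where all structure is created in the ambient categories. The only real difference is that you make explicit the cofibrancy of $\mathrm{Sym}(\Z[0])$ in $\comsigmadg$ (so that cofibrant objects of the coslice have cofibrant underlying objects), a point the paper's shorter argument leaves implicit.
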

\begin{proof}
By Lemma \ref{lem:sym} we know that
$$L_N\mathrm{Sym}(\Z[0]) \cong \mathrm{Sym}(\underline{\mathbb{Z}}).$$
A direct calculation shows that
$N(\mathrm{Sym}(\underline{\mathbb{Z}}))$ is isomorphic 
to $\mathrm{Sym}(\Z[0])$. Therefore the Quillen equivalence
$(L_N,N)$ passes to a Quillen adjunction on the under categories. As the
classes of fibrations, weak equivalences and cofibrations in the under
categories are determined by the ones in the ambient category, this adjunction
is a Quillen equivalence.
\end{proof}
Note that {there is an isomorphism of categories between the
  category of} commutative monoids in $\Sp^\Sigma(\sab,
\tilde{\Z}(\mathbb{S}^1))$ {and the category}  
$\mathrm{Sym}(\tilde{\Z}(\mathbb{S}^1)) \downarrow
\comsigmas$. {A similar isomorphism of categories compares}  
commutative monoids in $\Sp^\Sigma(\chab, \Z[1])$ 
{and objects in} $\mathrm{Sym}(\Z[1]) \downarrow
\comsigmadg$. We can extend the 
Quillen equivalence from  \ref{cor:under1} to these under
categories. Recall from \cite[p.~358]{s-alg} that $\mathcal{N}$ is the
symmetric sequence in chain complexes with 
$N(\tilde{\Z}(\mathbb{S}^\ell))$ in level $\ell$. We denote by
$\mathbbm{1}$ the unit of the symmetric monoidal category
$\sigmadg$. This is the symmetric sequence with $\Z[0]$ in level zero
and zero in all positive levels.
\begin{prop} \label{prop:under2}
The functors {$(L_N,\Phi^*N)$} 
induce a Quillen equivalence on the model categories
$ \mathrm{Sym}(\Z[1]) \downarrow  \comsigmadg$ and
$ \mathrm{Sym}(\tilde{\Z}(\mathbb{S}^1))  \downarrow \comsigmas$ where
$ \comsigmadg$ 
and $\comsigmas$ carry the positive model structures. {Here, $\Phi^*$
is a suitable change-of-rings functor.} 
\end{prop}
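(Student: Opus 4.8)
The plan is to reduce Proposition~\ref{prop:under2} to Corollary~\ref{cor:under1} by recognizing that the relevant under-categories are again under-categories of $\comsigmadg$ and $\comsigmas$, just under a larger object. Concretely, $\mathrm{Sym}(\Z[1])$ is a commutative monoid in $\sigmadg$ under the unit $\mathbbm{1} = \mathrm{Sym}(\Z[0])$ (the unit map $\mathbbm{1} \to \mathrm{Sym}(\Z[1])$ is the inclusion of the degree-zero summand), so the category $\mathrm{Sym}(\Z[1]) \downarrow \comsigmadg$ is canonically the category of objects of $\mathrm{Sym}(\Z[0]) \downarrow \comsigmadg$ equipped with a further map from $\mathrm{Sym}(\Z[1])$; equivalently it is the under-category of the object $\mathrm{Sym}(\Z[1])$ inside $\mathrm{Sym}(\Z[0]) \downarrow \comsigmadg$. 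The same holds on the simplicial side with $\mathrm{Sym}(\underline{\Z}) \to \mathrm{Sym}(\tilde{\Z}(\mathbb{S}^1))$. So the strategy is: (1) identify both sides as under-categories inside the Quillen-equivalent pair of Corollary~\ref{cor:under1}; (2) check that the Quillen equivalence $(L_N,N)$ of that corollary carries the distinguished object $\mathrm{Sym}(\Z[1])$ to (something weakly equivalent to, and in fact isomorphic to) $\mathrm{Sym}(\tilde{\Z}(\mathbb{S}^1))$, compatibly with the maps from $\mathrm{Sym}(\Z[0])$ resp.\ $\mathrm{Sym}(\underline{\Z})$; (3) invoke the general fact that a Quillen equivalence $\mathcal{C} \rightleftarrows \mathcal{D}$ descends to a Quillen equivalence $A \downarrow \mathcal{C} \rightleftarrows LA \downarrow \mathcal{D}$ whenever the derived unit/counit at $A$ is an equivalence, exactly as was used in the proof of Corollary~\ref{cor:under1}.

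The key computational input for step (2) is Lemma~\ref{lem:sym}, which gives $L_N(\mathrm{Sym}(C_*)) \cong \mathrm{Sym}(\Gamma(C_*))$ for every non-negatively graded chain complex $C_*$. Applying this to $C_* = \Z[1]$ yields $L_N(\mathrm{Sym}(\Z[1])) \cong \mathrm{Sym}(\Gamma(\Z[1]))$, and $\Gamma(\Z[1]) = \tilde{\Z}(\mathbb{S}^1)$ by definition of $\Gamma$ as the inverse normalization (the normalization of the reduced free simplicial abelian group on $\mathbb{S}^1$ is $\Z[1]$, so $\Gamma$ sends $\Z[1]$ back to $\tilde{\Z}(\mathbb{S}^1)$). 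Hence $L_N(\mathrm{Sym}(\Z[1])) \cong \mathrm{Sym}(\tilde{\Z}(\mathbb{S}^1))$ on the nose. Since $\mathrm{Sym}(\Z[1])$ is a cofibrant object of $\comsigmadg$ in the positive model structure (being a free commutative monoid on a positively-supported cofibrant symmetric sequence; compare the discussion of pointed objects and Lemma~\ref{lem:sym}), the derived unit $\mathrm{Sym}(\Z[1]) \to NL_N(\mathrm{Sym}(\Z[1]))$ agrees with the actual unit, which is a weak equivalence by Theorem~\ref{thm:dkpos}. Moreover, naturality of $L_N$ applied to the unit map $\mathrm{Sym}(\Z[0]) \to \mathrm{Sym}(\Z[1])$ shows that the isomorphism $L_N(\mathrm{Sym}(\Z[1])) \cong \mathrm{Sym}(\tilde{\Z}(\mathbb{S}^1))$ is compatible with the maps out of $L_N(\mathrm{Sym}(\Z[0])) \cong \mathrm{Sym}(\underline{\Z})$.

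With these identifications in hand, step (3) is formal: the fibrations, weak equivalences, and cofibrations in $\mathrm{Sym}(\Z[1]) \downarrow \comsigmadg$ and in $\mathrm{Sym}(\tilde{\Z}(\mathbb{S}^1)) \downarrow \comsigmas$ are created by the forgetful functors to $\comsigmadg$ and $\comsigmas$, so the adjunction $(L_N,N)$ restricted to these under-categories is automatically a Quillen adjunction, and it is a Quillen equivalence because on the full categories it is one and the under-category structure does not change the weak equivalences. This is exactly the argument already spelled out at the end of the proof of Corollary~\ref{cor:under1}, so the proof will largely consist of citing that corollary and Lemma~\ref{lem:sym} and verifying the identification $\Gamma(\Z[1]) \cong \tilde{\Z}(\mathbb{S}^1)$.

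The main obstacle is making precise the claim that passing to an under-category of an under-category causes no trouble, and in particular that the distinguished object $\mathrm{Sym}(\Z[1])$ behaves well under $L_N$ — i.e.\ that one really does get an \emph{isomorphism} rather than merely a weak equivalence, so that the two under-categories can be matched up strictly. Lemma~\ref{lem:sym} delivers precisely this, which is why it was stated; the remaining care is bookkeeping about which model structures are being induced on the nested under-categories and checking that the symmetric sequence $\mathcal{N}$ (with $N(\tilde{\Z}(\mathbb{S}^\ell))$ in level $\ell$) does not actually intervene here — it is only the levels relevant to $\mathrm{Sym}$ of a single generator, namely level one, that matter, and there $N(\tilde{\Z}(\mathbb{S}^1)) = \Z[1]$ by construction.
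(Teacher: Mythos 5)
Your effective argument does work, and in substance it runs parallel to the paper's proof: both hinge on Lemma~\ref{lem:sym} (giving $L_N(\mathrm{Sym}(\Z[1]))\cong \mathrm{Sym}(\tilde{\Z}(\mathbb{S}^1))$, using $\Gamma(\Z[1])\cong\tilde{\Z}(\mathbb{S}^1)$) and on a cofibrancy observation that reduces the derived-unit condition to Theorem~\ref{thm:dkpos}. The difference is packaging. The paper does not invoke a general principle about under-categories; it identifies the induced right adjoint explicitly as $\Phi^*N$, where $\Phi\colon \mathrm{Sym}(\Z[1])\to\mathcal{N}\cong NL_N\mathrm{Sym}(\Z[1])$ is the shuffle map, notes that $N$ and $\Phi^*$ preserve and detect weak equivalences, and checks the unit on a cofibrant $\alpha\colon\mathrm{Sym}(\Z[1])\to A$ by observing that $\alpha(0)$ is an isomorphism, so the underlying commutative monoid $\gamma^*A$ is positively cofibrant and Theorem~\ref{thm:dkpos} applies. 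Your route absorbs exactly these steps into the general fact that a Quillen equivalence induces a Quillen equivalence on under-categories of a \emph{cofibrant} base object (cofibrant objects under a cofibrant base are underlying cofibrant, every object of $\comsigmas$ is fibrant, and weak equivalences are underlying); granting that lemma, your proof is shorter, while the paper's version makes visible that the comparison of base objects is only a weak equivalence, not an isomorphism.

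Two concrete corrections are needed. First, $\mathrm{Sym}(\Z[0])$ is \emph{not} the unit $\mathbbm{1}$ of $\sigmadg$: the unit is concentrated in level zero, whereas $\mathrm{Sym}(\Z[0])$ has $\Z[0]$ in every level, and there is no ``inclusion of the degree-zero summand'' map of commutative monoids $\mathrm{Sym}(\Z[0])\to\mathrm{Sym}(\Z[1])$ (any algebra map is determined by where the level-one generator $\Z[0]$ goes in $\mathrm{Sym}(\Z[1])(1)=\Z[1]$, hence is zero there). So the proposed nesting inside Corollary~\ref{cor:under1} is ill-founded as stated, and it is also unnecessary and counterproductive: to run your general fact inside the equivalence of Corollary~\ref{cor:under1} you would need the structure map $\mathrm{Sym}(\Z[0])\to\mathrm{Sym}(\Z[1])$ to be a cofibration of commutative monoids, which it is not. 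Simply apply the under-category construction directly to the Quillen equivalence of Theorem~\ref{thm:dkpos} at the cofibrant object $\mathrm{Sym}(\Z[1])$ (cofibrancy as in the proof of Lemma~\ref{lem:cofcof}). Second, the correct hypothesis for the general fact is cofibrancy of the base object (which you do verify), not merely that the derived unit at it is an equivalence; and, contrary to your closing remark, $\mathcal{N}$ genuinely intervenes: $N$ is only lax symmetric monoidal and $NL_N\mathrm{Sym}(\Z[1])\cong\mathcal{N}\neq\mathrm{Sym}(\Z[1])$ (already $N\tilde{\Z}(\mathbb{S}^2)\neq\Z[2]$), so the induced right adjoint is $N$ followed by restriction along $\Phi$. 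This is also why the short argument of Corollary~\ref{cor:under1}, which rests on the special isomorphism $N(\mathrm{Sym}(\underline{\mathbb{Z}}))\cong\mathrm{Sym}(\Z[0])$, is not ``exactly the argument'' available here; in the present case the matching of base objects holds only up to the weak equivalence $\Phi$, which is precisely what the paper's $\Phi^*,\gamma^*$ bookkeeping (or, in your version, the unit-built-into-the-right-adjoint in the general lemma) is handling.
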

\begin{proof}
As $\Gamma(\Z[1])$ is isomorphic to $\tilde{\Z}(\mathbb{S}^1)$ we obtain with
Lemma  \ref{lem:sym} that
$$L_N(\mathrm{Sym}(\Z[1]))\cong \mathrm{Sym}(\tilde{\Z}(\mathbb{S}^1)).$$

Therefore, if 
{$A$ is an object in $\mathrm{Sym}(\Z[1]) \downarrow
  \comsigmadg$,} 
then $L_N(A)$ 
is an object of $ \mathrm{Sym}(\tilde{\Z}(\mathbb{S}^1)) \downarrow \comsigmas$.
{We consider the functors}  
{
$$\xymatrix{
 {\mathrm{Sym}(\Z[1]) \downarrow  \comsigmadg} \ar[rr]^{L_N} & &
 {\mathrm{Sym}(\tilde{\Z}(\mathbb{S}^1))  \downarrow \comsigmas} \ar[dl]^N  \\
  & {\mathcal{N} \downarrow  \comsigmadg 
} \ar[ul]^{\Phi^*} &
 }$$}
where $\Phi\colon \mathrm{Sym}(\Z[1]) \ra \mathcal{N}$ is induced by the
shuffle transformation (see \cite[p.~358]{s-alg}) and $\Phi^*$ is the
associated change-of-rings map. Note that
$NL_N\mathrm{Sym}\Z[1] \cong \mathcal{N}$. Both functors $N$ and $\Phi^*$
preserve and detect {level and stable} weak equivalences
\cite[proof of 4.4]{s-alg}, {therefore they preserve and detect
  positive weak equivalences} and hence 
it suffices to show that
$$ A \ra \Phi^*NL_NA$$
is a weak equivalence in
$ \mathrm{Sym}(\Z[1]) \downarrow  \comsigmadg$ for all cofibrant objects
$\alpha\colon \mathrm{Sym}(\Z[1]) \ra A$.
There is a map of commutative monoids $\gamma \colon \mathbbm{1} \ra
\mathrm{Sym}(\Z[1])$
which is given by the identity in level zero and by the zero map in higher
levels. Let $\gamma^*$ be the associated change-of-rings
{functor: } 
$$\xymatrix{
{\mathbbm{1}} \ar[r]^(0.4)\gamma & {\mathrm{Sym}(\Z[1])} \ar[d]_\Phi
\ar[r]^\alpha &
{A} \ar[d]^{{\eta_A}} \\
 & {\mathcal{N}} \ar[r]^{NL_N\alpha} & {NL_NA}
}$$
{Note that $\eta_A \circ \alpha \circ \gamma = NL_N\alpha \circ
\Phi \circ \gamma$.} 
As $A$ is cofibrant, $\alpha(0)\colon \Z[0]=\mathrm{Sym}(\Z[1])(0) \ra A(0)$
is an isomorphism. Therefore $\gamma^*(A)$ is positively cofibrant as an object
in  $ \comsigmadg$. Hence we know that the map
$$ \gamma^*(A) \ra {\gamma^* \Phi^*NL_N(A)}
$$
is a positive weak equivalence in $ \comsigmadg$, \ie, a level equivalence in
all positive levels (it is also a weak equivalence in level
zero). {As $\gamma^*$ is the identity on objects} 
and only changes the module structure
we get that
$$ A \ra {\Phi}^*NL_N(A)$$
is a level equivalence in $\mathrm{Sym}(\Z[1])\downarrow  \comsigmadg$.
\end{proof}
\begin{rem}
With the positive model structure, $\comsigmadg$  is not left proper.
Consider for instance the map
$CG_r(0)= \mathbbm{1} \ra CG_r(\Z[0])$. This map is a cofibration for
positive $r$ 
in the positive model structure. On the other hand,
take the projection map from $\Z$ to $\Z/2\Z$. This yields a map $\pi$
in $\comsigmadg$ from the initial object $\mathbbm{1}$ to
$\mathbbm{1}/2\mathbbm{1}$ (where the latter object is
concentrated in level zero with value $\Z/2\Z[0]$).  As we
work in the positive model structure, this map is actually a weak equivalence.
If we push out $\pi$ along the cofibration $\mathbbm{1} \ra
CG_r(\Z[0])$ we get
$$g\colon  CG_r(\Z[0]) \ra  CG_r(\Z[0]) \odot \mathbbm{1}/2\mathbbm{1}. $$
In level $r$ this is the chain map
$$g(r)\colon G_r(\Z[0])(r) \cong \Z[\Sigma_r] \otimes \Z[0] \cong
\Z[\Sigma_r][0] \ra
\Z[\Sigma_r] \otimes \Z[0] \otimes \Z/2\Z[0] \cong \Z/2\Z[\Sigma_r][0].$$
Therefore we do not get an isomorphism for positive $r$ and the pushout of
the weak equivalence $\pi$  is not a weak equivalence.
\end{rem}

We want to transfer our results to a comparison of commutative monoids in
symmetric spectra of simplicial abelian groups and non-negatively graded chain
complexes where we consider the positive stable model structure.

\begin{lem} \label{lem:cofcof}
Cofibrant objects in $C(\Sp^\Sigma(\chab, \Z[1]))$ in the positive stable model
structure are cofibrant in  $\comsigmadg$.
\end{lem}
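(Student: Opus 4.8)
The plan is to compare the two relevant model structures through their generating cofibrations and to use the Quillen equivalence of Proposition \ref{prop:under2}. The positive stable model structure on $C(\Sp^\Sigma(\chab,\Z[1]))$ is a left Bousfield localization of the positive level model structure on $C(\Sp^\Sigma(\chab,\Z[1]))$, so cofibrations in the stable structure coincide with cofibrations in the positive level structure (localization does not change the cofibrations). Thus it suffices to show that every (positive level) cofibrant object in $C(\Sp^\Sigma(\chab,\Z[1]))$ is cofibrant in $\comsigmadg$ with its positive model structure.

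First I would recall that $C(\Sp^\Sigma(\chab,\Z[1]))$ is identified with the under-category $\mathrm{Sym}(\Z[1]) \downarrow \comsigmadg$, as noted just before Proposition \ref{prop:under2}. Under this identification, a cofibrant object is a retract of a $\mathrm{Sym}(\Z[1])$-cell complex, i.e.\ of an object built from $\mathrm{Sym}(\Z[1])$ by iterated pushouts along the generating cofibrations of the positive stable structure on $C(\Sp^\Sigma(\chab,\Z[1]))$; by the previous paragraph we may take these generating cofibrations to be $C(F_r(i))$ for $r$ positive and $i$ a generating cofibration of $\chab$ (here using that the free commutative monoid functor commutes with $F_r$, so applying $C$ to the generating cofibrations $F_r(i)$ of the positive level structure on $\Sp^\Sigma(\chab,\Z[1])$ gives the generators of $C(\Sp^\Sigma(\chab,\Z[1]))$). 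The key point is then that $\mathrm{Sym}(\Z[1])$ is itself cofibrant in the positive model structure on $\comsigmadg$: indeed $\mathrm{Sym}(\Z[1]) = C(G_1(\Z[1]))$ is the free commutative monoid on $G_1(\Z[1])$, and $G_1(\Z[1])$ is a positive cofibrant symmetric sequence (it is concentrated in level $1$), so $\mathrm{Sym}(\Z[1]) = C(G_1(\Z[1]))$ is obtained from the initial commutative monoid $\mathbbm{1}$ by attaching a cell $C(G_1(0)) \to C(G_1(\Z[1]))$, hence is cofibrant in $\comsigmadg$. Here one uses that a positive cofibration in $\comsigmadg$, being an isomorphism in level zero, starts from $\mathbbm{1}$, which is the initial object of $\comsigmadg$.

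Next I would observe that the forgetful functor from the under-category $\mathrm{Sym}(\Z[1]) \downarrow \comsigmadg$ to $\comsigmadg$ sends the generating cofibrations $C(F_r(i))$ of the under-category (which, as maps of commutative monoids, are of the form $C(\Z[1] \text{-mod structure applied to } F_r(i))$) to maps that are cofibrations of the ambient $\comsigmadg$: this is because in the under-category model structure transferred along the adjunction, cofibrations are generated by $\mathrm{Sym}(\Z[1]) \odot_{\mathrm{Sym}(\Z[1])}(-)$ applied to the generating cofibrations $C(G_r(i))$ of $\comsigmadg$, i.e.\ they are the images of the $C(G_r(i))$ under extension of scalars along $\mathbbm{1} \to \mathrm{Sym}(\Z[1])$. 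Since this extension-of-scalars functor is a left Quillen functor from $\comsigmadg$ to the under-category and its left adjoint is the forgetful functor, it follows formally that a cofibrant object of the under-category, being a retract of a cell complex built from $\mathrm{Sym}(\Z[1])$ by such cells, is a retract of an object built from the cofibrant object $\mathrm{Sym}(\Z[1])$ of $\comsigmadg$ by attaching $\comsigmadg$-cofibration cells; hence it is cofibrant in $\comsigmadg$. Concretely: a cofibration in $\mathrm{Sym}(\Z[1]) \downarrow \comsigmadg$, forgotten to $\comsigmadg$, is a cofibration of $\comsigmadg$ (the transferred cofibrations are a subclass of, not all of, the ambient cofibrations), and $\mathrm{Sym}(\Z[1])$ is $\comsigmadg$-cofibrant, so the composite $\mathbbm{1} \to \mathrm{Sym}(\Z[1]) \to (\text{cell complex})$ is a $\comsigmadg$-cofibration and its target is $\comsigmadg$-cofibrant; retracts of cofibrant objects are cofibrant.

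The main obstacle, and the place where care is needed, is verifying that the forgetful functor $\mathrm{Sym}(\Z[1])\downarrow\comsigmadg \to \comsigmadg$ really does carry the transferred (generating) cofibrations to $\comsigmadg$-cofibrations — equivalently, that extension of scalars along $\mathbbm{1}\to\mathrm{Sym}(\Z[1])$ is left Quillen for the positive model structures. This is where one uses that $\mathrm{Sym}(\Z[1])$ is a \emph{cofibrant} commutative monoid (so that $\mathrm{Sym}(\Z[1])\odot(-)$ behaves well on cofibrations), together with the explicit description of the model structures on $C(\sigmadg)$ via the generators $C(G_r(i))$ with $r$ positive from the lemma preceding Definition \ref{defn:positivemodelcomm}; the pushout-product and h-cofibration bookkeeping from Proposition \ref{prop.h.list} is the technical input that makes $\mathrm{Sym}(\Z[1])\odot(-)$ land in cofibrations. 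Once that is in place, the rest is the formal retract-of-a-cell-complex argument above.
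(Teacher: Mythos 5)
Your argument is essentially the paper's own proof: you get cofibrancy of $\mathrm{Sym}(\Z[1])$ in $\comsigmadg$ by identifying its unit with $C(G_1(0\to \Z[1]))$, you note that the generating cofibrations $\mathrm{Sym}(\Z[1])\odot C(G_m(i_n))$ forget to cofibrations in $\comsigmadg$ because $\odot$ is the coproduct of commutative monoids and $\mathrm{Sym}(\Z[1])$ is cofibrant, and you conclude by the same cell-induction, using that pushouts and sequential colimits in $C(\Sp^\Sigma(\chab,\Z[1]))$ are computed in $\comsigmadg$. Two slips that do not affect the substance: the forgetful functor to $\comsigmadg$ is the \emph{right} adjoint of $\mathrm{Sym}(\Z[1])\odot(-)$, not its left adjoint, and the technical input is not Proposition~\ref{prop.h.list} (which concerns h-cofibrations in $\Sp^\Sigma(\CH)$) but simply that coproduct with a fixed object is a cobase change and hence preserves cofibrations, exactly as in the paper's proof.
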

\begin{proof}
We can express the map $\mathbbm{1} \ra \mathrm{Sym}(\Z[1])$ as
$$ \mathbbm{1} \cong C(G_1(0)) \ra C(G_1(\Z[1])) = \mathrm{Sym}(\Z[1]).$$
Therefore the unit of $\mathrm{Sym}(\Z[1])$ is $C(G_1(i))$ with
$i\colon 0 \ra \Z[1]$ and hence it is a
cofibration and therefore the initial object $\mathrm{Sym}(\Z[1])$ of
$C(\Sp^\Sigma(\chab, \Z[1]))$
 is cofibrant in $\comsigmadg$

{As usual, let $\mathbb{S}^{n}$ denote the chain complex whose
  only non-trivial chain group is $\Z$ in degree $n$ and let
  $\mathbb{D}^n$ denote the chain complex with $\mathbb{D}^n_n =
  \mathbb{D}^n_{n-1}= \Z$ and $\mathbb{D}^n_i=0$ for all $i \neq n,
  n-1$ whose only non-trivial boundary map is the identity.} 
The cofibrant generators of the positive stable model structure are the maps

\begin{equation} \label{eq:generators}
\xymatrix@1{{\mathrm{Sym}(\Z[1]) \odot G_m(\mathbb{S}^{n-1})}
\ar[rrrr]^{\mathrm{Sym}(\Z[1]) \odot G_m(i_n)} & &&&
{\mathrm{Sym}(\Z[1]) \odot G_m(\mathbb{D}^{n})} }
\end{equation}
where $i_n$ is the cofibration of chain complexes $i_n \colon \mathbb{S}^{n-1}
\ra \mathbb{D}^{n}$ and $m \geq 1$. The $\odot$-product is the
coproduct in the category 
$\comsigmadg$ and therefore the map $\mathrm{Sym}(\Z[1]) \odot G_m(i_n)$ is the
coproduct of the identity map on $\mathrm{Sym}(\Z[1])$ and the map $G_m(i_n)$
and hence a cofibration in $\comsigmadg$.

Coproducts of generators as in \eqref{eq:generators} are cofibrations in
$\comsigmadg$ as well,
because the coproduct in $C(\Sp^\Sigma(\chab, \Z[1]))$ is given by the
$\odot_{\mathrm{Sym}(\Z[1])}$-product.

Every cofibrant object is a retract of a cell-object and these are sequential
colimits of pushout diagrams of the form
$$\xymatrix{
{X} \ar[d]_f \ar[r] & {A^{(n)}} \ar@{.>}[d] \\
{Y} \ar@{.>}[r]& {A^{(n+1)}}
}$$
where $f$ is a coproduct of maps like in \eqref{eq:generators} and $A^{(n)}$
is inductively constructed such that $A^{(0)}$ is $\mathrm{Sym}(\Z[1]))$. We
can inductively  assume that $X,Y$ and $A^{(n)}$ are cofibrant in
$\comsigmadg$. The pushout in $C(\Sp^\Sigma(\chab, \Z[1]))$ is the pushout in
$\comsigmadg$ and hence the pushout $A^{(n+1)}$ is cofibrant in  $\comsigmadg$
as well. Sequential colimits and retracts of cofibrant objects are cofibrant.
\end{proof}

\begin{thm} \label{thm:qecommspectrasabchab}
The Quillen pair $(L_N,{\Phi^*}N)$ induces a Quillen equivalence between
$C(\Sp^\Sigma(\chab, \Z[1]))$ and $C(\Sp^\Sigma(\sab, \tilde{\Z}(\mathbb{S}^1)))$
with the model structure{s} that {are} right-induced
from the positive 
stable model structures on the underlying categories of symmetric spectra.
\end{thm}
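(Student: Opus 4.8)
The plan is to reduce the statement, via the forgetful functors to symmetric spectra, to input that is already available and then to invoke the criterion \cite[A.2 (iii)]{mmss} used in the proof of Theorem~\ref{thm:hzvssab}. Concretely, it suffices to check three things: that $(L_N,N)$ is a Quillen pair for the right-induced positive stable model structures; that the right adjoint $N$ preserves and reflects all weak equivalences; and that the unit $\eta_A\colon A\to NL_NA$ is a weak equivalence for every cofibrant object $A$ of $C(\Sp^\Sigma(\chab,\Z[1]))$.

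For the first point, recall that fibrations and weak equivalences of commutative monoids are created by the forgetful functors $U$ to $\Sp^\Sigma(\chab,\Z[1])^{s,+}$ and $\Sp^\Sigma(\sab,\tilde{\Z}(\mathbb{S}^1))^{s,+}$, and that $U\circ N\cong N\circ U$, both composites being levelwise normalization followed by restriction along the shuffle map $\Phi\colon\mathrm{Sym}(\Z[1])\to\mathcal{N}$. Hence it is enough to know that $(L_N,N)$ is a Quillen pair on the underlying categories of symmetric spectra with their positive stable model structures; this is the positive variant of the Dold-Kan comparison of \cite{s-alg}, the compatibility with the stable Bousfield localization being a consequence of the behaviour of $L_N$ on the free-spectrum functors $F_n$. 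For the second point, a map of commutative monoids is by definition a weak equivalence exactly when it is a positive stable equivalence on underlying symmetric spectra, and levelwise normalization composed with $\Phi^*$ preserves and reflects positive stable equivalences of symmetric spectra; this is precisely the input quoted in the proof of Proposition~\ref{prop:under2} from \cite[proof of 4.4]{s-alg}. Thus $N$ preserves and reflects all weak equivalences of commutative monoids, which is the hypothesis of \cite[A.2 (iii)]{mmss}.

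The third point is where the work of the previous two sections enters. Let $A$ be cofibrant in $C(\Sp^\Sigma(\chab,\Z[1]))$ for the positive stable model structure. By Lemma~\ref{lem:cofcof}, $A$ is cofibrant in $\comsigmadg$ with the positive model structure, and moreover $A$ has $\Z[0]$ in level zero since cofibrations in the positive stable structure are isomorphisms in level zero; these are exactly the properties of $A$ used in the proof of Proposition~\ref{prop:under2}. That proposition therefore applies to $A$ and shows that $\eta_A\colon A\to NL_NA$ is a positive weak equivalence, \ie\ a level equivalence in all positive levels (and also in level zero). Positive level equivalences are in particular positive stable equivalences, so $\eta_A$ is a weak equivalence in the right-induced model structure on $C(\Sp^\Sigma(\sab,\tilde{\Z}(\mathbb{S}^1)))$. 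Combining the three points, \cite[A.2 (iii)]{mmss} yields that $(L_N,N)$ is a Quillen equivalence.

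I expect the main obstacle to be organizational rather than substantive: one must keep straight the several ``positive'' model structures in play --- positive level versus positive stable, on commutative monoids in symmetric sequences versus in symmetric spectra, and in the under-categories of Proposition~\ref{prop:under2} --- so that a cofibrant object of the positive stable structure is genuinely cofibrant where Proposition~\ref{prop:under2} needs it, and so that ``weak equivalence'' consistently means a positive stable equivalence of underlying symmetric spectra. The genuinely non-formal ingredients, namely that levelwise normalization preserves (and not merely reflects) positive stable equivalences and that its derived version is an equivalence on the relevant cofibrant objects, are imported from \cite{s-alg} and Proposition~\ref{prop:under2} rather than reproved here.
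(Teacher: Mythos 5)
Your proposal is correct and follows essentially the same route as the paper: reduce to showing the unit is a weak equivalence on cofibrant objects (using that $N$, composed with $\Phi^*$, preserves and detects weak equivalences as in \cite[proof of 4.4]{s-alg}), invoke Lemma~\ref{lem:cofcof} to get cofibrancy in $\comsigmadg$, and then run the argument of Proposition~\ref{prop:under2} to conclude the unit is a level, hence stable, equivalence. The only cosmetic difference is that you cite \cite[A.2 (iii)]{mmss} explicitly and phrase the last step as ``applying'' Proposition~\ref{prop:under2}, where the paper says ``as in the proof of'' that proposition; since you note exactly the properties of $A$ the proof needs, this is the same argument.
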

\begin{proof}
We have to show that the unit of the adjunction
$$ A \ra \Phi^*NL_NA$$
is a stable equivalence for all cofibrant $A \in C(\Sp^\Sigma(\chab, \Z[1]))$.
Lemma \ref{lem:cofcof} ensures that $A$ is cofibrant as an object in
$\comsigmadg$. Both $A$ and $\Phi^*NL_NA$ receive a unit map from
$\mathrm{Sym}(\Z[1]))$. As in the proof of Proposition \ref{prop:under2} we get
that
$$ \gamma^*A \ra NL_N\gamma^*A$$
is a level equivalence in $\comsigmadg$ and therefore the map $ A \ra \Phi^*NL_NA$ is
a level equivalence in $C(\Sp^\Sigma(\chab, \Z[1]))$ and hence a stable
equivalence.
\end{proof}

\section{Comparison of spectra in bounded and unbounded {chain complexes}}

\label{sec:chCH}

Recall that $\ch$ denotes the category of non-negatively graded chain
complexes and $\CH$ is the category of unbounded chain complexes of
abelian groups. There is a canonical inclusion functor
$i\colon \ch \ra \CH$ and a good truncation functor
$ C_0\colon \CH \ra \ch$
which assigns to an unbounded chain complex $X_*$ the non-negatively
graded chain complex $C_0(X_*)$ with
$$ C_0(X_*)_m = \begin{cases} X_m,  & m > 0,\\
\text{cycles}(X_0), & m = 0.  \end{cases} $$
We denote the induced functors on the corresponding categories of
symmetric spectra again by $i$ and $C_0$.
In this section we consider the Quillen equivalence
$$\xymatrix{i\colon\spec(\ch) \ar@<0.5ex>[rr] & & \spec(\CH) :\! C_0
  \ar@<0.5ex>[ll]  }$$
and show that it extends to a Quillen equivalence of categories of
commutative monoids.  The original Quillen equivalence is established
in~\cite[Proposition 4.9]{s-alg} for the usual stable model
structures.  Here we consider instead the positive stable model
structures from~\cite[\S 14]{mmss} and then consider the right induced model
structures on commutative monoids where $f$ is a weak equivalence or
fibration if it is an underlying positive weak equivalence or
fibration. \bltext{Note that the weak equivalences of
the stable model structure agree with the weak equivalences
of the positive stable model structure in $\spec(\ch, \Z[1])$ and $\spec(\CH, \Z[1])$. 
For this reason the positive and stable model structures are Quillen equivalent; see also~\cite[14.6]{mmss}.
It follows that the Quillen equivalence induced by $i$ and $C_0$ on the usual stable model structures
also induces a Quillen equivalence on the positive stable model structures.} 

\begin{prop}\label{prop.chain.adjoint}
The adjoint functors $i$  and $C_0$ form a Quillen equivalence between
the positive stable model structures on $\spec(\ch, \Z[1])$ and
$\spec(\CH, \Z[1])$.
\end{prop}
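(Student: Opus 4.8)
The plan is to establish the Quillen equivalence between $\spec(\ch,\Z[1])$ and $\spec(\CH,\Z[1])$ in their positive stable model structures by reducing to the already-known Quillen equivalence on the usual (non-positive) stable model structures from~\cite[Proposition 4.9]{s-alg}. First I would check that $(i,C_0)$ is a Quillen adjunction for the positive stable structures: the functor $C_0$ is defined levelwise, and levelwise it preserves fibrations and acyclic fibrations in the projective model structure on $\ch$ since it is right adjoint to the levelwise inclusion $i$ (the inclusion $i\colon\ch\ra\CH$ preserves cofibrations and acyclic cofibrations in the projective structures, a standard fact). Restricting attention to positive levels changes nothing in this levelwise argument, and one checks that $i$ and $C_0$ are compatible with the $\Sym(\Z[1])$-module structures, so the adjunction descends to symmetric spectra and is a Quillen adjunction between the positive \emph{projective} model structures; since the positive stable structures are Bousfield localizations at the sets $\{\zeta_n^{QC}\}$ for positive $n$ and $i$ sends these localizing maps (appropriately) to localizing maps, $(i,C_0)$ remains a Quillen adjunction after localization.

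Next I would verify the Quillen equivalence criterion. Because $C_0$ preserves and detects all positive stable equivalences (stable equivalences in $\spec(\CH)$ are detected on homotopy groups, and $C_0$ does not affect homotopy groups in the relevant range — more precisely $i$ and $C_0$ induce inverse equivalences on the stable homotopy categories as already recorded in~\cite[Proposition 4.9]{s-alg}), it suffices by~\cite[A.2]{mmss} to show that for every positive-cofibrant object $X$ in $\spec(\ch,\Z[1])$, the unit map $X\ra C_0 i X$ is a positive stable equivalence. But $C_0 i X$ is computed levelwise, and $(C_0 i X)_m = X_m$ for $m>0$ while $(C_0 i X)_0 = \mathrm{cycles}(X_0)$; the natural map $X\ra C_0 i X$ is the identity in all positive levels and is the inclusion of cycles in level zero. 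Hence $X\ra C_0 i X$ is a level equivalence in all positive levels, i.e. a positive level equivalence, and therefore a positive stable equivalence. This gives the Quillen equivalence.

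The one point that needs a little care, and which I expect to be the main obstacle, is the interplay between the positive \emph{stable} structure and the levelwise description of the unit map: one must be sure that ``positive level equivalence implies positive stable equivalence'' genuinely holds in this setting (it does, since the stable equivalences are defined by localizing the positive projective structure, so positive level equivalences are in particular positive stable equivalences), and that the localizing maps $\zeta_n^{QC}$ for $\spec(\ch)$ are carried by $i$, up to the cofibrant-replacement subtleties, into the localizing maps for $\spec(\CH)$ so that the localized adjunction is still Quillen. Since $i$ is strong symmetric monoidal and commutes with the functors $F_n$ and with $-\otimes\Z[1]$, and since a cofibrant replacement of a (co)domain of a generating cofibration in $\ch$ maps to a cofibrant replacement of its image in $\CH$, this compatibility is routine. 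Assembling these observations yields the proposition; the argument is essentially a positive-model-structure refinement of the proof of~\cite[Proposition 4.9]{s-alg}.
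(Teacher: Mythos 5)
Your outline of the Quillen adjunction and the reduction to the known stable equivalence of \cite[4.9]{s-alg} is reasonable, but the equivalence step has a genuine gap: you verify only the point-set unit $X \ra C_0 i X$, whereas the relevant criterion concerns the \emph{derived} unit $X \ra C_0(f\, i X)$, where $f$ is a positive stably fibrant replacement in $\spec(\CH,\Z[1])$. To pass from one to the other you assert that $C_0$ preserves and detects \emph{all} positive stable equivalences, justified by ``stable equivalences in $\spec(\CH)$ are detected on homotopy groups.'' That justification does not hold for symmetric spectra in general: stable equivalences and $\pi_*$-isomorphisms do not coincide (the localizing maps of the Bousfield localization are stable equivalences without being level or naive $\pi_*$-isomorphisms), and the fact that the \emph{derived} functors of $i$ and $C_0$ are inverse equivalences of homotopy categories says nothing about the point-set functor $C_0$ sending arbitrary stable equivalences to stable equivalences. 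What one actually knows --- and what the paper is careful to use --- is only that $C_0$ preserves and detects positive stable equivalences \emph{between positive stably fibrant objects}, because the negative homology of a positive $\Omega$-spectrum is determined by the non-negative homology of its positive levels. With only that, checking $X \ra C_0 iX$ on cofibrant $X$ is not enough: $iX$ is in general not stably fibrant, and applying $C_0$ to the stable equivalence $iX \ra f\,iX$ is precisely the unjustified step. The paper's Corollary \ref{cor.adjunction} is formulated about the composite $X \ra C_0 iX \ra C_0 f i X$ for exactly this reason, and its proof has to argue separately that $C_0 f i$ preserves positive level equivalences.

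The paper's route around this is different from yours at the key point: after establishing the Quillen adjunction via \cite[A.2]{dugger.simp} and noting that $C_0$ detects weak equivalences between positive stably fibrant objects, it invokes \cite[4.1.7]{HSS}, so that it suffices to show the derived composite $C_0 i$ is an isomorphism on cofibrant objects, and then uses \cite[2.2.1]{ss2} to reduce this to the single generator $\mathrm{Sym}(\Z[1])$, which is cofibrant, positive stably fibrant, and concentrated in non-negative degrees --- so no fibrant replacement intervenes and the derived unit is an isomorphism there. Your argument can be repaired either along these lines or by proving directly (as in Corollary \ref{cor.adjunction}) that $C_0 f i$ preserves positive level equivalences; but as written, the passage from the underived unit to the derived unit is missing, and the blanket claim that $C_0$ preserves all positive stable equivalences is unsupported.
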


\begin{cor}\label{cor.adjunction}
Let $f$ be a \bltext{postive stably} fibrant replacement functor in \bltext{$\spec(\CH)$} 
and let $\eta\colon
X \ra C_0iX$ be the unit of the adjunction. The composite $X \to C_0iX
\to C_0 f i X$, is a stable equivalence for all objects $X$ in 
$\spec(\ch, \Z[1])$.
\end{cor}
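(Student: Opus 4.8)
The plan is to leverage Proposition~\ref{prop.chain.adjoint}, which tells us that $(i, C_0)$ is a Quillen equivalence between the positive stable model structures. Since $i$ is a left Quillen functor and $C_0$ is a right Quillen functor, the derived unit $X \to C_0 f i X$ (where $f$ is a fibrant replacement in $\spec(\CH,\Z[1])$ applied levelwise, i.e.\ the fibrant replacement coming from $\CH$) is a weak equivalence precisely when $X$ is cofibrant. So the first step is to reduce the statement for arbitrary $X$ to the case of cofibrant $X$. For this I would take a cofibrant replacement $q\colon cX \to X$ in $\spec(\ch,\Z[1])$ (in whichever of the two model structures is under consideration) and contemplate the naturality square relating $cX \to C_0 f i (cX)$ to $X \to C_0 f i X$. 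The left vertical map is a stable equivalence by hypothesis on $q$; the bottom horizontal map is a stable equivalence because the composite $i$ followed by fibrant replacement followed by $C_0$ takes stable equivalences between cofibrant objects to stable equivalences (as $i$ is left Quillen and $C_0 f$ sends all weak equivalences, in particular those between fibrant objects, to weak equivalences); and the top horizontal map is a stable equivalence by the cofibrant case. Two-out-of-three then forces the right vertical map $X \to C_0 f i X$ to be a stable equivalence.

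It remains to treat the cofibrant case, and here I would argue exactly as in the proof of Proposition~\ref{prop.chain.adjoint}: by~\cite[2.2.1]{ss2} (or~\cite[4.1.7]{HSS}) it suffices to check the derived unit on the generator $\mathrm{Sym}(\Z[1])$, since the derived unit is a natural transformation of functors commuting with the relevant colimits and both sides preserve the cellular structure built from this generator. But $\mathrm{Sym}(\Z[1])$ is concentrated in non-negative chain degrees in every spectrum level, it is cofibrant, and it is already stably fibrant in $\spec(\ch,\Z[1])$; applying $i$ lands in $\spec(\CH,\Z[1])$ with an object that is still concentrated in non-negative degrees, so its fibrant replacement does not introduce any genuinely new negative-degree homology that $C_0$ would fail to see, and $C_0 i \,\mathrm{Sym}(\Z[1]) \cong \mathrm{Sym}(\Z[1])$ with the unit an isomorphism on homology. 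This is the content that was already established inside Proposition~\ref{prop.chain.adjoint}.

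The one point requiring a little care is the distinction between the two model structures named in the statement (the positive stable and the ordinary stable structure on $\spec(\ch,\Z[1])$): the reduction to cofibrant objects and the check on $\mathrm{Sym}(\Z[1])$ must be carried out consistently within whichever structure one fixes, and one uses that $\mathrm{Sym}(\Z[1])$ is cofibrant and stably fibrant in \emph{both} (for the positive structure this needs the positive analogue of the cofibrancy statement, which holds because $\Z[1]$ sits in positive level and positive chain degree). I expect the main obstacle to be organizing the naturality-square argument so that the fibrant replacement $f$ is inserted correctly on both rows — one wants $f$ applied to $i(cX)$ and to $i(X)$ compatibly, which is automatic once $f$ is chosen functorially — and verifying that $C_0 f i$ genuinely preserves stable equivalences between cofibrant objects, not merely weak equivalences between fibrant ones; this follows by factoring any such equivalence through its fibrant replacements and using the already-noted behavior of $i$ and of $C_0 f$.
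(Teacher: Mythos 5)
Your overall strategy --- reduce to the cofibrant case via a cofibrant replacement $q\colon cX \to X$ and a naturality square, then get the cofibrant case from Proposition~\ref{prop.chain.adjoint} --- is the same as the paper's. (The labels in your square are scrambled: the map to be proven an equivalence, $X \to C_0 f i X$, is what you call the ``bottom horizontal'' in one sentence and the ``right vertical'' in another, while the map needing a separate argument is $C_0 f i(q)$; that is cosmetic.) The genuine gap is in that separate argument: you justify it by saying that $C_0 f i$ preserves stable equivalences \emph{between cofibrant objects}, using that $i$ is left Quillen (Ken Brown), and then you apply this to $q\colon cX \to X$, whose target $X$ is an arbitrary object and in general not cofibrant. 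A left Quillen functor need not preserve weak equivalences with non-cofibrant target, and nothing in your write-up (or in the paper) shows that $i$ preserves all stable equivalences; so as stated the step that makes $C_0 f i(q)$ a stable equivalence is unsupported, and your later remark about ``factoring through fibrant replacements'' still only covers maps between cofibrant objects.

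The repair is exactly the paper's observation: choose $q$ to be a trivial fibration (as the factorization axioms allow); trivial fibrations are (positive) levelwise equivalences, and $C_0 f i$ preserves (positive) levelwise equivalences --- $i$ preserves all of them with no cofibrancy hypothesis, $f$ sends the resulting stable equivalence to one between (positively) stably fibrant objects, which is again a (positive) levelwise equivalence, and $C_0$ preserves these between fibrant objects. With that substitution your square argument closes, and your two-model-structure bookkeeping is fine. Two smaller points: the functor $f$ is a stably fibrant replacement in $\spec(\CH,\Z[1])$ (positive or ordinary stable structure), not a levelwise fibrant replacement ``coming from $\CH$'' as your parenthesis suggests --- in the projective structure on $\CH$ every complex is fibrant, so that reading would make the statement vacuous; and for the cofibrant case you need not redo the generator computation from Proposition~\ref{prop.chain.adjoint}: since $(i,C_0)$ is a Quillen equivalence and $C_0$ detects weak equivalences between fibrant objects, the derived unit is automatically a stable equivalence on all cofibrant objects, which is how the paper quotes it.
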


\begin{proof}
It follows from the proof
of Proposition~\ref{prop.chain.adjoint} that the derived unit of the
adjunction is a weak
equivalence whenever $X$ is \bltext{positive} cofibrant.  Since \bltext{positive} trivial fibrations are
positive levelwise weak equivalences and a positive cofibrant replacement $cX
\to X$ is a \bltext{positive} trivial fibration, we only need to show that $C_0 f i$
preserves positive levelwise equivalences.  The inclusion $i$
preserves positive levelwise equivalences and $f$ preserves stable
equivalences. Any stable equivalence between positive stably fibrant
objects is a positive levelwise equivalence, so $f i$ preserves
positive levelwise equivalences.  Since $C_0$ preserves positive
levelwise equivalences between positive stably fibrant objects, the
corollary follows.
\end{proof}

\begin{cor} \label{cor:chCH}
The adjoint functors $i$  and $C_0$ induce a Quillen equivalence
between the commutative monoids in $\spec(\ch, \Z[1])$ and $\spec(\CH, \Z[1])$.
\end{cor}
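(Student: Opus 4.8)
The plan is to use the general machinery for lifting a Quillen equivalence to commutative monoids, exactly as in the proof of Theorem~\ref{thm:hzvssab} and Theorem~\ref{thm:qecommspectrasabchab}. Since $i\colon \ch \ra \CH$ is strong symmetric monoidal and hence so is the induced functor on symmetric spectra, the adjunction $(i,C_0)$ passes to an adjunction between the categories of commutative monoids. The right-induced model structures on both sides exist by the results of Pavlov--Scholbach quoted in Section~\ref{sec:cm} (applied to $\ch$ and $\CH$, both of which satisfy their hypotheses). First I would observe that $C_0$ preserves and detects positive stable fibrations and detects weak equivalences between positive stably fibrant objects: on commutative monoids these are created in the underlying categories of symmetric spectra, so this is immediate from the proof of Proposition~\ref{prop.chain.adjoint}. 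By the usual criterion (e.g.~\cite[A.2(iii)]{mmss} or~\cite[1.3.16]{HSS}) it then suffices to show that for every cofibrant commutative monoid $A$ in $C(\spec(\ch,\Z[1]))$, the composite $A \to C_0 i A \to C_0 f i A$ is a stable equivalence, where $f$ is a fibrant replacement functor in $\spec(\CH,\Z[1])$.

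The key step is therefore to upgrade Corollary~\ref{cor.adjunction} from underlying symmetric spectra to commutative monoids. The point, as in the proof of Theorem~\ref{thm:hzvssab}, is that a cofibrant commutative monoid need not be cofibrant as an underlying symmetric spectrum, so one cannot directly invoke Proposition~\ref{prop.chain.adjoint}. Instead I would argue that Corollary~\ref{cor.adjunction} already gives what we need: it states that $X \to C_0 f i X$ is a stable equivalence for \emph{all} objects $X$ in $\spec(\ch,\Z[1])$, with no cofibrancy hypothesis, because $C_0 f i$ preserves positive levelwise equivalences between suitable objects and trivial fibrations (in particular cofibrant replacements) are positive levelwise equivalences. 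Applying this to $X$ the underlying symmetric spectrum of the cofibrant commutative monoid $A$ shows that $A \to C_0 f i A$ is a stable equivalence of underlying symmetric spectra, which is precisely a stable equivalence of commutative monoids since weak equivalences are created underneath. Hence $(i,C_0)$ is a Quillen equivalence on commutative monoids.

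The main obstacle I anticipate is purely bookkeeping: one must be careful that the derived unit one needs to check is really the zig-zag $A \to C_0 i A \to C_0 f i A$ through a fibrant replacement in the \emph{commutative monoid} category rather than in the underlying category, and that a fibrant replacement of $iA$ as a commutative monoid has underlying symmetric spectrum that is positive stably fibrant (this holds because fibrations of commutative monoids are created underneath). Once that compatibility is in place, the statement is a formal consequence of Corollary~\ref{cor.adjunction} together with the fact that both weak equivalences and fibrations on commutative monoids are detected on underlying symmetric spectra, so no new homotopical input beyond Section~\ref{sec:chCH} is required.
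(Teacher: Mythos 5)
Your proposal is correct and follows essentially the same route as the paper: reduce to the derived unit via creation of fibrations and weak equivalences on underlying spectra, then invoke Corollary~\ref{cor.adjunction}, whose statement for \emph{all} objects (no cofibrancy hypothesis) is exactly what makes the lift to commutative monoids work, with the same observation that the commutative-monoid fibrant replacement is still underlying positively stably fibrant so the corollary's argument applies verbatim.
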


\begin{proof}
Since the weak equivalences and fibrations are determined on the
underlying positive stable model structures, $C_0$ still preserves
fibrations and weak equivalences between positive stably fibrant
objects.  By~\cite[4.1.7]{HSS} it is then enough to check the derived
composite $C_0 i$ is a
stable equivalence for all cofibrant commutative monoids. This is
shown for all objects in Corollary~\ref{cor.adjunction}. The fibrant
replacement functor for commutative monoids will be different, but
the properties used in the proof of that corollary \bltext{still hold}, so we
conclude.
\end{proof}

\section{Quillen equivalence between $E_\infty$-monoids
in $\Chab$  and $\Sp^\Sigma(\Chab)$}

\label{sec:einfty}

We fix a cofibrant $E_\infty$-operad $\mathcal{O}$ in $\Chab$ {(in the
model structure on operads as in \cite[\S 2, Remark 2]{sp})} and we
consider the operad $F_0\mathcal{O}$ in symmetric spectra in chain
complexes. 

Let $\Chab$ carry the projective model structure and let
$E_\infty\Chab$ {denote the category of $\mathcal{O}$-algebras
  in $\Chab$ with its} right-induced model structure \cite[\S4, Theorem
4]{sp}. This model 
structure exists because $\Chab$ is a cofibrantly generated
monoidal model category, it satisfies the monoid axiom
\cite[3.4]{s-alg} {and $\cO$ is cofibrant.}  Alternatively, we
could work with Mandell's  
model structure on $E_\infty$-monoids in $\CH$ using the operad of the chains 
on the linear isometries operad \cite{mm-flat}. {See also
  \cite{bm1} for general existence results of model structures for
  categories of algebras over operads.}

Similarly, $\Sp^\Sigma(\Chab,\Z[1])$ with the stable model structure
is a cofibrantly generated monoidal model category satisfying the
monoid axiom \cite[3.4]{s-alg}, and as the set of generating acyclic
cofibrations
for the {positive stable} model structure on $\Sp^\Sigma(\Chab,\Z[1])$
is a subset of the ones for the stable structure, the positive stable
model category also satisfies the monoid axiom. We consider two model
structures for $E_\infty$-monoids in $\Sp^\Sigma(\Chab,\Z[1])$,
$E_\infty\Sp^\Sigma(\Chab,\Z[1])$:

\begin{itemize}
\item
We denote by $E_\infty\Sp^\Sigma(\Chab,\Z[1])^{s,+}$ the
model structure in which the  forgetful functor to the positive stable model
category structure on $\Sp^\Sigma(\Chab,\Z[1])$ determines the
fibrations and weak equivalences.
\item
Let
$E_\infty\Sp^\Sigma(\Chab,\Z[1])^{s}$ denote the model category
whose fibrations and weak equivalences are determined by the forgetful
functor to the stable model structure on
$\Sp^\Sigma(\Chab,\Z[1])$.
\end{itemize}

\begin{prop} \label{prop:positivevsordinary}
The model structure on $E_\infty\Sp^\Sigma(\Chab,\Z[1])^{s,+}$ is Quillen
equivalent to the model structure $E_\infty\Sp^\Sigma(\Chab,\Z[1])^s$.
\end{prop}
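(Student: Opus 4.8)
The plan is to exhibit the identity-functor adjunction between the two model structures as a Quillen equivalence. Both model categories have the same underlying category $E_\infty\Sp^\Sigma(\Chab,\Z[1])$ and the same cofibrations are not claimed; rather, since the stable positive model structure on $\Sp^\Sigma(\Chab,\Z[1])$ is a Bousfield localization–free comparison with the stable model structure in the sense that they have the same weak equivalences (the stable equivalences) but the positive one has fewer cofibrations, the identity functor $\id\colon \Sp^\Sigma(\Chab,\Z[1])^{s,+}\to \Sp^\Sigma(\Chab,\Z[1])^{s}$ is a left Quillen equivalence. First I would recall this underlying fact: the positive and ordinary stable model structures on $\Sp^\Sigma(\Chab,\Z[1])$ have the same class of weak equivalences (the $\pi_*$-isomorphisms after stabilization), so the identity adjunction between them is a Quillen equivalence (this is the chain-complex analogue of \cite[\S14]{mmss}).

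Next I would transfer this along the forgetful/free adjunction to $E_\infty$-monoids. Both model structures on $E_\infty\Sp^\Sigma(\Chab,\Z[1])$ are right-induced from the respective model structures on $\Sp^\Sigma(\Chab,\Z[1])$ via the same forgetful functor $U$ to symmetric spectra (existence of these right-induced structures is Theorem~\ref{thm.op.models} together with the monoid-axiom discussion preceding this proposition). Consequently a map of $E_\infty$-monoids is a weak equivalence (resp.\ fibration) in $E_\infty\Sp^\Sigma(\Chab,\Z[1])^{s,+}$ if and only if it is one in $E_\infty\Sp^\Sigma(\Chab,\Z[1])^{s}$, because $U$ detects these classes and the underlying weak equivalences and underlying fibrations agree or refine in the expected direction: the two underlying model structures have identical weak equivalences, and every positive stable fibration is a stable fibration after forgetting, hence likewise on $E_\infty$-monoids. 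Therefore the identity functor
$$
\xymatrix@1{
E_\infty\Sp^\Sigma(\Chab,\Z[1])^{s,+} \ar@<0.5ex>[r]^{\id} &
E_\infty\Sp^\Sigma(\Chab,\Z[1])^{s} \ar@<0.5ex>[l]^{\id}
}
$$
is a Quillen adjunction: the left adjoint $\id$ preserves cofibrations (the positive model structure has fewer generating cofibrations, whose images are cofibrations in the stable one) and preserves acyclic cofibrations (same weak equivalences).

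Finally, to see it is a Quillen equivalence it suffices to check that for a cofibrant object $A$ in $E_\infty\Sp^\Sigma(\Chab,\Z[1])^{s,+}$ and a fibrant replacement $\id(A)=A\to RA$ in $E_\infty\Sp^\Sigma(\Chab,\Z[1])^{s}$, the composite $A\to RA$ is a weak equivalence in the source model structure. But $A\to RA$ is a stable equivalence of underlying symmetric spectra by construction, and stable equivalences are exactly the weak equivalences in the positive stable model structure as well; so $A\to RA$ is a weak equivalence in $E_\infty\Sp^\Sigma(\Chab,\Z[1])^{s,+}$. Since also every fibrant object of the source maps by the identity to a weak equivalence into its fibrant replacement in the target, the derived unit and counit are weak equivalences, and the adjunction is a Quillen equivalence. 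The only genuinely technical point, and the one I would spell out carefully, is the underlying statement that the positive and ordinary stable model structures on $\Sp^\Sigma(\Chab,\Z[1])$ share the same weak equivalences; everything else is formal once the right-induced model structures exist. This is exactly the analogue of the comparison in \cite[\S14]{mmss} and also underlies Proposition~\ref{prop.chain.adjoint}, so I would invoke it rather than reprove it.
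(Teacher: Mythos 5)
Your argument is essentially the paper's: both run the identity adjunction with the left Quillen functor going from the positive stable to the ordinary stable structure, use that the two structures share the same (right-induced) weak equivalences, and conclude formally that the adjunction is a Quillen equivalence. One inclusion is misstated in passing --- it is stable fibrations that are positive stable fibrations (the positive structure has fewer cofibrations, hence more fibrations), not the other way around --- but this slip is harmless since your actual verification of the Quillen adjunction proceeds via cofibrations and acyclic cofibrations, which is correct and is the mirror image of the paper's check that the identity is right Quillen.
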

\begin{proof}
We consider the adjunction
$$ \xymatrix@1{{(E_\infty\Sp^\Sigma(\Chab,\Z[1])^s)}
\ar@<-0.5ex>[r]_{R} &   \ar@<-0.5ex>[l]_{L}
{(E_\infty\Sp^\Sigma(\Chab,\Z[1])^{s,+})} } $$
where $R$ and $L$ are both the identity functor. If $p$ is a fibration in the
stable model structure on $E_\infty\Sp^\Sigma(\Chab,\Z[1])$ then it is also
a positive stable fibration in $E_\infty\Sp^\Sigma(\Chab,\Z[1])$. Therefore $R$
preserves fibrations. As the weak equivalences in both model
structures agree,  $R$ is a right Quillen functor and it preserves and
reflects weak equivalences.  Hence the unit of the adjunction is a
weak equivalence.
\end{proof}

In the following we use
Hovey's comparison result \cite[9.1]{H}: Tensoring with $\Z[1]$
induces a Quillen autoequivalence on the category of unbounded chain
complexes, so we get that the pair $(F_0,\ev0)$ induces a Quillen
equivalence
$$ \xymatrix@1{ {\Chab} \ar@<0.5ex>[r]^(.3){F_0} & \Sp^\Sigma(\Chab,\Z[1])^s.
  \ar@<0.5ex>[l]^(.7){\ev0}
} $$
We can then transfer this Quillen equivalence to the corresponding
categories of $E_\infty$-monoids:
Both $F_0$ and $\ev0$ are strong symmetric monoidal functors. Fix a
cofibrant $E_\infty$-operad $\mathcal{O}$ in $\CH$ {as
  above}. As $\ev0 \circ 
F_0$ is the identity, $\ev0$ maps $F_0\mathcal{O}$-algebras in
$E_\infty\Sp^\Sigma(\Chab,\Z[1])$  to $\mathcal{O}$-algebras in
unbounded chain complexes.

\begin{thm} \label{thm:Qequeinftychains}
The functors $(F_0,\ev0)$ induce a  Quillen equivalence
$$\xymatrix{F_0\colon E_\infty\CH \ar@<0.5ex>[rr] & &
  \ar@<0.5ex>[ll] E_\infty\Sp^\Sigma(\Chab,\Z[1])^s :\!\ev0.}$$
\end{thm}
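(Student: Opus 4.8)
The plan is to transfer the Quillen equivalence $(F_0, \ev0) \colon \CH \rightleftarrows \Sp^\Sigma(\Chab, \Z[1])^s$ to the categories of $E_\infty$-monoids, using the fact that both functors are strong symmetric monoidal. First I would observe that because $F_0$ is strong symmetric monoidal, it carries the $E_\infty$-operad $\mathcal{O}$ in $\CH$ to the operad $F_0\mathcal{O}$ in symmetric spectra, and $F_0$ extends to a functor on algebra categories $E_\infty\CH = \mathcal{O}(\CH) \to F_0\mathcal{O}(\Sp^\Sigma(\Chab,\Z[1]))$; I should reconcile this with the target $E_\infty\Sp^\Sigma(\Chab,\Z[1])^s$, which is $\mathcal{O}$-algebras in symmetric spectra. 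Since $\mathcal{O}$ in $\CH$ is cofibrant, $F_0\mathcal{O}$ is a cofibrant $E_\infty$-operad in $\Sp^\Sigma(\Chab,\Z[1])$, and $\mathcal{O}$ (pushed forward along the unit $\mathbf{1} \to F_0 \mathbf{1}$, or equivalently regarded in $\Sp^\Sigma$) is likewise an $E_\infty$-operad; by Theorem~\ref{thm.op.compare} applied to the map of operads $F_0\mathcal{O} \to \mathcal{O}$ (a levelwise non-equivariant weak equivalence, since both are $E_\infty$) the two algebra categories are Quillen equivalent. So it suffices to produce a Quillen equivalence between $E_\infty\CH$ and $F_0\mathcal{O}$-algebras in $\Sp^\Sigma(\Chab,\Z[1])^s$.

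Next I would verify the adjunction $(F_0, \ev0)$ on algebra categories is Quillen. The right-induced model structures exist: on $E_\infty\CH$ by \cite[theorem 4.3]{sp} as recalled in the excerpt, and on $F_0\mathcal{O}$-algebras in $\Sp^\Sigma(\Chab,\Z[1])^s$ by the monoid-axiom argument recalled just before Proposition~\ref{prop:positivevsordinary} (or via Theorem~\ref{thm.op.models} together with Proposition~\ref{prop:positivevsordinary} to pass from positive to ordinary stable). Since $\ev0$ on the underlying categories preserves fibrations and weak equivalences, and fibrations/weak equivalences in both algebra categories are detected by the respective forgetful functors, the induced $\ev0$ on $E_\infty$-monoids also preserves fibrations and acyclic fibrations; hence $(F_0, \ev0)$ is a Quillen adjunction.

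To show it is a Quillen equivalence, I would use that $\ev0$ detects weak equivalences (it does on underlying objects, as it reads off level zero and all homology of a positive $\Omega$-spectrum is determined there, exactly as in Proposition~\ref{prop.chain.adjoint}), so by \cite[4.1.7]{HSS} it remains to check the derived unit $A \to \ev0 \circ R(F_0 A)$ is a weak equivalence for cofibrant $E_\infty$-monoids $A$, where $R$ is a fibrant replacement. The underlying-spectrum Quillen equivalence from Hovey's \cite[9.1]{H} tells us $A \to \ev0 R_{\mathrm{sp}} F_0 A$ is a stable equivalence when $A$ is cofibrant in $\CH$; the issue is that a cofibrant $E_\infty$-monoid need not be cofibrant as an underlying chain complex, and the fibrant replacement in $E_\infty$-monoids differs from that in spectra. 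I would handle this the same way as Corollary~\ref{cor.adjunction} and Corollary~\ref{cor:chCH}: reduce to showing $\ev0 \circ R \circ F_0$ sends weak equivalences of chain complexes to weak equivalences (using that $F_0$ preserves stable equivalences, $R$ preserves stable equivalences and lands in positive $\Omega$-spectra, and $\ev0$ preserves weak equivalences between such), and invoke that $\ev0 F_0 = \id$ so the unit is an isomorphism before fibrant replacement; combining these gives the derived unit is a weak equivalence on all cofibrant $A$.

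The main obstacle is the one just flagged: cofibrant $E_\infty$-monoids are not underlying-cofibrant, so the unit-map argument cannot be imported verbatim from the underlying Quillen equivalence. The resolution is the ``propagate through fibrant replacement'' trick of Corollaries~\ref{cor.adjunction} and~\ref{cor:chCH}, which only uses that the relevant functors preserve the appropriate classes of equivalences; the fact that $\ev0$ and $F_0$ are strictly inverse on the nose ($\ev0 F_0 = \id$) makes this clean. A secondary bookkeeping point is keeping straight whether one compares against $F_0\mathcal{O}$-algebras or $\mathcal{O}$-algebras in spectra; Theorem~\ref{thm.op.compare} bridges this, so no real content is lost.
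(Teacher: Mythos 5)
Your strategy is sound and can be completed, but it is a genuinely different and more laborious route than the paper's. The paper's proof turns on one observation you missed: on unbounded complexes $-\otimes\Z[1]$ is an equivalence of categories with inverse $\Hom(\Z[1],-)$, so $(F_0X)_n = X\otimes\Z[n]\cong \Hom(\Z[1],X\otimes\Z[n+1])$ and every object of $\CH$ is fibrant; hence $F_0X$ is \emph{already stably fibrant for every} $X$. Consequently the derived unit on an $E_\infty$-monoid $A$ is $\ev0$ applied to a weak equivalence between fibrant objects, precomposed with the identity $A=\ev0 F_0A$, so it is a weak equivalence with no cofibrancy hypothesis and no propagation argument; together with the (shared) observation that $\ev0$ reflects weak equivalences between stably fibrant objects, this finishes the proof. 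Your route instead takes Hovey's spectrum-level equivalence \cite[9.1]{H} as a base case for underlying-cofibrant complexes and then propagates through fibrant replacements as in Corollaries~\ref{cor.adjunction} and~\ref{cor:chCH}; this does work (since $F_0$ sends quasi-isomorphisms to level equivalences, fibrant replacement preserves stable equivalences, stable equivalences between stably fibrant objects are level equivalences, and $\ev0$ preserves level equivalences), but to make it airtight you must still compare the spectrum-level fibrant replacement used in the base case with the $E_\infty$-level one, e.g.\ by lifting to get a level equivalence between the two fibrant models under $F_0UA$; your closing phrase ``combining these gives the derived unit is a weak equivalence'' elides exactly this step, which the fibrancy of $F_0A$ renders unnecessary in the paper. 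Finally, the detour through Theorem~\ref{thm.op.compare} is not needed: the paper defines $E_\infty\Sp^\Sigma(\Chab,\Z[1])$ outright as the category of $F_0\mathcal{O}$-algebras, and the proposed ``map of operads $F_0\mathcal{O}\to\mathcal{O}$'' does not parse, since $\mathcal{O}$ is an operad in $\CH$, not in symmetric spectra.
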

\begin{proof}
The proof follows Hovey's proof of \cite[5.1]{H}. It
is easy to see that $\ev0$ reflects weak equivalences between
stably fibrant objects: If $f\colon X \ra Y$ is such a map and $f(0)$ is a
weak equivalence, then $f(\ell)$ is a weak equivalence for all $\ell \geq 0$,
because $X$ and $Y$ are fibrant and $(-)\otimes \Z[1]$ is a
Quillen equivalence.

In our case $(-)\otimes \Z[1]$ is an equivalence of categories with inverse
the functor $\mathit{Hom}(\Z[1],-)$, where $\mathit{Hom}(-,-)$ is the internal
homomorphism bifunctor.

Therefore, for any $X$ in ${E_\infty}\Chab$, $F_0X$ is stably
fibrant because 
$$ (F_0X)_n = X\otimes \Z[n] \cong  \mathit{Hom}(\Z[1],X\otimes \Z[n+1])$$
and as every object in $\Chab$ is fibrant, $F_0X$ is always fibrant in
the projective model structure on $E_\infty\Sp^\Sigma(\Chab,\Z[1])$.

Let $A$ be a cofibrant object in  $E_\infty\Chab$. We have to show that
$$ \eta\colon A \ra \ev0 W (F_0A)$$
is a weak equivalence, for $W(-)$ the fibrant replacement in
$E_\infty\Sp^\Sigma(\Chab,\Z[1])$. But we saw that $F_0A$ is fibrant and
$A \ra \ev0 F_0A =A$ is the identity map, thus $\eta$ is a weak equivalence.
See also 
{\cite[8.10]{ps1}} for an alternative approach to this theorem.
\end{proof}

Observe that all of the Quillen equivalences that we have established so far
did not use any particular properties of $\Z$. We can therefore
generalize our results as follows.
\begin{cor} \label{cor:sum}
Let $R$ be a commutative ring with unit. There is a chain of
Quillen equivalences between the model category of commutative
$\HR$-algebra spectra and $E_\infty$-monoids in the category of
unbounded $R$-chain complexes.
\end{cor}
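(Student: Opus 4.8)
The plan is to assemble the result from the chain of Quillen equivalences established throughout the paper, checking at each stage that the arguments never relied on any special property of the ground ring $\Z$. First I would observe that each functor appearing in the diagram from the introduction makes sense with $\Z$ replaced by an arbitrary commutative unital ring $R$: the functors $Z$ and $U$ between $C(HR\text{-mod})$ and $C(\Sp^\Sigma(\sab))$ only use the free $R$-module monad $\widetilde{R}$ and the isomorphism $\widetilde{R}(\bS) \cong HR$; the Dold-Kan functors $N$ and $L_N$ work over any ring since normalization is an equivalence of categories of (symmetric sequences of) $R$-modules; the inclusion $i\colon \ch \to \CH$ and truncation $C_0$ are defined levelwise and are insensitive to the ground ring; and the operadic functors $F_0$, $\ev0$ together with the rigidification comparison $\varepsilon_*, \varepsilon^*$ between $E_\infty$-monoids and commutative monoids are all formal. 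So the chain of Quillen equivalences
$$ C(HR\text{-mod}) \simeq C(\Sp^\Sigma(\sab)) \simeq C(\Sp^\Sigma(\ch)) \simeq C(\Sp^\Sigma(\CH)) \simeq E_\infty\Sp^\Sigma(\CH) \simeq E_\infty\CH $$
exists verbatim, where the first equivalence is the $R$-analogue of Theorem~\ref{thm:hzvssab}, the second of Theorem~\ref{thm:qecommspectrasabchab}, the third of Corollary~\ref{cor:chCH}, the fourth is the Pavlov-Scholbach rigidification result of section~\ref{sec:cm}, and the last is the $R$-analogue of Theorem~\ref{thm:Qequeinftychains} (together with Proposition~\ref{prop:positivevsordinary} to pass between the positive and ordinary stable structures on $E_\infty\Sp^\Sigma(\CH)$).

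Second, I would go through the inputs feeding into those equivalences and confirm their $R$-versions. The flat (or $R$-model) model structures of Shipley's \emph{convenient} paper, on which Lemma~\ref{lem.flat.cofibrant} and Lemma~\ref{lem.flat.level} rest, are set up for symmetric ring spectra over an arbitrary commutative ground ring, so the reduction argument in the proof of Theorem~\ref{thm:hzvssab} carries over. The results of section~\ref{sec:cm} (Theorems~\ref{thm.op.models} and~\ref{thm.op.compare}, Proposition~\ref{prop.gen.15.5}, Proposition~\ref{prop.h.list}) are stated for $\Sp^\Sigma(\CH)$ of abelian groups, but the only facts used — existence of the h-cofibration model structure on unbounded chain complexes, freeness of the $\Sigma_i$-action on $\Sigma_q$, and the cellular filtration of cofibrant objects — hold with $R$-modules in place of abelian groups; likewise the Pavlov-Scholbach hypotheses (tractable, pretty small, left proper, h-monoidal, flat) are satisfied by $R$-chain complexes and by simplicial $R$-modules. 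The model structure on $E_\infty R$-chain complexes exists because $\CHR$ is a cofibrantly generated monoidal model category satisfying the monoid axiom, just as in section~\ref{sec:einfty}.

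Third, I would note that the proof is essentially a bookkeeping argument and that the only genuinely load-bearing observation — the sentence immediately preceding the corollary in the excerpt — has already been made: none of the intermediate Quillen equivalences used a particular property of $\Z$. So the proof of Corollary~\ref{cor:sum} can simply be: \emph{Replace $\Z$ by $R$ throughout sections~\ref{sec:cm}--\ref{sec:einfty}; each Quillen equivalence in the chain above goes through with the identical argument, and composing them yields the claim.} The one point I would be slightly careful about is the rigidification step: one must cite the Pavlov-Scholbach result \cite[3.4.4]{ps2} in the form that applies to $E_\infty$-monoids versus commutative monoids in $\Sp^\Sigma(\CHR)^{s,+}$, and then invoke Proposition~\ref{prop:positivevsordinary} (in its $R$-version) to match up with the ordinary stable structure used in Theorem~\ref{thm:Qequeinftychains}; this is routine but is the place where the most care is needed to keep the positive and non-positive variants straight. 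I do not expect any serious obstacle — the main "difficulty" is merely verifying that the cited external results are stated with enough generality, which they are.
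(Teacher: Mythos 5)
Your proposal is correct and matches the paper's own argument: the paper proves Corollary \ref{cor:sum} simply by observing that none of the Quillen equivalences in the chain (Theorems \ref{thm:hzvssab}, \ref{thm:qecommspectrasabchab}, Corollary \ref{cor:chCH}, the rigidification of section \ref{sec:cm}, and Theorem \ref{thm:Qequeinftychains}) used any special property of $\Z$, so everything goes through verbatim over an arbitrary commutative ring $R$. Your additional bookkeeping — checking that the cited external inputs (flat model structures, Pavlov--Scholbach hypotheses, the positive-versus-ordinary stable comparison of Proposition \ref{prop:positivevsordinary}) are stated in sufficient generality — is exactly the implicit content of that observation.
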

For $R=\Q$ we can strengthen the result:
\begin{cor} \label{cor:rat}
There is a chain of Quillen equivalences between the model category of
commutative $H\Q$-algebra spectra and differential graded commutative
$\Q$-algebras.
\end{cor}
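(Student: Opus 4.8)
The plan is to combine Corollary~\ref{cor:sum} with a strictification result that is available over a field of characteristic zero. By Corollary~\ref{cor:sum} applied to $R=\Q$, commutative $H\Q$-algebra spectra are linked by a chain of Quillen equivalences to $E_\infty\CH_\Q$, the model category of $\mathcal{O}$-algebras for a fixed cofibrant $E_\infty$-operad $\mathcal{O}$ in the category $\CH_\Q$ of unbounded $\Q$-chain complexes, equipped with the right-induced model structure of~\cite[theorem 4.3]{sp}. So it remains to produce a Quillen equivalence between $E_\infty\CH_\Q$ and the category $C(\CH_\Q)$ of commutative monoids in $\CH_\Q$, that is, of differential graded commutative $\Q$-algebras. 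I would equip $C(\CH_\Q)$ with the model structure right-induced from the projective model structure on $\CH_\Q$; this is the usual (Bousfield--Gugenheim type) model structure on commutative differential graded $\Q$-algebras, and it exists because in characteristic zero the commutative operad is itself $\Sigma$-cofibrant, so that \cite[theorem 4.3]{sp} applies, equivalently because $\CH_\Q$ satisfies the commutative monoid axiom (see also~\cite{W}).

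Next I would compare the two operads. Write $\Com$ for the commutative operad in $\CH_\Q$, with $\Com(n)=\Q$ concentrated in degree zero with trivial $\Sigma_n$-action. Since $|\Sigma_n|$ is invertible in $\Q$, the algebra $\Q[\Sigma_n]$ is semisimple, so $\Com(n)$ is cofibrant in the projective model structure on chain complexes of $\Q[\Sigma_n]$-modules and $\Com$ is itself an $E_\infty$-operad. Hence there is a map of operads $\phi\colon\mathcal{O}\to\Com$ which in each arity $n$ is a $\Sigma_n$-equivariant quasi-isomorphism between cofibrant $\Q[\Sigma_n]$-chain complexes (the augmentation $\mathcal{O}(n)\to\Q$). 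Restriction along $\phi$ gives a functor $\phi^*\colon C(\CH_\Q)\to E_\infty\CH_\Q$ with a left adjoint $\phi_!$, and since fibrations and weak equivalences in both categories are created by the forgetful functors to $\CH_\Q$, the functor $\phi^*$ is right Quillen and both preserves and reflects all weak equivalences; thus $(\phi_!,\phi^*)$ is a Quillen adjunction.

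To upgrade this to a Quillen equivalence I would use that $\phi^*$ reflects weak equivalences and that every object of $C(\CH_\Q)$ is fibrant, so by~\cite[4.1.7]{HSS} it suffices to check that the unit $\eta\colon A\to\phi^*\phi_!A$ is a quasi-isomorphism for every cofibrant $\mathcal{O}$-algebra $A$. On a free algebra $\mathcal{O}(X)=\bigoplus_{n\geq0}\mathcal{O}(n)\otimes_{\Sigma_n}X^{\otimes n}$ with $X$ cofibrant in $\CH_\Q$ one has $\phi_!\mathcal{O}(X)=\Sym(X)=\bigoplus_{n\geq0}(X^{\otimes n})_{\Sigma_n}$, and $\eta$ is, in weight $n$, the map $\phi(n)\otimes_{\Sigma_n}X^{\otimes n}\colon\mathcal{O}(n)\otimes_{\Sigma_n}X^{\otimes n}\to(X^{\otimes n})_{\Sigma_n}$; this is a quasi-isomorphism because $\phi(n)$ is a quasi-isomorphism, $\Q$ is a field, and---crucially in characteristic zero---the $\Sigma_n$-coinvariants functor is exact (with $(X^{\otimes n})_{\Sigma_n}\cong(X^{\otimes n})^{\Sigma_n}$). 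The remaining work is to bootstrap from free algebras to an arbitrary cofibrant $A$: such an $A$ is a retract of a cell $\mathcal{O}$-algebra built as a transfinite composition of pushouts of the free maps $\mathcal{O}(f)$ along maps of $\mathcal{O}$-algebras, with $f$ running through the generating cofibrations of $\CH_\Q$; one analyzes the standard operadic filtration of $\phi_!$ applied to each such cell attachment, using again exactness of $\Sigma_n$-coinvariants over $\Q$ and the good behaviour of (h-)cofibrations of $\Q$-chain complexes, to see that each stage of the filtration is sent to a quasi-isomorphism, and then passes to the colimit. I expect this last step---controlling $\phi_!$ on cell attachments and verifying that the comparison map stays a quasi-isomorphism through the transfinite composition---to be the main obstacle; it is essentially the classical strictification argument for $E_\infty$-algebras in characteristic zero, and the only new point is that it goes through verbatim in the unbounded category $\CH_\Q$ (see~\cite{W} and the references therein).
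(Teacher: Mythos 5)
Your proposal is correct and follows essentially the same route as the paper: combine Corollary~\ref{cor:sum} for $R=\Q$ with the rectification of $E_\infty$-monoids in unbounded $\Q$-chain complexes to commutative differential graded $\Q$-algebras, which is available precisely because of characteristic zero. The paper simply quotes this rectification from the literature (\cite[\S 7.1.4]{lurie-higheralg} or \cite[7.4]{ps1}) instead of reproving it, so your operadic comparison sketch, while sound in outline, reproduces a known result rather than taking a different path.
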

\begin{proof}
It is well-known that the category of differential graded commutative
algebras and $E_\infty$-monoids in $\CH(\Q)$ possess a right-induced
model category structure and that there is a Quillen equivalence
between them. For a proof of these facts see {for instance} \cite[\S
7.1.4]{lurie-higheralg}. 
\end{proof}

\begin{rem}
Note that the proof of theorem \ref{thm:Qequeinftychains} applies in
broader generality: If $\cO$ is
an {arbitrary} operad in the category of chain complexes such
that right-induced model structure{s} on
$\cO$-algebras in $\CH$ and {on} $F_0(\cO)$-algebras in $\Sp^\Sigma(\CH,
\Z[1])^s$ exist, then the pair $(F_0,\ev0)$ yields a Quillen
equivalence between the model category of $\cO$-algebras in $\CH$ and
the model category of $F_0(\cO)$-algebras in $\Sp^\Sigma(\CH, \Z[1])^s$.
\end{rem}

\section{Symmetric spectra and $\mathcal{I}$-chain complexes}
\label{sec:ichains}
Let $\mathcal{I}$ denote the skeleton of the category of finite sets
and injective maps with objects the sets $\mathbf{n}=\{1,\ldots,n\}$ for $n
\geq 0$ with the convention that $\mathbf{0} = \varnothing$. The set of
morphisms $\mathcal{I}(\mathbf{p},\mathbf{n})$ consists of all
injective maps from
$\mathbf{p}$ to $\mathbf{n}$. In particular, this set is empty if $n$ is smaller
than $p$. The category $\mathcal{I}$ is a symmetric monoidal category
under disjoint union of sets.

For any category $\mathcal{C}$ we consider the diagram category
$\mathcal{C}^\mathcal{I}$ of functors from $\mathcal{I}$ to
$\mathcal{C}$. If $(\mathcal{C}, \otimes, e)$ is symmetric
monoidal, then
$\mathcal{C}^\mathcal{I}$ inherits a symmetric monoidal structure: For
$A, B \in \mathcal{C}^\mathcal{I}$ we set
$$ (A \boxtimes B)(\mathbf{n}) = \colim_{\mathbf{p}\sqcup \mathbf{q}
  \ra \mathbf{n}} A(\mathbf{p}) \otimes B(\mathbf{q}).$$
For details about $\mathcal{I}$-diagrams see \cite{sasch}. The
following fact is folklore; it was pointed out to the second author by
Jeff Smith in 2006 at the Mittag-Leffler Institute.

\begin{prop} \label{prop:ci}
Let $\mathcal{C}$ be any closed symmetric monoidal category with unit
$e$. Then the category $\Sp^\Sigma(\mathcal{C}, e)$
is {isomorphic} 
to the diagram category $\mathcal{C}^\mathcal{I}$.
\end{prop}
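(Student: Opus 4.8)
The plan is to exhibit a natural equivalence of categories directly, by unwinding both sides. First I would record that when $K = e$ is the unit of $\mathcal{C}$, the symmetric monoidal category $\Sym(e)$ of the definition has $\Sym(e)(n) = e^{\otimes n} \cong e$ with the trivial $\Sigma_n$-action (since permuting factors of $e$ is the identity via the coherence isomorphisms). Thus $\Sym(e)$ is the symmetric sequence that is $e$ concentrated in every level, which, under the symmetric monoidal structure on $\mathcal{C}^\Sigma$, is exactly the monoid whose algebras one should understand combinatorially. The key observation is that a right $\Sym(e)$-module in $\mathcal{C}^\Sigma$ is the same data as a functor out of the category $\mathcal{I}$: a symmetric sequence $X$ records objects $X(n)$ with $\Sigma_n = \mathcal{I}(\mathbf n,\mathbf n)^{\times}$-actions, and the module structure map $X(n)\otimes e \to X(n+1)$ — together with its iterates being $\Sigma_n\times\Sigma_p$-equivariant — supplies exactly the functoriality along the non-invertible injections $\mathbf n \hookrightarrow \mathbf{n+p}$, since every injection in $\mathcal{I}$ factors as a permutation followed by a standard inclusion.

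Concretely, I would construct the functor $\mathcal{C}^\mathcal{I} \to \Sp^\Sigma(\mathcal{C},e)$ by restriction: given $Y\colon \mathcal{I}\to\mathcal{C}$, set $X(n) = Y(\mathbf n)$ with the $\Sigma_n$-action coming from the automorphisms of $\mathbf n$ in $\mathcal{I}$; the structure map $X(n)\otimes e \cong X(n) \to X(n+1)$ is $Y$ applied to the standard inclusion $\mathbf n \hookrightarrow \mathbf{n+1}$. The equivariance conditions in the definition of symmetric spectra are then automatic from functoriality and the relation $\sigma \circ \iota = \iota' \circ \sigma'$ among permutations and standard inclusions in $\mathcal{I}$. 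For the inverse, given a symmetric spectrum $X$, define $Y(\mathbf n) = X(n)$; a general injection $f\colon \mathbf p \hookrightarrow \mathbf n$ is uniquely $\tau \circ \iota_{n-p}$ for the block inclusion $\iota_{n-p}\colon \mathbf p \hookrightarrow \mathbf n$ and a suitable $\tau \in \Sigma_n$, so set $Y(f) = \tau_* \circ (\text{iterated structure map})$. One checks this is well-defined and functorial using precisely the $\Sigma_n\times\Sigma_p$-equivariance of the iterated structure maps that is built into the definition of $\Sp^\Sigma(\mathcal{C},e)$.

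I would then verify that these two assignments are mutually inverse on objects and morphisms; both directions are essentially formal bookkeeping, since a morphism of $\Sym(e)$-modules is a family of $\Sigma_n$-equivariant maps commuting with the structure maps, which is exactly a natural transformation of $\mathcal{I}$-diagrams once the correspondence above is set up. It is worth remarking that this equivalence is even symmetric monoidal: the Day convolution $\boxtimes$ on $\mathcal{C}^\mathcal{I}$ defined via $\colim_{\mathbf p \sqcup \mathbf q \to \mathbf n}$ matches $\odot_{\Sym(e)}$, because $\odot$ is the Day convolution for the monoidal groupoid $\Sigma$ and the extra injections account for exactly the coequalizer defining $\wedge = \odot_{\Sym(e)}$.

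The main obstacle is the careful treatment of the factorization $f = \tau\circ\iota_{n-p}$: the permutation $\tau$ is not unique (it is only well-defined modulo the $\Sigma_{n-p}$ acting on the complement of the image of $\iota_{n-p}$), so one must check that $Y(f) = \tau_*\circ(\text{iterated structure})$ does not depend on the choice — and this is precisely where the $\Sigma_n\times\Sigma_p$-equivariance hypothesis on the composites $X(n)\otimes K^{\otimes p}\to X(n+p)$ is used, together with the $\Sigma_{n-p}$-invariance of the iterated structure map coming from $K = e$ (permuting copies of $e$ acts trivially). Functoriality under composition of two injections then reduces to the same bookkeeping. Everything else — bicompleteness, the monoidal comparison — follows formally from the equivalence of categories.
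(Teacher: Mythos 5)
Your proposal is correct and follows essentially the same route as the paper: both construct explicit mutually inverse functors by evaluating levelwise, sending standard inclusions to iterated structure maps, and extending to arbitrary injections via the factorization of a morphism in $\mathcal{I}$ as a permutation composed with a standard inclusion. If anything, you are slightly more careful than the paper's write-up about the non-uniqueness of that factorization, correctly locating the needed well-definedness in the $\Sigma_n\times\Sigma_p$-equivariance of the iterated structure maps combined with the trivial permutation action on tensor powers of $e$.
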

\begin{proof}
Let $X \in \Sp^\Sigma(\mathcal{C}, e)$. Then $X(n) \in
\mathcal{C}^{\Sigma_n}$ and
we have $\Sigma_n$-equivariant maps
$X(n) \cong X(n) \otimes e \ra  X(n+1)$, such that the composite
$$ \sigma_{n,p}\colon X(n) \cong X(n) \otimes e^{\otimes p} \ra X(n+1)
\otimes e^{\otimes p-1} \ra \cdots \ra X(n+p)$$
is $\Sigma_n \times \Sigma_p$-equivariant for all $n, p \geq 0$.

We send $X$ to $\phi(X) \in \mathcal{C}^\mathcal{I}$ with $\phi(X)(\mathbf{n})
= X(n)$. If $i=i_{p,n-p}\in \mathcal{I}(\mathbf{p},\mathbf{n})$ is the
standard inclusion, then we let
$\phi(i)\colon \phi(X)(\mathbf{p}) \ra \phi(X)(\mathbf{n})$ be
  $\sigma_{p,n-p}$. Every morphism $f \in
  \mathcal{I}(\mathbf{p},\mathbf{n})$ can be
written as $\xi \circ i$ where $i$ is the standard inclusion
and $\xi \in \Sigma_n$. For such $\xi$, the map $\phi(\xi)$ is given
by the $\Sigma_n$-action on $X(n) = \phi(X)(\mathbf{n})$.

{If $f = \xi' \circ i$ is another factorization of $f$ into the
standard inclusion followed by a permutation, then $\xi$ and $\xi'$
differ by a permutation $\tau \in \Sigma_n$ which maps all $j$ with $1
\leq j \leq p$ identically, \ie, $\tau$ is of the form $\tau =
\id_\mathbf{p} \oplus \tau'$ with $\tau' \in \Sigma_{n-p}$. As the
structure maps $\sigma_{p,n-p}$ are $\Sigma_p \times
\Sigma_{n-p}$-equivariant, the induced map $\phi(f) = \phi(\xi') \circ
\phi(i)$ agrees with $\phi(\xi) \circ
\phi(i)$.} 

The inverse of $\phi$, $\psi$, sends an $\mathcal{I}$-diagram in
$\mathcal{C}$, $A$,  to the symmetric spectrum $\psi(A)$ whose $n$th
level is $\psi(A)(n) = A(\mathbf{n})$. The $\Sigma_n$-action on $\psi(A)(n)$ is
given by the corresponding morphisms $\Sigma_n \subset
\mathcal{I}(\mathbf{n},\mathbf{n})$ and the structure maps of the spectrum are
defined as
$$\xymatrix@1{{\psi(A)(n) \otimes e^{\otimes p} =  A(\mathbf{n})
    \otimes e^{\otimes p}}
  \ar[r]^(.8)\cong & {A(\mathbf{n})} \ar[rr]^(.3){A(i_{n,p})} & &
  {A(\mathbf{n+p})=\psi(A)(n+p)}}.$$
The functors  $\phi$ and $\psi$ are well-defined and inverse to each
other.
\end{proof}
\begin{lem}
The functors $\phi$ and $\psi$ are strong symmetric monoidal.
\end{lem}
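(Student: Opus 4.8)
The plan is to verify directly that the mutually inverse equivalences $\phi$ and $\psi$ from Proposition~\ref{prop:ci} are compatible with the two symmetric monoidal structures, namely the smash product $\wedge$ on $\Sp^\Sigma(\mathcal{C},e)$ (which, since $\Sym(e)$ is the unit, is just $\odot_{\Sym(e)}$) and the Day-type convolution product $\boxtimes$ on $\mathcal{C}^\mathcal{I}$. Since $\phi$ and $\psi$ are inverse equivalences, it suffices to prove one of them, say $\psi$, is strong symmetric monoidal; the statement for $\phi$ then follows formally. So first I would exhibit a natural isomorphism $\psi(A) \wedge \psi(B) \cong \psi(A \boxtimes B)$ together with the unit isomorphism $\Sym(e) \cong \psi(e_{\mathcal{C}^\mathcal{I}})$, and then check the coherence (associativity, unitality, symmetry) hexagon/pentagon diagrams.

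The key computation is at the level of underlying symmetric sequences. Recall $(\psi(A))(n) = A(\mathbf{n})$ as a $\Sigma_n$-object, so on the $\odot$-level we have
$$ (\psi(A) \odot \psi(B))(n) = \bigsqcup_{p+q=n} \Sigma_n \times_{\Sigma_p \times \Sigma_q} A(\mathbf{p}) \otimes B(\mathbf{q}),$$
using that the monoidal structure on $\mathcal{C}^\Sigma$ restricts from $\mathcal{C}$ — here I should replace $\bigsqcup$/$\times$ by the appropriate coproduct-with-group-action in a general symmetric monoidal $\mathcal{C}$, i.e.\ the coend over $\Sigma_p \times \Sigma_q$. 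On the other side, the colimit $(A \boxtimes B)(\mathbf{n}) = \colim_{\mathbf{p} \sqcup \mathbf{q} \to \mathbf{n}} A(\mathbf{p}) \otimes B(\mathbf{q})$ is taken over the category of injections $\mathbf{p} \sqcup \mathbf{q} \hookrightarrow \mathbf{n}$; since every injection from $\mathbf{p}\sqcup\mathbf{q}$ of total size $n$ into $\mathbf{n}$ is a bijection, this indexing category is (a disjoint union over $p+q=n$ of) a $\Sigma_p \times \Sigma_q$-torsor's worth of bijections, and the colimit identifies with exactly the coend expression above. Coequalizing by $\Sym(e)$ to pass from $\odot$ to $\wedge = \odot_{\Sym(e)}$ imposes no further relations because $\Sym(e)(m) = e^{\otimes m} = e$ is the unit, so $\psi(A) \wedge \psi(B)$ and $\psi(A)\odot\psi(B)$ agree on underlying sequences and hence $(\psi(A)\wedge\psi(B))(n) \cong (A\boxtimes B)(\mathbf n) = \psi(A\boxtimes B)(n)$; one then checks this bijection is compatible with the structure maps of the two spectra, which amounts to tracing through how $A(i_{n,p})$ and the canonical colimit maps interact — routine but the place where care is needed.

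The final step is coherence: one must check the structural isomorphism just constructed is associative, unital and symmetric, i.e.\ that the relevant diagrams commute. I would argue this follows because the identification is built entirely from the colimit/coend universal properties and the coherence already present in $(\mathcal{C},\otimes,e)$ and in the symmetric monoidal structure of $\mathcal{I}$ under disjoint union; naturality of everything in sight reduces each coherence diagram to one that holds in $\mathcal{C}$. The main obstacle I anticipate is purely bookkeeping: making the correspondence between ``injections $\mathbf{p}\sqcup\mathbf{q}\hookrightarrow\mathbf{n}$'' and ``$\Sigma_n\times_{\Sigma_p\times\Sigma_q}(-)$'' precise and equivariant, and confirming that $\Sym(e)$-balancing genuinely adds nothing, so that the $\odot$-level computation transfers to the $\wedge$-level. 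Once that dictionary is fixed, writing $\psi$ (hence $\phi$) as strong symmetric monoidal is immediate.
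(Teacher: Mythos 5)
There is a genuine gap, and it sits exactly at the point you flag as ``routine'': the claim that the $\Sym(e)$-balancing adds nothing, paired with the claim that the Day colimit for $\boxtimes$ reduces to the bijections $\mathbf{p}\sqcup\mathbf{q}\cong\mathbf{n}$. Both claims are false. $\Sym(e)$ has $e$ in \emph{every} level, so it is not the unit of $(\mathcal{C}^\Sigma,\odot)$ (that unit is $e$ concentrated in level $0$); hence $\odot_{\Sym(e)}$ is a nontrivial coequalizer of $X\odot\Sym(e)\odot Y\rightrightarrows X\odot Y$, which identifies classes obtained by pushing forward along the structure maps of $X$ versus those of $Y$. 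Likewise, the indexing category for $(A\boxtimes B)(\mathbf{n})=\colim_{\mathbf{p}\sqcup\mathbf{q}\to\mathbf{n}}A(\mathbf{p})\otimes B(\mathbf{q})$ is the full comma category, containing injections with $p+q<n$ and morphisms that enlarge $\mathbf{p}$ and $\mathbf{q}$; the bijection locus is not cofinal. A minimal counterexample to both claims at once: take $\mathcal{C}$ abelian groups and $A=B$ the constant $\mathcal{I}$-diagram $\Z$. Then $(\psi A\odot\psi B)(1)\cong\Z\oplus\Z$, which is also the colimit over the two bijections $\mathbf{1}\sqcup\mathbf{0}\to\mathbf{1}$, $\mathbf{0}\sqcup\mathbf{1}\to\mathbf{1}$; but $(\psi A\wedge\psi B)(1)\cong\Z\cong(A\boxtimes B)(\mathbf{1})$, because the object $(\mathbf{0},\mathbf{0})$ maps into both bijection components (equivalently, the $\Sym(e)$-coequalizer glues the two summands). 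So your two errors are parallel — the statement you want is still true — but as written you identify the wrong objects on both sides, and the ``routine'' compatibility and coherence checks are then being performed for $\odot$ rather than for $\wedge$ and $\boxtimes$. The actual content of the lemma is precisely the dictionary you skip: the relations imposed by $\odot_{\Sym(e)}$ correspond to the morphisms in the comma category that factor a non-surjective injection through a larger one.

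Your proof could be repaired by carrying out that matching honestly (coequalizer presentation of $\wedge$ versus the full comma-category colimit for $\boxtimes$), but note that the paper takes a different and lighter route which you may prefer: it verifies the comparison only on free objects, using $\phi(F_sC)\cong F_s^{\mathcal{I}}(C)$ together with the standard formulas $F_sC\wedge F_tD\cong F_{s+t}(C\otimes D)$ and $F_s^{\mathcal{I}}(C)\boxtimes F_t^{\mathcal{I}}(D)\cong F_{s+t}^{\mathcal{I}}(C\otimes D)$, and then extends to arbitrary objects by writing them as colimits of free ones, using that $\mathcal{C}$ is closed so both products commute with colimits. One further remark: reducing to ``$\psi$ strong monoidal implies $\phi$ strong monoidal'' is fine since they are inverse equivalences, but the levelwise formula you start from is for $\phi\psi$-type identifications and you should state which direction you are actually computing.
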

\begin{proof}
Consider two free objects $F_sC_*$ and $F_tD_*$ in
$\Sp^\Sigma(\mathcal{C}, e)$ for two chain complexes
$C_*$ and $D_*$. We know {in general \cite[\S 7]{H}} that
\begin{equation} \label{eq:freesmash}
F_sC_* \wedge F_tD_* \cong F_{s+t}(C_* \otimes D_*).
\end{equation}
Note that as an object in $\mathcal{C}^\mathcal{I}$ we have for
$\mathbf{n} \in \mathcal{I}$
$$ \phi(F_sC_*)(\mathbf{n}) = \Z\Sigma_n \otimes_{\Z\Sigma_{n-s}} C_*$$
for $n \geq s$ and zero otherwise. This coincides with the value of the free
$\mathcal{I}$-diagram on $\mathbf{n}$
$$ F_s^\mathcal{I}(C_*)(\mathbf{n}) = \Z\mathcal{I}(\mathbf{s},
\mathbf{n}) \otimes C_*$$
and in fact this yields an isomorphism of functors. Similarly,
$\psi(F_s^\mathcal{I}(C_*)) \cong F_sC_*$.

As the symmetric monoidal product in $\mathcal{C}^\mathcal{I}$ is given by
left Kan extension along the exterior product using the monoidal structure
of $\mathcal{C}$  we get
\begin{equation} \label{eq:freeboxtimes}
F_s^\mathcal{I}(C_*)  \boxtimes F_t^\mathcal{I}(D_*)\cong
F_{s+t}^\mathcal{I}(C_* \otimes D_*).
\end{equation}
From \eqref{eq:freesmash} we obtain that
$$ \psi(F_s^\mathcal{I}(C_*)) \wedge \psi(F_t^\mathcal{I}(D_*)) \cong
\psi(F_{s+t}^\mathcal{I}(C_*\otimes D_*)) \cong
\psi(F_s^\mathcal{I}(C_*) \boxtimes   F_t^\mathcal{I}(D_*))$$
and \eqref{eq:freeboxtimes} yields
$$ \phi(F_sC_*) \boxtimes \phi(F_tD_*) \cong \phi(F_{s+t}(C_*\otimes
D_*)) \cong \phi(F_sC_* \wedge F_tD_*).$$

The used isomorphisms are associative and compatible with the symmetry
isomorphisms. Every object in $\Sp^\Sigma(\mathcal{C}, e)$ and
$\mathcal{C}^\mathcal{I}$ can be written as a colimit of free objects and
as $\mathcal{C}$ is closed, the general case follows from the free case.
\end{proof}


\begin{rem}
{In \cite[3.3.9]{ps2} Pavlov and Scholbach describe explicitly
  (for a well-behaved symmetric monoidal model category $\mathcal{C}$) 
  how the unstable and stable model structures on
  $\Sp^\Sigma(\mathcal{C}, e)$ transfer to $\mathcal{C}^\mathcal{I}$
  under the above mentioned isomorphism of categories.} If $\mathcal{C}$ is
$\Chab$, their assumptions are satisfied. 

Note that the weak equivalences in $\Chab^\mathcal{I}$ have an
explicit description: they are the maps that become weak equivalences
after applying a {corrected homotopy 
colimit \cite[5.1]{dugger.simp}. This is the homotopy colimit of the
diagram where 
  every node is functorially replaced by a cofibrant object
  first.} To see this, consider Dugger's Bousfield localizations of 
diagram categories in \cite[\S 5]{dugger.simp}.
As the cofibrations and the fibrant objects in his model
structure in \cite[5.2]{dugger.simp} agree with ours, an argument due to Joyal,
\cite[E.1.10]{joyal}, ensures that we have the same class of weak
equivalences as well. 
\end{rem}

Taking a {cofibrant} $E_\infty$-operad $\mathcal{O}$ in $\Chab$ then
ensures that $\mathcal{O}$-algebras in $\Sp^\Sigma(\Chab,\Z[0])^s$ and
in  $\Chab^\mathcal{I}$ carry a model category structure such that
the forgetful functor determines fibrations and weak equivalences 

Since tensoring with the unit $\Z[0]$ is isomorphic to the identity,
  we can repeat  
all of the arguments in the previous section with $\Z[1]$ replaced by
$\Z[0]$. Thus  
we also obtain that the model category
$E_\infty\Sp^\Sigma(\Chab,\Z[0])^s$ is Quillen 
equivalent to the model category of $E_\infty$-monoids in $\Chab$.
Summarizing:

\begin{thm}
There is a chain of Quillen equivalences
$$ \xymatrix@1{ {E_\infty\Sp^\Sigma(\Chab,\Z[1])^s}
  \ar@<-0.5ex>[r]_(.7){\ev0} & {E_\infty\Chab} \ar@<-0.5ex>[l]_(.3){F_0}
  \ar@<0.5ex>[r]^(.3){F_0} &  {E_\infty\Sp^\Sigma(\Chab,\Z[0])^s}
  \ar@<0.5ex>[l]^(.7){\ev0}} $$
and the right-most model category is isomorphic to
$E_\infty\Chab^\mathcal{I}$.
\end{thm}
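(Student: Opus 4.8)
The plan is to assemble the chain from the pieces already established earlier in the paper. The middle equivalence $\ev0\colon E_\infty\Sp^\Sigma(\Chab,\Z[1])^s \rightleftarrows E_\infty\Chab \colon F_0$ is exactly Theorem~\ref{thm:Qequeinftychains}, so nothing new is needed there. For the right-hand equivalence $F_0\colon E_\infty\Chab \rightleftarrows E_\infty\Sp^\Sigma(\Chab,\Z[0])^s \colon \ev0$, I would observe, as the paragraph preceding the theorem already indicates, that every argument in Section~\ref{sec:einfty} used only that $(-)\otimes\Z[1]$ is a Quillen autoequivalence of $\Chab$; replacing $\Z[1]$ by $\Z[0]$, where $(-)\otimes\Z[0]$ is literally the identity functor, all the same statements hold verbatim. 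In particular Hovey's comparison result \cite[9.1]{H} still applies, and repeating the proof of Theorem~\ref{thm:Qequeinftychains} (with $\Z[0]$ in place of $\Z[1]$, so that $(F_0X)_n = X\otimes\Z[0]\cong X$ is automatically stably fibrant) gives the Quillen equivalence. So the theorem's chain of Quillen equivalences is immediate.

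For the final sentence, that the right-most model category is isomorphic to $E_\infty\Chab^\mathcal{I}$, I would invoke Proposition~\ref{prop:ci} with $\mathcal{C}=\Chab$ and $e=\Z[0]$ the monoidal unit: this gives an equivalence of categories $\Sp^\Sigma(\Chab,\Z[0])\simeq\Chab^\mathcal{I}$, and by the subsequent Lemma this equivalence is strong symmetric monoidal. A strong symmetric monoidal equivalence of symmetric monoidal categories induces an equivalence on categories of algebras over any operad $\mathcal{O}$; applied to a fixed cofibrant $E_\infty$-operad, it identifies $E_\infty\Sp^\Sigma(\Chab,\Z[0])$ with $E_\infty\Chab^\mathcal{I}$ as categories. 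Finally, the model structures on both sides are the right-induced ones where the forgetful functor to the underlying category detects fibrations and weak equivalences, and the underlying model structure on $\Sp^\Sigma(\Chab,\Z[0])^s$ is transported to $\Chab^\mathcal{I}$ exactly via Pavlov--Scholbach \cite[3.3.9]{ps2} along the same equivalence $\phi,\psi$; hence the two model categories are isomorphic, not merely Quillen equivalent.

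I expect the routine-but-slightly-delicate point to be the last one: checking that the equivalence of categories from Proposition~\ref{prop:ci} is not just an equivalence but genuinely an \emph{isomorphism} of model categories, i.e.\ that the Pavlov--Scholbach transferred model structure on $\Chab^\mathcal{I}$ is the one corresponding under $\phi,\psi$ to the stable (not positive) structure on $\Sp^\Sigma(\Chab,\Z[0])^s$, and that this compatibility is preserved after passing to $E_\infty$-monoids where fibrations and weak equivalences are created by the forgetful functor. This is really a bookkeeping matter, since the existence of the model structure on $E_\infty\Chab^\mathcal{I}$ was already arranged in the paragraph above the theorem by taking an $E_\infty$-operad $\mathcal{O}$ in $\Chab$ and using that $\mathcal{O}$-algebras in $\Sp^\Sigma(\Chab,\Z[0])^s$ and in $\Chab^\mathcal{I}$ both inherit right-induced structures. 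One should also note, as a consistency check, that the stable structure is used rather than the positive one: here the positive and stable structures agree in the relevant sense because $(-)\otimes\Z[0]$ is the identity, so no positivity subtlety arises.
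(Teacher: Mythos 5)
Your proposal is correct and follows essentially the same route as the paper's own (largely implicit) argument: the left equivalence is Theorem~\ref{thm:Qequeinftychains}, the right one is obtained by rerunning Section~\ref{sec:einfty} verbatim with $\Z[0]$ in place of $\Z[1]$ (so $F_0X$ is automatically stably fibrant), and the identification of $E_\infty\Sp^\Sigma(\Chab,\Z[0])^s$ with $E_\infty\Chab^{\mathcal{I}}$ uses Proposition~\ref{prop:ci}, the strong symmetric monoidality of $\phi$ and $\psi$, and the Pavlov--Scholbach transfer \cite[3.3.9]{ps2} of the stable structure, exactly as in the paragraphs preceding the theorem. Only your closing aside is loosely phrased: the positive and stable structures do not ``agree'' for $K=\Z[0]$; the relevant point is merely that the stable structure and the right-induced $E_\infty$-algebra structure (for a cofibrant $E_\infty$-operad) exist here, which the paper arranges, so this does not affect the argument.
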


Last but not least we can connect commutative $\HR$-algebras to commutative
$\mathcal{I}$-chain {complexes}. {The positive stable model
structure on $\Sp^\Sigma(\CH(R),R[0])$ 
satisfies the assumptions of 
{\cite[5.10]{ps1}} 
and hence
commutative monoids and $E_\infty$
monoids in   $\Sp^\Sigma(\CH(R),R[0])^{s,+}$ carry model category
structures and there is a Quillen equivalence between them
\cite[3.4.1,3.4.4]{ps2}. This
yields that the model categories of commutative $\mathcal{I}$-chain
{complexes}, 
$C(\CH(R)^{\mathcal{I},+})$,
and $E_\infty$ $\mathcal{I}$-chain {complexes},
$E_\infty(\CH(R)^{\mathcal{I},+})$ 
are Quillen equivalent, if we take the model structure that is
right induced from the positive model structure on
$\CH(R)^{\mathcal{I},+}$. } 
\begin{thm}\label{comm.hz.I.ch}
There is a chain of Quillen equivalences between the model categories of
commutative $\HR$-algebra spectra, $C(\HR\text{-mod})$, and
commutative monoids in
the category $\CH(R)^\mathcal{I}$ where the latter carries the
right-induced model structure
from the positive model structure on $\CH(R)^\mathcal{I}$,
$\CH(R)^{\mathcal{I},+}$.
\end{thm}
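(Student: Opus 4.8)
The plan is to splice together the Quillen equivalences already obtained and then to apply the Pavlov--Scholbach rigidification theorem in order to pass from $E_\infty$-monoids to strictly commutative monoids. First, I invoke Corollary~\ref{cor:sum}, which provides a chain of Quillen equivalences between $C(HR\text{-mod})$ and $E_\infty\CH(R)$, the $E_\infty$-monoids in unbounded $R$-chain complexes; over $R$ this chain runs through $C(\Sp^\Sigma(\sab))$, $C(\Sp^\Sigma(\chab))$, $C(\Sp^\Sigma(\CH))$, $E_\infty(\Sp^\Sigma(\CH))$ and $E_\infty\CH(R)$, just as in the diagram in the introduction. Next, the $\Z[0]$-analogue of Theorem~\ref{thm:Qequeinftychains}, recorded as the theorem just above (and valid because $(-)\otimes\Z[0]$ is isomorphic to the identity), gives a Quillen equivalence $(F_0,\ev0)$ between $E_\infty\CH(R)$ and $E_\infty\Sp^\Sigma(\CH(R),\Z[0])^s$; and Proposition~\ref{prop:ci}, together with the strong symmetric monoidal equivalence $(\phi,\psi)$, identifies $\Sp^\Sigma(\CH(R),\Z[0])$ with $\CH(R)^\mathcal{I}$, hence $E_\infty\Sp^\Sigma(\CH(R),\Z[0])^s$ with the $E_\infty$-monoids in $\CH(R)^\mathcal{I}$ carrying the model structure transferred from the stable model structure on $\Sp^\Sigma(\CH(R),\Z[0])$. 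None of these steps used any special property of $\Z$, so they all persist for an arbitrary commutative ring $R$.

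It then remains to replace the stable structure by the positive stable one and to rigidify. The $\Z[0]$-analogue of Proposition~\ref{prop:positivevsordinary} shows that the identity functors form a Quillen equivalence between $E_\infty\Sp^\Sigma(\CH(R),\Z[0])^{s}$ and $E_\infty\Sp^\Sigma(\CH(R),\Z[0])^{s,+}$. Since $\CH(R)$ is a tractable, pretty small, left proper, h-monoidal, flat symmetric monoidal model category (recorded in Section~\ref{sec:cm} for $\Chab$, the $R$-linear case being identical), the Pavlov--Scholbach rigidification theorem~\cite[3.4.4]{ps2} applies to $\Sp^\Sigma(\CH(R),\Z[0])^{s,+}$ and yields a Quillen equivalence between $E_\infty\Sp^\Sigma(\CH(R),\Z[0])^{s,+}$ and the commutative monoids $C(\Sp^\Sigma(\CH(R),\Z[0])^{s,+})$. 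Finally, transporting along the equivalence of Proposition~\ref{prop:ci} and using the Pavlov--Scholbach transfer~\cite[3.3.9]{ps2} of the positive stable model structure to $\CH(R)^\mathcal{I}$, this last model category is $C(\CH(R)^{\mathcal{I},+})$. Concatenating all of the above produces the asserted chain of Quillen equivalences.

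The main technical obstacle is the verification that $\Sp^\Sigma(\CH(R),\Z[0])^{s,+}$, with $K=\Z[0]$ the monoidal unit, satisfies the structural hypotheses of~\cite[3.4.1, 3.4.4]{ps2} that are needed for the rigidification result to be genuinely applicable here, and, correspondingly, that the model structure transferred to $\CH(R)^\mathcal{I}$ by~\cite[3.3.9]{ps2} is exactly the one, $\CH(R)^{\mathcal{I},+}$, named in the statement. The remaining points are routine bookkeeping: the $\Z[0]$-analogues of Theorem~\ref{thm:Qequeinftychains} and Proposition~\ref{prop:positivevsordinary} go through verbatim because taking $K=\Z[0]$ makes the stabilization functor the identity, and the equivalence $\phi$ of Proposition~\ref{prop:ci}, being strong symmetric monoidal, induces isomorphisms of categories of $E_\infty$-monoids and of commutative monoids that are compatible with the transferred model structures.
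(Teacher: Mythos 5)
Your proposal is correct and takes essentially the same route as the paper: its proof of Theorem~\ref{comm.hz.I.ch} likewise concatenates the previously established chain ending in $E_\infty$-monoids in unbounded $R$-chain complexes with the $\Z[0]$-variants of the comparison results (identifying $\Sp^\Sigma(\CH(R),R[0])$ with $\CH(R)^{\mathcal{I}}$ via Proposition~\ref{prop:ci}), and then applies the Pavlov--Scholbach results \cite[3.4.1, 3.4.4]{ps2} to the positive stable structure on $\Sp^\Sigma(\CH(R),R[0])$ to rigidify $E_\infty(\CH(R)^{\mathcal{I},+})$ to $C(\CH(R)^{\mathcal{I},+})$.
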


We close with an {important} example of a commutative
$\mathcal{I}$-chain complex.   Consider a chain complex $C_*$ together with a
$0$-cycle, \ie,  
with a map $\eta\colon \Z[0] \ra C_*$. The assignment 
$ \mathbf{n} \mapsto C_*^{\otimes n}$ 
defines a functor $\mathsf{sym}$ from $\mathcal{I}$ to the category of
unbounded chain complexes {(namely $\Sym(C_*)$). 
Schlichtkrull shows in  \cite{schl} that $\mathsf{sym}$  
is the algebraic analogue of the symmetric product in the category of spaces. }

\end{document}